\numberwithin{equation}{section}
\newtheorem*{theorem*}{Theorem}
\newtheorem{theorem}{Theorem}[section]
\newtheorem{lemma}[theorem]{Lemma}
\newtheorem{proposition}[theorem]{Proposition}
\newtheorem{corollary}[theorem]{Corollary}
\newtheorem{question}[theorem]{Question}
\newtheorem{conjecture}[theorem]{Conjecture}
\newtheorem{step}{Step}
\theoremstyle{definition}
\newtheorem{definition}[theorem]{Definition}
\newtheorem{example}[theorem]{Example}
\theoremstyle{remark}
\newtheorem*{remark}{Remark}
\newtheorem{case}{Case}
\newtheorem{claim}{Claim}
\newtheorem*{ack}{Acknowledgments}
\newenvironment{hproof}{%
  \proof}{\endproof}
\DeclareMathOperator{\reg} {reg}
\DeclareMathOperator{\Sym}{Sym}
\DeclareMathOperator{\Eff}{Eff}
\DeclareMathOperator{\Supp}{Supp}
\DeclareMathOperator{\Big1}{Big}
\DeclareMathOperator{\N}{N}
\DeclareMathOperator{\Bl}{Bl}
\DeclareMathOperator{\Mov}{Mov}
\DeclareMathOperator{\Spec}{Spec}
\DeclareMathOperator{\WMov}{WMov}
\DeclareMathOperator{\Nef}{Nef}
\DeclareMathOperator{\cd}{cd}
\DeclareMathOperator{\rk}{rk}
\DeclareMathOperator{\Z}{Z}
\DeclareMathOperator{\Pic}{Pic}
\title[On nef subvarieties]{On nef subvarieties}
\author{Chung Ching Lau}
\date{}
\address{Department of Mathematics, University of Utah, Salt Lake City, UT 84112, USA}
\email{{\tt lau@math.utah.edu}}
\thanks{Research partially suppported by NSF FRG grant DMS-1265285}
\subjclass[2010]{Primary 14C25; Secondary 14C17,14C20,14F17}
\keywords{Nef subschemes, ample subschemes, Fujita vanishing theorem, intersection theory, movable cone, partially positive line bundles}
\begin{document}

\maketitle
\begin{abstract}
In this paper, we first study generalizations of Fujita vanishing theorems for $q$-ample divisors.
We then apply them to study positivity of subvarieties with nef normal bundle in the sense of intersection theory.

    After Ottem's work on ample subschemes, we introduce the notion of a nef subscheme, which generalizes the notion of a subvariety with nef normal bundle.
    We show that restriction of a pseudoeffective (resp. big) divisor to a nef subvariety is pseudoeffective (resp. big).
    We also show that ampleness and nefness are transitive properties.
  
    We define the weakly movable cone as the
cone generated by the pushforward of cycle classes of nef subvarieties via
proper surjective maps. This cone contains the movable cone
and shares similar intersection-theoretic properties with it, thanks to the
aforementioned properties of nef subvarieties.
    
    On the other hand, we prove that if $Y\subset X$ is an ample subscheme of codimension $r$ and $D|_Y$ is $q$-ample, then $D$ is $(q+r)$-ample. This is analogous to a result proved by Demailly-Peternell-Schneider and K\"uronya.
\end{abstract}

\section{Introduction}

The concept of ampleness of a divisor is central in the subject of algebraic geometry,
connecting vanishing of cohomology groups with geometry.
Weakening the Serre vanishing condition, a line bundle $\mathscr{L}$ is defined to be \textit{$q$-ample} ($q\in\mathbb{N}$) if given any coherent sheaf $\mathscr{F}$, there is a positive integer $m_0$ such that
\[
H^{i}(X,\mathscr{F}\otimes\mathscr{L}^{\otimes m})=0
\]
for $i>q$ and $m>m_0$.
Here we assume $X$ is projective over a field of characteristic zero.
After the works of Andreotti-Grauert \cite{Andreotti}, Sommese \cite{Sommese} and Demailly-Peternell-Schneider \cite{DPS} on $q$-ample divisors, Totaro established the fundamental properties of $q$-ample divisors \cite{Totaro}.
There is another approach to partial ampleness of a line bundle (\cite{dFKL}, \cite{Kuronya1}) that we don't pursue here.

In this paper, we first prove two generalized versions of Fujita vanishing theorem for $q$-ample divisors (Theorem \ref{theorem: fujita} and Proposition \ref{lemma: uniform2}), improving one of the main results in K\"uronya's paper \cite[Theorem C]{Kuronya}.
One of these generalizations (Theorem \ref{theorem: fujita}) can actually be deduced directly from Keeler's generalization of Fujita vanishing theorem \cite[Theorem 4.5 and Theorem 5.5]{Keeler}, combined with Totaro's results on $q$-ample divisors\footnote{We thank one of the referees for pointing this out.}.
Keeler's results are more general.
He took care of the characteristic $p$ case and considered higher rank bundles.
For brevity, we only state a simplified
version of Theorem \ref{theorem: fujita} here.

\begin{theorem}
Let $X$ be a projective scheme of dimension $n$.
Let $\mathscr{L}$ be a $q$-ample line bundle on $X$ and $\mathscr{F}$ be a coherent sheaf on $X$. 
Then there is a positive integer $M$, depending only on $\mathscr{L}$ and $\mathscr{F}$,
such that
\[
H^{i}(X,\mathscr{F}\otimes\mathscr{L}^{\otimes m}\otimes\mathscr{P})=0
\]
for $i>q$, $m\geq M$ and any nef line bundle $\mathscr{P}$ on $Z$.
\end{theorem}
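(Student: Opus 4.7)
The plan is to combine a resolution-of-the-diagonal construction of Arapura--Totaro type with the definition of $q$-ampleness of $\mathscr{L}$, then use ordinary Fujita vanishing for an auxiliary very ample line bundle to absorb the nef twist $\mathscr{P}$ uniformly. First, I would fix a very ample $\mathcal{O}_X(1)$ giving an embedding $X \hookrightarrow \mathbb{P}^N$, and take a bounded complex $K^\bullet$ of $\mathcal{O}_{X\times X}$-modules quasi-isomorphic to $\mathcal{O}_{\Delta_X}$, whose terms are finite direct sums of the form $p_1^*\mathcal{O}_X(-a_{s,l}) \otimes p_2^*\mathcal{E}_{s,l}$, with the twists $a_{s,l}$ bounded by $n$ and the $\mathcal{E}_{s,l}$ drawn from a finite list of fixed coherent sheaves on $X$.

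The key maneuver is to twist the resolution by the line bundle $p_1^*\mathcal{O}_X(-k) \otimes p_2^*\mathcal{O}_X(k)$ for some large integer $k$. Because this line bundle is trivial on $\Delta_X$, the twisted complex still resolves $\mathcal{O}_{\Delta_X}$, but now each $p_2$-side summand is $\mathcal{E}_{s,l}(k)$. For $k$ sufficiently large, ordinary Fujita vanishing for the ample $\mathcal{O}_X(1)$ (applied to the finite family $\{\mathcal{E}_{s,l}\}$) gives
\[
H^{j'}\bigl(X, \mathcal{E}_{s,l}(k) \otimes \mathscr{P}\bigr) = 0 \quad\text{for all } j' > 0 \text{ and every nef } \mathscr{P};
\]
one single choice of $k$ thus regularises all the $p_2$-side terms \emph{uniformly} in the nef line bundle $\mathscr{P}$.

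Writing $\mathscr{F} \otimes \mathscr{L}^{\otimes m} \otimes \mathscr{P} = \Delta^*\bigl(p_1^*(\mathscr{F}\otimes\mathscr{L}^{\otimes m}) \otimes p_2^*\mathscr{P}\bigr)$ and replacing $\mathcal{O}_{\Delta_X}$ by the twisted resolution, the hypercohomology spectral sequence on $X\times X$ together with the K\"unneth formula decomposes its $E_1$-terms as finite sums
\[
H^j\bigl(X, \mathscr{F}\otimes\mathscr{L}^{\otimes m}\otimes\mathcal{O}_X(-a_{s,l}-k)\bigr) \otimes H^{j'}\bigl(X, \mathcal{E}_{s,l}(k)\otimes\mathscr{P}\bigr),
\]
abutting to $H^{t-s}(X, \mathscr{F}\otimes\mathscr{L}^{\otimes m}\otimes\mathscr{P})$ with $t = j + j'$. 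One then invokes $q$-ampleness of $\mathscr{L}$ on the (finite) family $\{\mathscr{F}\otimes\mathcal{O}_X(-a_{s,l}-k)\}$ to produce a single threshold $M$ beyond which the first factor vanishes whenever $j > q$ and $m \geq M$. A surviving summand must therefore have $j \leq q$ and, by the previous paragraph, $j' = 0$; so its contribution has $t - s \leq q$. Consequently every $E_1$-term contributing to $H^i$ with $i > q$ vanishes, whence $H^i(X, \mathscr{F}\otimes\mathscr{L}^{\otimes m}\otimes\mathscr{P}) = 0$ for $i > q$ and $m \geq M$, uniformly in nef $\mathscr{P}$.

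The main obstacle is the first step: securing a resolution of the diagonal with terms decomposed as pullback tensor products drawn from a finite list, in the generality of a (possibly singular) projective scheme $X$. Once this is granted, the auxiliary twist $p_1^*\mathcal{O}_X(-k)\otimes p_2^*\mathcal{O}_X(k)$ is the conceptual bridge that converts the $q$-ample-plus-nef problem into two independent uniform-vanishing problems --- $q$-ample vanishing for $\mathscr{L}$ on the $p_1$-factor, and ordinary Fujita vanishing for the ample $\mathcal{O}_X(1)$ absorbing the nef $\mathscr{P}$ on the $p_2$-factor --- each handled uniformly because only finitely many coherent sheaves are involved on each side.
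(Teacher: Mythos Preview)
Your approach is correct and rests on the same essential ingredients as the paper---Totaro's resolution of the diagonal, ordinary Fujita vanishing to absorb the nef twist, and the definition of $q$-ampleness---but the packaging is genuinely different. The paper first develops a \emph{partial regularity} formalism: it proves subadditivity $\reg^q(\mathscr{E}\otimes\mathscr{F}) \leq \reg^l(\mathscr{E}) + \reg^{q-l}(\mathscr{F})$ (Theorem~\ref{theorem: subadditivity}) and a uniform vanishing theorem for $q$-ample $\mathscr{L}$ against all sheaves of bounded $q'$-regularity (Theorem~\ref{theorem: unif}); Fujita then bounds $\reg^0(\mathscr{P})$ uniformly over nef $\mathscr{P}$ (Lemma~\ref{lemma: reg of nef}), subadditivity bounds $\reg^0(\mathscr{F}\otimes\mathscr{P})$, and uniform vanishing concludes. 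In that argument $\mathscr{L}^{\otimes m}$ sits with the vector bundles $\mathscr{R}_a$ of the diagonal resolution while $\mathscr{F}\otimes\mathscr{P}$ sits with the twists $\mathscr{O}(-a)$. Your decomposition swaps the roles: $\mathscr{P}$ goes with the $\mathscr{R}_a$'s and $\mathscr{F}\otimes\mathscr{L}^{\otimes m}$ with the $\mathscr{O}(-a)$'s, and the twist by $p_1^*\mathscr{O}(-k)\otimes p_2^*\mathscr{O}(k)$ is precisely what lets Fujita kill the $\mathscr{P}$-side higher cohomology directly, without passing through Lemma~\ref{lemma: R_i} or subadditivity. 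This gives a shorter argument for the stated theorem; the paper's route, in exchange, builds reusable machinery that also handles the multi-line-bundle version (Theorem~\ref{theorem: fujita}).

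Two small fixes: the twists in Totaro's resolution run over $0\leq a\leq 2n-1$, not up to $n$, and the complex is not a bounded resolution but a $2n$-term exact sequence whose leftover kernel is harmless only because its $E_1$-contribution lands in cohomological degree exceeding $2n=\dim(X\times X)$. Also, Totaro's resolution requires $\mathscr{O}(X)$ to be a field and $\mathscr{O}_X(1)$ to be $2n$-Koszul-ample; the paper reduces to this via a nilradical filtration on $\mathscr{F}$, and you should add this reduction step.
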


Our second version of Fujita vanishing theorem focuses only on the vanishing of the top cohomology (Proposition \ref{lemma: uniform2}).
In the latter sections, we apply these results to
study positive subvarieties of X.

After the extensive work of Hartshorne \cite{Hartshorne}, where he studied positivity properties of higher codimension subvarieties,
Ottem introduced a
better behaved notion of ampleness for not necessarily l.c.i. subschemes \cite{Ottem}.
He defined a subscheme $Y$ of codimension $r$ of a projective scheme $X$ to be \textit{ample} if the exceptional divisor in the blowup of $X$ along $Y$ is $(r-1)$-ample. 
It is a natural definition that generalizes many properties of ample divisors
\cite[Corollary 5.6]{Ottem}, which were predicted in Hartshorne's work,
while at the same time includes the zero locus of a global section of an ample vector bundle \cite[Proposition 4.5]{Ottem}.
We would also like to note that
Halic studied the notion of partial ampleness of subschemes in \cite{Hal1}, \cite{Hal2}.
He showed that partially ample subvarieties satisfied the G3-property and proved a Fulton-Hansen-type connectedness theorem in the setting of partially ample subvarieties.

Our next result sheds more light on the connection between $q$-ample divisors and ample subschemes:
\begin{theorem}\label{thm: 1}
Let $X$ be a projective scheme of dimension $n$ and $Y$ be an ample subscheme of $X$ of codimension $r$.
Suppose $\mathscr{L}$ is a line bundle on $X$, and that its restriction $\mathscr{L}|_{Y}$ to $Y$ is $q$-ample.
Then $\mathscr{L}$ is $(q+r)$-ample.
\end{theorem}
This result can be compared to a result by Demailly-Peternell-Schneider \cite[Theorem 3.4 and Proposition 1.2]{DPS}.
Given
a chain of codimension $1$ subvarieties
$Y_{n-r}\subset Y_{n-r+1}\subset  \cdots \subset Y_{n-1}\subset Y_n = X$,
such that for $n-r\leq i\leq n-1$, there exists an ample divisor $Z_i$ in the normalization of $Y_{i+1}$,
with $Y_{i}$ being the image of $Z_i$ under the normalization map.
They showed that if $\mathscr{L}|_{Y_{n-r}}$ is ample, then $\mathscr{L}$ is $r$-ample.
A predecessor of Theorem \ref{thm: 1} was also proven by K\"uronya in \cite[Theorem A]{Kuronya}, where he considered the case where $Y$ is a complete intersection of ample divisors.

We now move on to study a weaker positivity condition of a subscheme.
Given an l.c.i. subvariety $Y\subset X$ with nef normal bundle, we would like to understand its positivity properties in terms of intersection theory.
Fulton and Lazarsfeld \cite{FulLaz} gave an answer to this:
They showed that 
given any subvariety $Z\subset X$ such that $\dim Y +\dim Z\geq \dim X$, 
\[
\deg_H(Y\cdot Z)\geq0.
\]
Here $H$ is an ample divisor.

Now let $Y\subset X$ be an arbitrary subscheme of codimension $r$ and $E$ be the exceptional divisor in $\Bl_Y X$.
We say that $Y$ is \textit{nef}
if 
$(E+\epsilon A)|_E$ is $(r-1)$-ample,
where is $A$ is an ample divisor and $0<\epsilon \ll 1$.
This definition is inspired by Ottem's definition of an ample subscheme \cite{Ottem}.
If $Y$ is l.c.i. in $X$, $Y$ is nef if and only if $Y$ has nef 
normal bundle.
We show that 
\begin{theorem}\label{thm: 2}
Let $\iota: Y\hookrightarrow X$ be a nef subvariety of codimension $r$ of a projective variety $X$. 
Then the natural map $\iota^{*}: \N^{1}(X)_{\mathbf{R}} \rightarrow \N^{1}(Y)_{\mathbf{R}}$ induces
$\iota^{*}: \overline{\Eff}^{1}(X)\rightarrow \overline{\Eff}^{1}(Y)$ and
$\iota^{*}: \Big1(X)\rightarrow \Big1(Y)$.
\end{theorem}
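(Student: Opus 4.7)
The plan is to deduce both parts of the theorem from the single bigness statement: if $D$ is big on $X$, then $\iota^*D$ is big on $Y$. The pseudoeffective part then follows by a continuity argument, since $\iota^*$ is a linear map of finite-dimensional $\mathbf{R}$-vector spaces and $\overline{\Eff}^1$ is the closure of the big cone: given $D \in \overline{\Eff}^1(X)$ and any ample $A$ on $X$, each $D + \tfrac{1}{k} A$ is big, so $D|_Y + \tfrac{1}{k} A|_Y$ is big and hence pseudoeffective on $Y$, and taking $k \to \infty$ places $D|_Y$ in $\overline{\Eff}^1(Y)$.

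To prove the bigness part, I would pass to the blowup $\pi: \tilde X = \Bl_Y X \to X$ with exceptional divisor $E$, set $\pi|_E: E \to Y$, and write $\xi := -E|_E$ for the $\pi|_E$-relatively ample tautological class. Since $\pi$ is birational, $\pi^*D$ is big on $\tilde X$, and the openness of the big cone in $\N^1(\tilde X)_{\mathbf{R}}$ gives that $\pi^*D - cE$ is also big for all sufficiently small rational $c > 0$. I would then telescope the standard restriction short exact sequences
\[
0 \to \mathscr{O}_{\tilde X}(\pi^*(mD) - (k+1) E) \to \mathscr{O}_{\tilde X}(\pi^*(mD) - k E) \to \mathscr{O}_E(\pi|_E^*(mD|_Y) + k \xi) \to 0,
\]
for $k = 0, \ldots, \lceil cm \rceil - 1$, and compare the $m^n$-growth of $h^0(X, mD)$ with the strictly smaller but still $m^n$-order growth of $h^0(X, mD \otimes \mathscr{I}_Y^{\lceil cm \rceil}) = h^0(\tilde X, \pi^*(mD) - \lceil cm \rceil E)$. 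This produces a lower bound of the correct asymptotic order for the total sum $\sum_k h^0(E, \pi|_E^*(mD|_Y) + k\xi)$, and by pigeonhole some summand must grow fast enough in $m$.

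To convert this into bigness of $D|_Y$ on $Y$, I would apply the generalized Fujita vanishing of Theorem~\ref{theorem: fujita} to the $(r-1)$-ample line bundle $(E + \epsilon A)|_E$ on $E$: it gives $H^i(E, -) = 0$ for $i > r - 1$ after twisting by powers of $(E + \epsilon A)|_E$ and an arbitrary nef line bundle, so the $h^0$'s on $E$ agree with Euler characteristics up to lower-order terms. Using the projective-bundle structure of $\pi|_E$ (so that $(\pi|_E)_* \mathscr{O}_E(k \xi) \cong \Sym^k N_{Y/X}$ in the lci case, with an analogous description in general), the Euler characteristic $\chi(E, \pi|_E^*(mD|_Y) + k \xi)$ unfolds into a polynomial in $(m, k)$ whose leading term is a positive combination involving $(D|_Y)^{n-r}$ and Segre classes of $N_{Y/X}$. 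Matching against the telescoped lower bound then forces the top self-intersection $(D|_Y)^{n-r}$, and in fact the volume of $D|_Y$, to be strictly positive, yielding $h^0(Y, mD|_Y) \gtrsim m^{n-r}$.

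The main obstacle is this last bookkeeping step: transferring the asymptotic lower bound for $h^0$ on the $(n-1)$-dimensional $E$ back to one for $h^0(Y, mD|_Y)$ on the $(n-r)$-dimensional $Y$. The fiber directions of $\pi|_E$ contribute polynomial factors in $k$ that can mask the genuine asymptotics on $Y$, and it is precisely the nef hypothesis on $Y$ --- through the strengthened Fujita vanishing of Theorem~\ref{theorem: fujita}, which applies uniformly with arbitrary nef twists --- that controls the higher cohomologies on $E$ tightly enough for the Hilbert-polynomial identification to deliver the desired lower bound on $Y$.
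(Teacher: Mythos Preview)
Your reduction of the pseudoeffective statement to the big statement is fine, but the big-case argument has real gaps that I do not see how to close along the lines you sketch.

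First, the Fujita-type vanishing of Theorem~\ref{theorem: fujita} goes in the wrong direction for your telescoping. The nefness hypothesis on $Y$ says that $E|_E$ (equivalently $(E+\epsilon A)|_E$) is $(r-1)$-almost ample, so the theorem gives vanishing after twisting by \emph{large positive} multiples of $E|_E$. Your short exact sequences, however, produce the line bundles $\pi|_E^{*}(mD|_Y)+k\xi$ with $\xi=-E|_E$, i.e.\ large \emph{negative} multiples of $E|_E$. There is no evident way to rewrite $\pi|_E^{*}(mD|_Y)-kE|_E$ as a fixed sheaf tensored with a high power of an $(r-1)$-ample bundle and a nef bundle, so Theorem~\ref{theorem: fujita} does not apply to these groups.

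Second, even if it did, you would only kill $H^{i}$ for $i>r-1$. For $r\geq 2$ the groups $H^{1},\dots,H^{r-1}$ remain uncontrolled, so you cannot replace $h^{0}$ by $\chi$ up to lower order. And third, the leading term of $\chi(E,\pi|_E^{*}(mD|_Y)+k\xi)$ is built from intersection numbers such as $(D|_Y)^{n-r}$ and Segre classes, not from $\mathrm{vol}(D|_Y)$; a positive top self-intersection does not imply bigness in general, so even a successful Hilbert-polynomial identification would not finish the argument. (Incidentally, $(\pi|_E)_{*}\mathscr{O}_E(k\xi)\cong\Sym^{k}N_{Y/X}^{\vee}$, not $\Sym^{k}N_{Y/X}$.)

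The paper avoids all of this by arguing contrapositively through Theorem~\ref{theorem: n-1 ample}: one shows that if $\mathscr{L}|_Y$ is $(n-r-1)$-ample then $\mathscr{L}$ is $(n-1)$-ample, following the same blowup/exceptional-divisor reduction as in Theorem~\ref{theorem: restriction}. After replacing $\mathscr{L}$ by a power so that $\pi^{*}\mathscr{L}\otimes\mathscr{O}_E(-E)$ is $(n-r-1)$-ample (Proposition~\ref{prop: pullback}), the required vanishing on $E$ is exactly what Proposition~\ref{lemma: uniform2} delivers: it handles only the top cohomology, and allows one genuinely $q$-ample factor together with $q$-\emph{almost} ample factors, which is precisely the mix $(n{-}r{-}1)$-ample plus $(r{-}1)$-almost ample that the nef hypothesis provides. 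The big case then follows from the pseudoeffective case by writing big as ample plus pseudoeffective.
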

When $Y$ is a curve with nef normal sheaf, this is a result of Demailly-Peternell-Schneider \cite[Theorem 4.1]{DPS}.
We also show that nefness and ampleness are transitive properties without any assumptions on singularities, thus generalizes Ottem's result \cite[Proposition 6.4]{Ottem}.
\begin{theorem}\label{thm: 3}
Let $X$ be a projective scheme of dimension $n$.
If $Y$ is an ample (resp. nef) subscheme of $X$ and $Z$ is an ample (resp. nef) subscheme of $Y$, then $Z$ is ample (resp. nef) in $X$.
\end{theorem}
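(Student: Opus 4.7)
I treat the ample case in detail; the nef case will follow the same template with each occurrence of ``$q$-ampleness of a divisor $D$'' replaced by ``$q$-ampleness of $(D+\varepsilon A)|_D$'' for a small $\varepsilon>0$ and a fixed ample $A$. Let $\sigma\colon \tilde X := \Bl_Z X \to X$ be the blowup along $Z$, with exceptional divisor $E$, and let $\tilde Y \subset \tilde X$ be the strict transform of $Y$. Since $Z \subsetneq Y$, the restriction $\sigma|_{\tilde Y}$ is canonically identified with the blowup $\Bl_Z Y \to Y$, so that $E|_{\tilde Y}$ is the exceptional divisor of this latter blowup; by the hypothesis that $Z$ is ample in $Y$ of codimension $s$, $E|_{\tilde Y}$ is $(s-1)$-ample on $\tilde Y$. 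The plan is then to apply Theorem \ref{thm: 1} with $(X,Y,\mathscr{L},q,r)$ replaced by $(\tilde X, \tilde Y, \mathcal{O}_{\tilde X}(E), s-1, r)$; the output is that $E$ is $(r+s-1)$-ample on $\tilde X$, which is precisely the statement that $Z$ is ample in $X$ of codimension $r+s$.

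To make this application legitimate one needs the following key intermediate statement: \emph{$\tilde Y$ is an ample subscheme of $\tilde X$ of codimension $r$.} The codimension is immediate because $\sigma$ is an isomorphism away from the codimension-$(r+s)$ locus $Z$. For the ampleness I would introduce the common blowup $\mu\colon \bar X \to X$ of the product ideal $I_Y \cdot I_Z \subset \mathcal{O}_X$, which dominates both $\sigma$ and $\pi_Y\colon W := \Bl_Y X \to X$ through proper birational maps $\tau\colon \bar X \to \tilde X$ and $\rho\colon \bar X \to W$; a standard universal-property argument identifies $\tau$ with the blowup of $\tilde X$ along $\tilde Y$, whose exceptional divisor $\bar E_{\tilde Y}$ is what must be shown to be $(r-1)$-ample. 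Since $E_Y$, the exceptional divisor of $\pi_Y$, is $(r-1)$-ample on $W$ (this is exactly the hypothesis that $Y$ is ample in $X$), invariance of $q$-ampleness under proper surjective pullback \cite{Totaro} gives that $\rho^* E_Y$ is $(r-1)$-ample on $\bar X$. Writing $\rho^* E_Y = \bar E_{\tilde Y} + \bar F$ with $\bar F$ an effective $\rho$-exceptional divisor, the task reduces to subtracting $\bar F$ while retaining $(r-1)$-ampleness. I plan to carry this out by exploiting that $\bar F$ is itself the exceptional divisor of the blowup of $W$ along $\pi_Y^{-1}(Z)$, which inherits the projective-bundle structure of $\pi_Y|_{E_Y}\colon E_Y \to Y$, combined with the ampleness of $Z$ in $Y$, via a relative argument along the fibres of $\rho|_{\bar F}$.

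\textbf{Main obstacle.} The crux of the argument is this last subtraction step: $q$-ampleness is not preserved under subtracting an effective divisor in general, so a formal manipulation will not suffice, and one must exploit the specific description of $\bar F$ as the exceptional divisor of a blowup whose centre's positivity is governed by $Z$ being ample in $Y$. For the nef version the same template applies, with the additional bookkeeping of selecting a single small $\varepsilon > 0$ that works uniformly on $\tilde X$, $W$, and $\bar X$; this should be possible thanks to the openness of $q$-ampleness in the numerical class \cite{Totaro}.
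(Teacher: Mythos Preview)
Your strategy---reduce transitivity to Theorem~\ref{thm: 1} by first showing that the strict transform $\tilde Y$ is ample in $\tilde X=\Bl_Z X$---is conceptually different from the paper's proof, which works directly with cohomology vanishing on $\Bl_{\mathscr{I}_Y\cdot\mathscr{I}_Z}X$ via the Fujita-type theorem (Theorem~\ref{theorem: fujita}) and never establishes ampleness of $\tilde Y$ in $\tilde X$ as an intermediate step. Unfortunately the proposal contains a genuine gap.

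The fatal point is the assertion ``invariance of $q$-ampleness under proper surjective pullback.'' This is false. If $\rho\colon\bar X\to W$ is, say, the blowup of a smooth variety in a smooth center of positive dimension and $E_Y$ is ample on $W$, then $\rho^*E_Y$ is big and nef but not ample; it is not $0$-ample, and more generally $\rho^*E_Y$ fails to be $q$-ample whenever the fibres of $\rho$ have dimension exceeding $q$. What Totaro actually proves (and what appears here as Proposition~\ref{prop: pullback}) is only that $m\rho^*E_Y+A$ is $q$-ample for $A$ $\rho$-ample and $m\gg0$; the relative term cannot be dropped. Without $(r-1)$-ampleness of $\rho^*E_Y$ your argument for $(r-1)$-ampleness of $\bar E_{\tilde Y}$ does not get off the ground, and the subsequent ``subtraction of $\bar F$'' step---which you already flag as the main obstacle and leave unresolved---can only make matters worse, since subtracting an effective divisor can destroy $q$-ampleness.

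There is also a secondary imprecision. The map $\bar X=\Bl_{\mathscr{I}_Y\cdot\mathscr{I}_Z}X\to\tilde X$ is the blowup of $\tilde X$ along the subscheme cut out by $\mathscr{I}_Y\cdot\mathscr{O}_{\tilde X}\otimes\mathscr{O}_{\tilde X}(E'_Z)$ (Lemma~\ref{lemma: trans ample}); in the absence of any lci hypothesis this need not coincide with the strict transform $\tilde Y$, so your ``standard universal-property argument'' does not, as stated, identify the exceptional divisor of $\tau$ with that of $\Bl_{\tilde Y}\tilde X$. The paper sidesteps all of this by never asking whether $\tilde Y$ is ample in $\tilde X$: instead it manufactures two divisors on $\bar X$, namely $kE_Z+(k+1)E_Y$ (which is $(r_1-1)$-ample by Proposition~\ref{prop: pullback}) and $(kE_Z-E_Y)|_{E_Y}$ (which is $(r_2-1)$-ample), and then applies the Fujita-type uniform vanishing (Theorem~\ref{theorem: fujita}) directly to control $H^i$ for $i>r_1+r_2-1$.
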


Denote $\N_d(X)_{\mathbf{R}}$ the $d$-cycle classes with real coefficients modulo numerical equivalence.
We define the weakly movable cone, $\overline{\WMov}_d(X)\subset \N_d(X)_{\mathbf{R}}$, as the closure of the convex cone generated by the pushforward of cycle classes of nef subvarieties of dimension $d$ via proper surjective morphisms.
Using the properties stated above on nef subvarieties,
we show that the weakly movable cone shares similar properties to that of the movable cone of $d$-cycles,
$\overline{\Mov}_d(X)$.
\begin{theorem}
Let $X$ be a projective variety of dimension $n$. For $1\leq d \leq n-1$,
\begin{enumerate}
    \item
    $\overline{\Mov}_d(X)\subseteq\overline{\WMov}_d(X)$ and $\overline{\Mov}_1(X)=\overline{\WMov}_1(X)$.
    \item 
    \label{item: eff1}
    $\overline{\Eff}^{1}(X)\cdot \overline{\WMov}_d(X)\subseteq \overline{\Eff}_{d-1}(X)$.
    \item 
    \label{item: bignef1} 
    Let $H$ be a big Cartier divisor, $\alpha \in \overline{\WMov}_d(X)$.
    If $H\cdot \alpha=0$, then $\alpha =0$. 
    \item 
    \label{item: nef1}
    $\Nef^{1}(X)\cdot \overline{\WMov}_d(X)\subseteq\overline{\WMov}_{d-1}(X)$.
\end{enumerate}
\end{theorem}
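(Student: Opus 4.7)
The plan is to establish the parts in the order \ref{item: eff1}, \ref{item: nef1}, (1), \ref{item: bignef1}, since each subsequent statement builds on earlier ones together with Theorems~\ref{thm: 2} and \ref{thm: 3}. In every case it suffices to verify the statement on a generator $\alpha = f_*[Y]$ of $\WMov_d(X)$, where $f\colon Y' \to X$ is proper surjective and $Y \subset Y'$ is a nef subvariety of dimension $d$, and then extend by linearity, convexity, and continuity.

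For \ref{item: eff1}, given $D \in \overline{\Eff}^1(X)$, the projection formula gives $D \cdot \alpha = f_*(f^*D|_Y)$. Since $f^*$ preserves effective Cartier classes (as $f$ is surjective) and is continuous, $f^*D \in \overline{\Eff}^1(Y')$; Theorem~\ref{thm: 2} then places $f^*D|_Y$ in $\overline{\Eff}^1(Y)$, whose pushforward lies in $\overline{\Eff}_{d-1}(X)$. For \ref{item: nef1}, I would perturb $f^*N$ into the ample setting: fix an ample divisor $H'$ on $Y'$, so that $f^*N + \varepsilon H'$ is ample for every $\varepsilon > 0$, and hence $(f^*N + \varepsilon H')|_Y$ is ample on $Y$. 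For $m \gg 0$, a Bertini-type general member $Z_{\varepsilon,m}$ of the very ample linear system associated with $m(f^*N + \varepsilon H')|_Y$ is an irreducible, ample (hence nef) subvariety of $Y$ of dimension $d-1$. Transitivity of nefness (Theorem~\ref{thm: 3}) applied to $Z_{\varepsilon,m} \subset Y \subset Y'$ makes $Z_{\varepsilon,m}$ nef in $Y'$, so $\tfrac{1}{m}\,f_*[Z_{\varepsilon,m}] \in \WMov_{d-1}(X)$; letting $\varepsilon \to 0$ realizes $N \cdot \alpha = f_*(f^*N|_Y)$ as a limit of such classes.

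Part~(1) splits into two inclusions. The containment $\overline{\Mov}_d(X) \subseteq \overline{\WMov}_d(X)$ follows because the movable cone is generated by pushforwards $g_*(H_1 \cdots H_{n-d})$ with $g\colon X' \to X$ projective birational and $H_i$ very ample on $X'$, and any complete intersection of ample divisors is an ample (hence nef) subscheme by \cite[Proposition~4.5]{Ottem}. For the equality when $d=1$, I invoke the Boucksom--Demailly--P\u aun--Peternell duality $\overline{\Mov}_1(X) = \overline{\Eff}^1(X)^{\vee}$: a generator $f_*[C]$ with $C$ a nef curve in $Y'$ satisfies $D \cdot f_*[C] = \deg_C(f^*D|_C) \geq 0$ for every $D \in \overline{\Eff}^1(X)$ by Theorem~\ref{thm: 2}, placing $f_*[C]$ in $\overline{\Mov}_1(X)$.

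The main obstacle is \ref{item: bignef1}. Writing $H \equiv \varepsilon A + E$ with $A$ an ample $\mathbf{R}$-divisor, $E$ an effective $\mathbf{R}$-divisor, and $\varepsilon > 0$, and intersecting $H \cdot \alpha = 0$ with $A^{d-1}$ yields
\[
\varepsilon\,A^d \cdot \alpha \;=\; -\,A^{d-1} \cdot (E \cdot \alpha).
\]
Part~\ref{item: eff1} places $E \cdot \alpha$ in $\overline{\Eff}_{d-1}(X)$, so the right-hand side is non-positive, whereas the left-hand side is non-negative because $\alpha \in \overline{\WMov}_d(X) \subseteq \overline{\Eff}_d(X)$ and $A$ is ample; hence $A^d \cdot \alpha = 0$. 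The salience of the pseudoeffective cone of $d$-cycles on a projective variety, namely the strict positivity of $A^d(-)$ on $\overline{\Eff}_d(X) \setminus \{0\}$, then forces $\alpha = 0$. The delicate point is precisely this salience, which I would handle either by invoking the standard statement for $\overline{\Eff}_d$ or by a direct generator-and-continuity argument using that the restriction of $A$ to a nef subvariety remains big and nef by Theorem~\ref{thm: 2}, yielding strictly positive top self-intersection on each generator.
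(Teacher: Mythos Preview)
Your arguments for \ref{item: eff1}, \ref{item: bignef1}, and \ref{item: nef1} are correct and track the paper closely. For \ref{item: bignef1} the paper is slightly more direct: since both $A\cdot\alpha$ and $E\cdot\alpha$ lie in $\overline{\Eff}_{d-1}(X)$ by \ref{item: eff1} and sum to zero, salience of that cone already forces $A\cdot\alpha=0$, after which \cite[Corollary 3.16]{FL1} gives $\alpha=0$; your detour through $A^{d-1}$ is unnecessary but harmless. For \ref{item: nef1} the paper reduces to a very ample class on $Y'$ and uses Corollary~\ref{cor: intersection of ample} (the intersection of a general very ample divisor with a nef subvariety is nef in the ambient scheme), which unwinds to exactly your transitivity argument.

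The gap is in part~(1). You assert that $\overline{\Mov}_d(X)$ is generated by birational pushforwards of complete intersections $g_*(H_1\cdots H_{n-d})$, but that is not the definition in play: the paper (following \cite{FL}) defines $\overline{\Mov}_d$ via strictly movable families of cycles, and the equivalence with the complete-intersection description is neither proved here nor obvious for general $d$. The classes $g_*(H_1\cdots H_{n-d})$ certainly lie in $\overline{\Mov}_d$, but that is the wrong inclusion for your purposes. The paper instead argues as follows (Propositions~\ref{prop: irreducible} and~\ref{prop: movable is nef}): one reduces to an irreducible strictly movable family $g\colon U\to W$ with integral general fiber and, after compactifying $W$, observes that a closed point $w\in W$ is a nef subscheme, so its preimage $g^{-1}(w)\subset U$ is nef by Proposition~\ref{prop: pullback of nef} (pullback of a nef subscheme under an equidimensional map is nef). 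Since the second projection $U\to X$ is proper surjective, this exhibits the movable class as a pushforward of a nef subvariety and gives the inclusion directly from the definition. Your BDPP argument for the equality $\overline{\Mov}_1=\overline{\WMov}_1$ is the same as the paper's.
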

Analogous statements of (\ref{item: eff1}), (\ref{item: bignef1}) and (\ref{item: nef1}) hold for the movable cone \cite[Lemma 3.10]{FL}.
One can ask whether in general the two cones
$\overline{\Mov}_d(X)$ and $\overline{\WMov}_d(X)$ are the same.
This is true if and only if the cycle class of any nef subvariety lies in the movable cone.
This question seems closely related to Hartshorne's Conjecture A,
which was disproved by Fulton and Lazarsfeld \cite{FulLaz1}.
For any subvariety with ample normal bundle,
Hartshorne asked if some multiple of the underlying cycle class of the subvariety moves in a large algebraic family.

It is unclear what kind of intersection theoretic statements we should expect if we further assume that $Y$ has ample normal bundle.
Voisin gave an example of a subvariety with ample normal bundle such that its cycle class lies on the boundary of the pseudoeffective cone of cycles \cite{Voisin}.
On the other hand, Ottem showed that the cycle class of a curve with ample normal bundle lies in the interior of the cone of curves \cite{Ottem1}.
In a separate work, we studied the numerical dimension of a pseudoeffective divisor by restricting it to a subvariety with ample normal bundle \cite{Lau}.

It is interesting to note that the cone dual to the pseudoeffective cone of $d$-cycles is not in general pseudoeffective, this is a result by Debarre, Ein, Lazarsfeld and Voisin \cite{DELV}.
Ottem also found an example of a nef non-pseudoeffective
class on some hyperk\"ahler fourfold \cite{Ott15}.

All schemes in this work are over a field $k$ of characteristic $0$.

\begin{ack}
The author would like to thank his advisor, Tommaso de Fernex, for many hours of discussions on this project, as well as his kindness and support. 
He would also like to thank John Christian Ottem and Burt Totaro for their interests in this project.
He would like to sincerely thank both referees for their patience, careful proofreading,
suggestions that sharpen results,
interesting questions and even tips on typesetting. 
This is part of the author's PhD's thesis.
\end{ack}

\section{\texorpdfstring{$q$-}{q-}ample divisors and ample subschemes}
In this section, we shall first gather some useful facts about $q$-ample divisors, then we shall recall Ottem's definition of an ample subscheme and some of its properties.

Let us recall the definition of a $q$-ample line bundle.

\begin{definition}[$q$-ample line bundle {\cite{DPS},\cite{Totaro}}]
Let $X$ be a projective scheme and $q$ be a nonnegative integer. A line bundle $\mathscr{L}$ is \textit{$q$-ample} 
if for any coherent sheaf $\mathscr{F}$ on $X$, there is a positive integer $m_{0}$, depending on $\mathscr{L}$ and $\mathscr{F}$, such that
\[
H^{i}(X,\mathscr{F}\otimes\mathscr{L}^{\otimes m})=0
\]
for $i>q$ and $m>m_0$.
\end{definition}
\begin{lemma}[{\cite[Lemma 2.1]{Ottem}}]\label{lemma: Ottem}
Let $X$ be a projective scheme.
Fix an ample line bundle $\mathscr{O}(1)$ on $X$.
A line bundle $\mathscr{L}$ is $q$-ample if and only if for any $l\geq 0$,
\[
H^{i}\bigl(X,\mathscr{L}^{\otimes m}\otimes \mathscr{O}(-l)\bigr)=0
\]
for $m\gg 0$ and $i>q$.
\end{lemma}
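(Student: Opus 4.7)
The forward direction is immediate from the definition of $q$-ampleness applied to the coherent sheaves $\mathscr{O}(-l)$, so the content is the converse: assuming the cohomological vanishing only for negative twists of the fixed ample $\mathscr{O}(1)$, deduce it for every coherent sheaf $\mathscr{F}$. The plan is to resolve $\mathscr{F}$ by a bounded-length complex whose terms are finite direct sums of line bundles $\mathscr{O}(-l)$, then dimension-shift the desired cohomology vanishing into a range where it is forced by Grothendieck vanishing.

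Concretely, let $n=\dim X$. Given a coherent sheaf $\mathscr{G}$, Serre's theorem gives an integer $l\gg 0$ such that $\mathscr{G}(l)$ is globally generated, so there is a surjection $\mathscr{O}(-l)^{\oplus N}\twoheadrightarrow\mathscr{G}$. Applying this iteratively to $\mathscr{F}_0=\mathscr{F}$ and the successive kernels $\mathscr{F}_{j+1}=\ker(\mathscr{O}(-l_j)^{\oplus N_j}\to\mathscr{F}_j)$, I would obtain an exact sequence
\[
0\to\mathscr{F}_n\to\mathscr{O}(-l_{n-1})^{\oplus N_{n-1}}\to\cdots\to\mathscr{O}(-l_0)^{\oplus N_0}\to\mathscr{F}\to 0,
\]
where $\mathscr{F}_n$ is an arbitrary coherent sheaf (I do not need to control it further, only its cohomological dimension).

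Now tensor with $\mathscr{L}^{\otimes m}$ and break the sequence into short exact pieces. By hypothesis, for each of the finitely many integers $l_0,\dots,l_{n-1}$, there is an $m_j$ such that $H^k(X,\mathscr{L}^{\otimes m}\otimes\mathscr{O}(-l_j))=0$ for $k>q$ and $m\geq m_j$; let $M=\max_j m_j$. For $m\geq M$ and $i>q$, the long exact sequence of $0\to\mathscr{F}_{j+1}\otimes\mathscr{L}^{\otimes m}\to\mathscr{O}(-l_j)^{\oplus N_j}\otimes\mathscr{L}^{\otimes m}\to\mathscr{F}_j\otimes\mathscr{L}^{\otimes m}\to 0$ forces the connecting map
\[
H^{i+j}(X,\mathscr{F}_j\otimes\mathscr{L}^{\otimes m})\xrightarrow{\ \sim\ } H^{i+j+1}(X,\mathscr{F}_{j+1}\otimes\mathscr{L}^{\otimes m})
\]
to be an isomorphism, since the flanking cohomologies of $\mathscr{O}(-l_j)^{\oplus N_j}\otimes\mathscr{L}^{\otimes m}$ sit in degrees $\geq i>q$ and hence vanish. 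Chaining these isomorphisms yields $H^{i}(X,\mathscr{F}\otimes\mathscr{L}^{\otimes m})\cong H^{i+n}(X,\mathscr{F}_n\otimes\mathscr{L}^{\otimes m})$, and the right-hand side vanishes by Grothendieck's theorem since $i+n>n=\dim X$ (which holds because $i>q\geq 0$).

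There is no real obstacle here; the only things to be careful about are that the resolution has length exactly $n$ so that the final dimension-shift lands above $\dim X$, and that $M$ is chosen uniformly for the finitely many values $l_0,\dots,l_{n-1}$ appearing in the resolution. The argument is parallel to the standard syzygy trick used to prove Serre's criterion for ampleness, with Grothendieck vanishing replacing Serre vanishing at the last step.
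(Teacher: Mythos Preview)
Your argument is correct and is exactly the standard syzygy/dimension-shifting proof of this fact. Note that the present paper does not actually supply a proof of this lemma; it is quoted from \cite[Lemma 2.1]{Ottem} and stated here for later use, so there is nothing in the paper to compare your argument against. Your write-up matches the usual proof (and Ottem's): resolve $\mathscr{F}$ by $n=\dim X$ steps of sums of $\mathscr{O}(-l_j)$, use the hypothesis to kill the middle terms in degrees $>q$, and push the cohomology up to degree $>n$ where Grothendieck vanishing finishes it. The two minor points you flag---taking $M=\max_j m_j$ uniformly over the finitely many $l_j$, and making the resolution long enough that $i+n>n$---are precisely the places where care is needed, and you have handled them correctly.
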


We shall start with the definition of a Koszul-ample line bundle. 
The details are not very important in this paper,
but they are included for the sake of completeness.
One useful fact is that any large tensor power of an ample line bundle is $2n$-Koszul-ample, where $n$ is the dimension of the underlying projective scheme \cite{Backelin}.
\begin{definition}[Koszul-ampleness  {\cite[Section 1]{Totaro}}]
Let $X$ be a projective scheme of dimension $n$.
Suppose the ring of global regular function $\mathscr{O}(X)$ on $X$ is a field (e.g. $X$ is connected and reduced).
Given a very ample line bundle $\mathscr{O}_X(1)$, 
we say that it is \textit{$N$-Koszul ample} if the homogeneous coordinate ring $A=\bigoplus_{j} H^{0}(X,\mathscr{O}_X(j))$ is $N$-Koszul, i.e. there is a resolution
\[
\cdots\rightarrow M_1 \rightarrow M_0\rightarrow k\rightarrow 0
\]
where $M_i$ is a free $A$-module, generated in degree $i$, where $i\leq N$.
\end{definition}


\begin{definition}[$q$-T-ampleness {\cite[Definition 6.1]{Totaro}}]
Let $X$ be a projective scheme of dimension $n$.
Suppose the ring of regular functions of $X$, $\mathscr{O}(X)$ is a field. 
We fix a $2n$-Koszul-ample line bundle $\mathscr{O}_{X}(1)$ on $X$.
We say that a line bundle $\mathscr{L}$ is \textit{$q$-T-ample} if there is a positive integer $N$, such that
\[
H^{q+i}\bigl(X,\mathscr{L}^{\otimes N}(-n-i)\bigr)=0,
\]
for $1\leq i\leq n-q$.
\end{definition}

Totaro showed that $q$-T-ampleness is the same as $q$-ampleness \cite[Theorem 6.3]{Totaro}. 
Even though the notion of $q$-T-ampleness may appear technical, the equivalence is the key result of his paper.
It reduces the problem of showing a line bundle being $q$-ample to checking the vanishing of finitely many cohomology groups.
Using the notion of $q$-T-ampleness, Totaro showed that $q$-ampleness is Zariski open \cite[Theorem 8.1]{Totaro}.

The notion of $q$-ampleness can be extended to $\mathbf{R}$-Cartier $\mathbf{R}$-divisors.
\begin{definition}[$q$-ample $\mathbf{R}$-Cartier $\mathbf{R}$-divisors {\cite[Definition 2.20]{GK}}]
Let $X$ be a projective scheme.
An $\mathbf{R}$-Cartier $\mathbf{R}$-divisor $D$ on $X$ is \textit{$q$-ample} if $D$ is numerically equivalent to $cL+A$ with $L$ a $q$-ample line bundle, $c\in \mathbf{R}_{>0}$, $A$ an ample $\mathbf{R}$-Cartier $\mathbf{R}$-divisor.
\end{definition}
Based on the work of Demailly, Peternell and Schneider, Totaro also proved that 
\begin{theorem}[{\cite[Theorem 8.3]{Totaro}}]\label{thm: open}
An integral divisor is $q$-ample if and only if its associated line bundle is $q$-ample.
The set of $q$-ample $\mathbf{R}$-Cartier $\mathbf{R}$-divisors in $\N^{1}(X)_{\mathbf{R}}$ defines an open cone (but not convex in general) and that the sum of a $q$-ample $\mathbf{R}$-Cartier $\mathbf{R}$-divisor and an $r$-ample $\mathbf{R}$-Cartier $\mathbf{R}$-divisor is $(q+r)$-ample.
\end{theorem}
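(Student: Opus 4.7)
\begin{hproof}
The plan is to reduce the $\mathbf{R}$-divisor statements to known line-bundle facts via a scaling argument. The facts I would take as input are: $q$-ampleness of a line bundle depends only on the numerical class and defines a Zariski open cone in $\N^{1}(X)$ by \cite[Theorem 8.1]{Totaro}; both positive powers and $N$-th roots of $q$-ample line bundles are $q$-ample; the tensor of a $q$-ample line bundle with an ample one is $q$-ample; and the tensor of a $q$-ample line bundle and an $r$-ample line bundle is $(q+r)$-ample (Demailly--Peternell--Schneider and Totaro).

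For the equivalence in the first assertion, I would handle the two directions separately. If $D$ is integral and $\mathscr{O}_X(D)$ is $q$-ample, fix an ample divisor $H$ and invoke Zariski openness at the integer level to conclude that $ND-H$ is still a $q$-ample line bundle for $N \gg 0$; then $D \equiv \tfrac{1}{N}(ND-H) + \tfrac{1}{N}H$ realizes $D$ as a $q$-ample $\mathbf{R}$-divisor. Conversely, given $D \equiv cL + A$ integrally with $L$ a $q$-ample line bundle, $c>0$, and $A$ ample, choose $N \gg 0$ and write $Nc = k + \delta$ with $k \in \mathbf{Z}_{>0}$ and $\delta \in [0,1)$, so that $ND \equiv kL + (\delta L + NA)$ with $\delta L + NA$ ample by openness of the ample cone. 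The class of $\mathscr{O}_X(ND)$ is then that of $L^{\otimes k}$ tensored with an ample line bundle, which is $q$-ample; descending from $N$-th powers gives $q$-ampleness of $\mathscr{O}_X(D)$.

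Openness in $\N^{1}(X)_{\mathbf{R}}$ follows by the same absorption principle: if $D \equiv cL + A$ is $q$-ample and $\eta \in \N^{1}(X)_{\mathbf{R}}$ has small norm, then $D + \eta \equiv cL + (A + \eta)$ with $A + \eta$ still ample. For the additive statement, given $D_i \equiv c_i L_i + A_i$ with $L_1$ $q$-ample and $L_2$ $r$-ample, I would apply the scaling trick simultaneously: for $N \gg 0$ write $Nc_i = k_i + \delta_i$ with $k_i \in \mathbf{Z}_{>0}$ and $\delta_i L_i + NA_i$ ample, so that
\[
N(D_1 + D_2) \equiv L_1^{\otimes k_1} \otimes L_2^{\otimes k_2} + (\delta_1 L_1 + \delta_2 L_2 + N A_1 + N A_2).
\]
The first summand is a $(q+r)$-ample line bundle by the line-bundle additivity, and the second is ample, so dividing by $N$ exhibits $D_1 + D_2$ as a $(q+r)$-ample $\mathbf{R}$-divisor.

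The main subtlety will be the forward direction of the first assertion: the simultaneous scaling that repackages the real coefficient $c$ into a positive integer exponent $k$ while ensuring the leftover $\delta L + NA$ stays ample. Once that trick is in place, openness and additivity follow mechanically from the line-bundle level. Non-convexity of the $q$-ample cone is inherited from the integral case and requires no separate argument.
\end{hproof}
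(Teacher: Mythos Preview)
The paper does not prove this statement; it is quoted from \cite[Theorem~8.3]{Totaro} without argument and used as a black box throughout. So there is no proof in the paper to compare your sketch against. That said, your outline is a correct way to derive the $\mathbf{R}$-divisor assertions from the line-bundle facts you list (numerical invariance and Zariski openness from \cite[Theorem~8.1]{Totaro}, stability under powers and roots, and the Demailly--Peternell--Schneider/Totaro additivity for line bundles), and the scaling trick that turns the real coefficient $c$ into an integer exponent plus an ample remainder is exactly the standard passage between the two settings. One small point worth making explicit in the backward direction: since $D$ and $L$ are both integral, $ND-kL$ is an integral divisor whose numerical class equals the ample $\mathbf{R}$-class $\delta L+NA$, hence is an ample line bundle; this is what lets you apply the line-bundle tensor fact rather than an $\mathbf{R}$-divisor statement you have not yet established.
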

These facts are non-trivial.
We shall use the notion of $q$-T-ampleness to prove Proposition \ref{prop: pullback}.

We note that $(n-1)$-ampleness admits a pleasant geometric interpretation, which we shall use a few times in this paper.
\begin{theorem}[{\cite[Theorem 9.1]{Totaro}}]\label{theorem: n-1 ample}
Let $X$ be a projective variety of dimension $n$. 
A line bundle $\mathscr{L}$ on $X$ is $(n-1)$-ample if and only if
$[\mathscr{L}^{\vee}]\in \N^{1}(X)$ does not lie in the pseudoeffective cone.
\end{theorem}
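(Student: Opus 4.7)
\begin{hproof}
My plan is to reduce $(n-1)$-ampleness, via Ottem's Lemma~\ref{lemma: Ottem} and Serre--Grothendieck duality, to effectivity statements about divisor classes of the form $-mL+lH+K_X$, and then to exploit the closedness of the pseudoeffective cone against the openness of the $(n-1)$-ample cone (Theorem~\ref{thm: open}). I will use the standard consequence of Grothendieck duality that for a projective scheme $X$ of dimension $n$ and any coherent sheaf $\mathscr{F}$,
\[
H^{n}(X,\mathscr{F})^{\vee}\cong \Hom_{\mathscr{O}_X}(\mathscr{F},\omega_X),
\]
where $\omega_X$ is the dualizing sheaf.

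For the implication $-L\notin\overline{\Eff}^{1}(X)\Rightarrow\mathscr{L}$ is $(n-1)$-ample, Lemma~\ref{lemma: Ottem} reduces matters to $H^{n}(X,\mathscr{L}^{m}(-l))=0$ for each fixed $l\geq 0$ and $m\gg 0$. By duality this is the vanishing of $H^{0}(X,\mathscr{L}^{-m}(l)\otimes\omega_X)$. In the smooth case a nonzero section produces an effective divisor in the class $-mL+lH+K_X$; dividing by $m$, the numerical class $-L+(l/m)H+(1/m)K_X$ tends to $-L$ as $m\to\infty$ with $l$ held fixed. Since $\overline{\Eff}^{1}(X)$ is closed and $-L$ lies in its open complement, the rescaled class eventually leaves the pseudoeffective cone; by conicity so does $-mL+lH+K_X$, contradicting effectivity. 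Hence the vanishing holds and $\mathscr{L}$ is $(n-1)$-ample.

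For the converse, assume $\mathscr{L}$ is $(n-1)$-ample and specialize the definition to $\mathscr{F}=\omega_X$. Duality and tensor--hom adjunction give
\[
H^{n}(X,\omega_X\otimes\mathscr{L}^{m})^{\vee}\cong\Hom(\omega_X,\omega_X\otimes\mathscr{L}^{-m}),
\]
and multiplication by sections of $\mathscr{L}^{-m}$ produces a natural map $H^{0}(X,\mathscr{L}^{-m})\to\Hom(\omega_X,\omega_X\otimes\mathscr{L}^{-m})$ which is injective, since a nonzero section of a line bundle on an integral $X$ is nonvanishing on a dense open subset. Hence $H^{0}(X,\mathscr{L}^{-m})=0$ for $m\gg 0$, i.e.\ $-L$ is not big. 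By openness of the $(n-1)$-ample cone in $N^{1}(X)_{\mathbf{R}}$ (Theorem~\ref{thm: open}), the class $L-\epsilon H$ is $(n-1)$-ample for any ample $H$ and small rational $\epsilon>0$; clearing denominators, some positive integer multiple is the class of an $(n-1)$-ample line bundle, and reapplying the previous step shows that $-L+\epsilon H$ is not big either. But if $-L\in\overline{\Eff}^{1}(X)$, then $-L+\epsilon H$ is big for every ample $H$ and $\epsilon>0$ by definition of the pseudoeffective cone, a contradiction.

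The step I foresee as the main obstacle is the $(\Leftarrow)$ direction outside the Gorenstein case: when $\omega_X$ fails to be a line bundle, a section of $\mathscr{L}^{-m}(l)\otimes\omega_X$ does not directly encode a Cartier class, so the rescaling argument requires adaptation. I would handle this either by passing to a resolution of singularities and using pullback functoriality of pseudoeffectivity, or by working on a $\mathbf{Q}$-factorial modification where effectivity of Weil divisors descends to pseudoeffectivity in $N^{1}$. The essential point---the interplay between closedness of $\overline{\Eff}^{1}(X)$ and openness of the $(n-1)$-ample cone---is unchanged.
\end{hproof}
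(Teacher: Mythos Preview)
The paper does not supply its own proof of this statement; it is quoted from \cite[Theorem~9.1]{Totaro} without argument. Your sketch is correct and is in fact close to Totaro's original approach: reduce via Lemma~\ref{lemma: Ottem} to vanishing of $H^n$, then dualize.

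For the singular case in the $(\Leftarrow)$ direction, rather than passing to a resolution or a $\mathbf{Q}$-factorial modification, the cleaner fix (and the one Totaro actually uses, as the present paper itself records in the proof of Proposition~\ref{lemma: uniform2}) is simply to embed the dualizing sheaf into a line bundle. Since $X$ is an integral projective variety, $\omega_X$ is torsion-free of rank one, so there is an injection $\omega_X\hookrightarrow\mathscr{O}_X(j)$ for some $j$; then
\[
\Hom_{\mathscr{O}_X}(\mathscr{L}^{m}(-l),\omega_X)\hookrightarrow H^{0}(X,\mathscr{L}^{-m}(l+j)),
\]
and your rescaling argument goes through verbatim with $K_X$ replaced by the Cartier class $jH$. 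This keeps the proof self-contained and avoids resolutions entirely.

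Your $(\Rightarrow)$ direction via openness of the $(n-1)$-ample cone (Theorem~\ref{thm: open}) is also fine and not circular: in \cite{Totaro} the openness result is established independently of, and prior to, Theorem~9.1.
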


We need the following result on the positivity of the pullback of a $q$-ample divisor.
\begin{proposition}[Pullback of a $q$-ample divisor]\label{prop: pullback}
Let $f:X'\rightarrow X$ be a morphism of projective schemes,
$D$ be a $q$-ample divisor on $X$, and $A$ be a relatively (to $f$) ample divisor on $X'$. 
Then $mf^{*}D+A$ is $q$-ample, for $m\gg0$.
\end{proposition}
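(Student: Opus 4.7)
The plan is to apply Totaro's equivalence between $q$-ampleness and $q$-T-ampleness, which reduces the problem to verifying a finite list of cohomological vanishings; these vanishings will then be transported from $X'$ down to $X$ by the Leray spectral sequence, exploiting the $f$-ampleness of $A$. I may assume $X'$ is connected and reduced so that Koszul-ample line bundles make sense on it. Fix a Koszul-ample divisor $H_X$ on $X$, pick $m_0 \gg 0$ so that $B := A + m_0 f^*H_X$ is ample on $X'$, and invoke Backelin's theorem to find $c \gg 0$ so that $H' := cB$ is $2n'$-Koszul-ample on $X'$, where $n' = \dim X'$. Write $L_m := mf^*D + A$.

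By Totaro's theorem, it suffices to find $N$ (to be chosen depending on $m$) such that
\[
H^{q+i}\bigl(X', NL_m - (n'+i)H'\bigr) = 0, \qquad 0 \le i \le n'-q.
\]
Expanding, $NL_m - (n'+i)H' = a_i A + f^*(NmD - c_i H_X)$ with $a_i := N - c(n'+i)$ and $c_i := c(n'+i)m_0$. Choosing $N$ large enough that $a_i \ge s_0$ for every $i$, where $s_0$ is the relative Serre threshold making $R^p f_* \mathscr{O}_{X'}(a_i A) = 0$ for $p > 0$, the projection formula combined with the degenerate Leray spectral sequence yields
\[
H^{q+i}\bigl(X', NL_m - (n'+i)H'\bigr) \;=\; H^{q+i}\bigl(X, \mathscr{G}_i \otimes \mathscr{O}_X(NmD - c_i H_X)\bigr),
\]
where $\mathscr{G}_i := f_* \mathscr{O}_{X'}(a_i A)$ is a coherent sheaf on $X$ depending only on $N$.

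For each $i \ge 1$, the degree $q+i$ strictly exceeds $q$, so $q$-ampleness of $D$ applied to the now fixed coherent sheaf $\mathscr{G}_i \otimes \mathscr{O}_X(-c_i H_X)$ delivers the required vanishing once $Nm$ is large enough; since only finitely many $i$ must be handled, a single threshold on $m$ works. The hard part will be $i = 0$, where I need $H^q(X, \mathscr{G}_0 \otimes \mathscr{O}_X(NmD - c_0 H_X)) = 0$, and $q$-ampleness only supplies vanishing in degrees strictly above $q$. To treat this case I will use the Koszul-ampleness of $H_X$: by the same resolution device Totaro employs to prove $q$-T-ample $\Rightarrow$ $q$-ample, the sheaf $\mathscr{G}_0$ admits a resolution by direct sums of twists $\mathscr{O}_X(-(s+j)H_X)$ with $j$ in a bounded range. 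The associated hypercohomology spectral sequence reduces the required $H^q$-vanishing to vanishings of $H^{q+j}\bigl(X, \mathscr{O}_X(NmD - (c_0 + s + j)H_X)\bigr)$, which are precisely the conditions packaged in the $q$-T-ampleness of $NmD$ and hold once $Nm$ is sufficiently large. With both ranges of $i$ in hand, $L_m$ is $q$-T-ample for $m \gg 0$, and Totaro's equivalence completes the argument.
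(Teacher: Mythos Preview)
Your approach coincides with the paper's: reduce to checking $q$-T-ampleness on $X'$, use $f$-ampleness of $A$ to kill the higher direct images, and then apply the Leray spectral sequence together with $q$-ampleness of $D$ on $X$. The paper makes a slightly more economical choice --- it fixes an arbitrary $2n'$-Koszul-ample $\mathscr{O}_{X'}(1)$ and then finds a single $r$ with $R^jf_*(\mathscr{O}_{X'}(rA)\otimes\mathscr{O}_{X'}(-n'-a))=0$ for the finitely many relevant $a$, whereas you engineer $H'$ from $A$ and $f^*H_X$ so that the twists are automatically of the form ``multiple of $A$ plus pullback'' --- but the mechanism is identical.

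The discrepancy is your $i=0$ case. Totaro's Definition~6.1 requires $H^{q+i}(X',L^{\otimes N}(-n'-i))=0$ only for $1\le i\le n'-q$; the range $0\le i$ stated in this paper's definition is a slip, and indeed the paper's own proof of the proposition checks only $1\le a\le n-q$. So the ``hard case'' $i=0$ does not arise. Your proposed workaround for it is moreover not sound: after resolving $\mathscr{G}_0$ by twists of $\mathscr{O}_X$ and chasing the hypercohomology spectral sequence, the $j=0$ term still demands a degree-$q$ vanishing $H^q(X,\mathscr{O}_X(NmD-cH_X))=0$ for some constant $c$, and neither $q$-ampleness nor (the correctly stated) $q$-T-ampleness of $D$ provides any control in degree $q$ --- for instance $\mathscr{O}_{\mathbb{P}^1}(1)$ is $0$-ample but $H^0(\mathbb{P}^1,\mathscr{O}(N-1))\neq 0$ for every $N\ge 1$. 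Once the spurious $i=0$ step is dropped, your argument for $i\ge 1$ is correct and matches the paper's.
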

\begin{proof}
First, let us show that it suffices to prove the proposition in the case where both $X$ and $X'$ are integral. 
Note that a line bundle is $q$-ample on $X'$ (resp. $f$-ample) if and only if its restriction to each irreducible component of $X'$ is $q$-ample \cite [Proposition 2.3.i,ii]{Ottem} (resp. $f$-ample). 
We may now assume that $X'$ is integral.
Let $X_{1}$ be an irreducible component of $X$ that contains the image of $X'$.
The morphism $f$ factors through $X_1$.
Since the restriction $D|_{X_{1}}$ is $q$-ample, we may replace $X$ by $X_1$.

Now we may assume both $X$ and $X'$ are integral.
In fact, we shall prove that $mf^{*}D+A$ is $q$-T-ample, for $m\gg 0$.
In other words, we shall show that for $m\gg 0$, there is a positive integer $r$, such that
\[
H^{q+a}\biggl(X', \mathscr{O}_{X'}\bigl(r(mf^{*}D+A)\bigr)\otimes \mathscr{O}_{X'}(-n-a)\biggr)=0
\]
for $1\leq a\leq n-q$. Here $\mathscr{O}_{X'}(1)$ is a $2n$-Koszul-ample line bundle on $X'$, where $n=\dim X'$.

Using the relative ampleness of $A$, 
one can find a positive integer $r$ such that 
\[
R^{j}f_{*}\bigl(\mathscr{O}_{X'}(rA)\otimes \mathscr{O}_{X'}(-n-a)\bigr) =0,
\]
for $j>0$ and $1\leq a\leq n-q$. 
The Leray spectral sequence then says
\begin{multline*}
H^{q+a}\biggl(X', \mathscr{O}_{X'}\bigl(r(mf^{*}D+A)\bigr)\otimes \mathscr{O}_{X'}(-n-a)\biggr)
\\
\cong
H^{q+a}\biggl(X,\mathscr{O}_{X}(rmD)\otimes
f_{*}\bigl(\mathscr{O}_{X'}(rA)\otimes
\mathscr{O}_{X'}(-n-a)\bigr)\biggr).
\end{multline*}
The group on the right hand side vanishes for all big $m$, 
by the $q$-ampleness of $rD$.
\end{proof}

We now review the definition of an ample subscheme, given by Ottem:
\begin{definition}[Ample subscheme {\cite[Definition 3.1]{Ottem}}]\label{definition: ample}
Let $X$ be a projective scheme,
$Y$ be a closed subscheme of $X$ of codimension $r$ and 
$\pi: \Bl_Y X\rightarrow X$ be the blowup of $X$ along $Y$.
We say that $Y$ is an \textit{ample subscheme} of $X$ if
the exceptional divisor $E$ of $\pi$ is $(r-1)$-ample in $\Bl_Y X$.
\end{definition}
We shall follow his definition in this paper. 
An example of an ample subscheme would be the zero locus (of codimension $r$) of a section of an ample vector bundle of rank $r$ \cite[Proposition 4.5]{Ottem}.
On the other hand, many good properties listed in Hartshorne's book \cite[p.XI]{Hartshorne} are satisfied under this definition.
Before stating some of these properties, we need the definition of \textit{cohomological dimension} of a scheme $U$:
It refers to the integer
    \[
    \cd(U):=\max\{i\in \mathbb{Z}_{\geq 0}|\, H^{i}(U,\mathscr{F})\neq 0 \textit{, for some coherent sheaf }\mathscr{F}.\}
    \]
\begin{theorem}\label{theorem: ample subvariety}
Let $Y$ be a smooth closed subscheme of a smooth projective scheme $X$. 
    \begin{enumerate}
        \item $Y$ is ample if and only if its normal bundle is ample and the cohomological dimension of the complement is $r-1$, i.e., $\cd(X\backslash Y)=r-1$. \label{theorem: normal bundle}
    \end{enumerate}
Assume further that $Y$ is an ample subscheme in $X$. Then
    \begin{enumerate}
    \setcounter{enumi}{1}
        \item Generalized Lefschetz hyperplane theorem with rational coefficients holds, i.e. 
        $H^{i}(X,\mathbb{Q})\rightarrow H^{i}(Y,\mathbb{Q})$ is an isomorphism for $i<\dim Y$ and is an injection for $i=\dim Y$. \label{theorem: lefschetz}
        \item $Y$ is numerically positive, i.e. $Y\cdot Z>0$ for any effective cycle $Z$ of dimension $r$. \label{theorem: intersection}
        \item \label{theorem: H^{i}}
        $H^{i}(X,\mathscr{F})\rightarrow H^{i}(\widehat{X},\widehat{\mathscr{F}})$ is an isomorphism for $i<\dim Y$ and is injective for $i=\dim Y$.  Here $\widehat{X}$ is the formal completion of $X$ along $Y$, $\mathscr{F}$ is a locally free sheaf on $X$ and $\widehat{\mathscr{F}}$ is its restriction to $\widehat{X}$. 
    \end{enumerate}
   
\end{theorem}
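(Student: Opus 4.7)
The plan is to observe that all four conclusions are established in Ottem's paper \cite{Ottem}, and the proof would consist of pointing to the appropriate references together with the unifying reduction they share: passage to the blowup $\pi: \tilde X := \Bl_Y X \to X$, whose exceptional divisor $E$ is $(r-1)$-ample by the definition of ampleness for $Y$.

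For part (1), the identification $E = \mathbb{P}(\mathscr{N}_{Y/X})$ with $\mathscr{O}_E(-E) \cong \mathscr{O}_{\mathbb{P}(\mathscr{N}_{Y/X})}(1)$ lets one extract ampleness of $\mathscr{N}_{Y/X}$ from $(r-1)$-ampleness of $E$ by restricting to $E$ and invoking the structure of projective bundles. The equality $\cd(X \setminus Y) = r-1$ then follows from Totaro's characterization of $q$-ampleness via vanishing of cohomology on the complement, combined with the identification $\tilde X \setminus E \cong X \setminus Y$. The converse, promoting ampleness of $\mathscr{N}_{Y/X}$ together with $\cd(X \setminus Y) = r-1$ back to $(r-1)$-ampleness of $E$, proceeds via Lemma \ref{lemma: Ottem} and a relative Leray spectral sequence for $\pi$.

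Parts (2) and (4) both reduce to the Lefschetz hyperplane theorem and the formal-functions comparison for $(r-1)$-ample divisors, which are established in the $q$-ample setting by Totaro \cite{Totaro}, applied to $E$ on $\tilde X$; the descent from $\tilde X$ to $X$ uses that $\pi$ is an isomorphism over $X \setminus Y$ and that $R^i\pi_* \mathscr{O}_{\tilde X} = 0$ for $i > 0$ in characteristic zero. Part (3) would follow by expressing $Y \cdot Z = \pi_*(E^r \cdot \pi^*Z)$ and applying the numerical positivity granted by the $(r-1)$-ampleness of $E$ to $\pi^*Z$.

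I expect the main obstacle to be the converse direction in part (1): promoting ampleness of $\mathscr{N}_{Y/X}$ together with the bound on cohomological dimension of the complement to a global $(r-1)$-ampleness statement for $E$ needs more than a formal reduction, and will require careful spectral sequence bookkeeping together with a nontrivial use of the vanishing of cohomology on $X \setminus Y$.
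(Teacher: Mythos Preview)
Your sketches for parts (1), (2), and (4) are broadly in line with what the paper does, which is simply to cite the relevant results: \cite[Theorem 5.4]{Ottem} for (1), \cite[Corollary 5.3]{Ottem} for (2), and \cite[Chapter III, Theorem 3.4]{Hartshorne} for (4). One small correction: part (4) is not a consequence of a Totaro-style theorem for $q$-ample divisors on $\tilde X$; it is Hartshorne's comparison theorem, whose input is precisely the bound $\cd(X\setminus Y)\le r-1$ obtained in part (1).

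Part (3) is where your proposal has a genuine gap. Your plan is to write $Y\cdot Z=\pi_*(E^r\cdot\pi^*Z)$ and then invoke ``numerical positivity granted by the $(r-1)$-ampleness of $E$.'' Two problems. First, the identity is off by a sign: for a smooth center of codimension $r$ one has $\pi_*(E^r)=(-1)^{r-1}[Y]$, so the intersection number you obtain is $(-1)^{r-1}(E^r\cdot\pi^*Z)$, and for even $r$ you would need the opposite inequality. Second, and more seriously, there is no general numerical positivity statement attached to $(r-1)$-ampleness: $q$-ampleness is a cohomological condition, the $q$-ample cone is not even convex, and restricting $E$ to an $r$-dimensional cycle only tells you (via Theorem~\ref{theorem: n-1 ample}) that $-E$ is not pseudoeffective there, which does not control the sign of $E^r$.

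The paper's argument for (3) is different and short: from part (1) one knows $\cd(X\setminus Y)=r-1$, so every effective cycle $Z$ of dimension $r$ must meet $Y$ (a closed $r$-dimensional subscheme of $X\setminus Y$ would violate the cohomological-dimension bound). One then applies the Fulton--Lazarsfeld positivity theorem \cite[Corollary 8.4.3]{Laz}: if $Y$ has ample normal bundle and $Y\cap Z\neq\emptyset$ with $\dim Y+\dim Z=\dim X$, then $Y\cdot Z>0$. Both ingredients---ample normal bundle and nonempty intersection---come from part (1), so (3) follows.
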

\begin{proof}
\cite[Theorem 5.4]{Ottem}, \cite[Corollary 5.3]{Ottem} and \cite[Chapter III, Theorem 3.4]{Hartshorne} give (\ref{theorem: normal bundle}), (\ref{theorem: lefschetz}) and (\ref{theorem: H^{i}}) respectively.
For (\ref{theorem: intersection}), since $\cd(X-Y)=r-1$, $Y$ meets any effective cycle of dimension $r$. We can then apply the result of Fulton and Lazarsfeld \cite[Corollary 8.4.3]{Laz}, which says if $Y$ has ample normal bundle and $Y$ meets $Z$, where $Z$ is an effective cycle of complementary dimension to that of $Y$, then $Y\cdot Z>0$. 
\end{proof}
The above list of properties is incomplete, for a more complete picture, c.f. \cite{Ottem}.

\section{Partial regularity and a Fujita-type vanishing theorem for \textit{q}-ample divisors}
We shall quickly go through the results in sections 2 and 3 in Totaro's paper \cite{Totaro}.
There Totaro developed on Arapura's idea \cite{Arapura} on using resolution of the diagonal to study the Castelnuovo-Mumford regularity of a coherent sheaf.
Using these ideas, we shall provide a weak extension of a vanishing theorem for $q$-ample line bundles proved by Totaro \cite[Theorem 6.4]{Totaro} (Theorem \ref{theorem: unif}).
From this, we prove a generalization of the Fujita vanishing theorem (Theorem \ref{theorem: fujita}) to the $q$-ample divisors setting.
It also generalizes the Fujita-type vanishing theorem that K\"uronya proved \cite[Theorem C]{Kuronya}.
One may also deduce Theorem \ref{theorem: fujita} from Keeler's generalization of Fujita vanishing theorem \cite[Theorem 4.5 and 5.5]{Keeler}, combined with Totaro's results on $q$-ample divisors.
We shall not pursue this here.
We shall later apply this theorem to prove Theorem \ref{theorem: restriction}, as well as Theorems \ref{theorem: trans ample} and \ref{theorem: trans nef}.

If we are only interested in the vanishing of the top cohomology,
we showed that the assumptions in Theorem \ref{theorem: fujita} can be weakened.
This is Proposition \ref{lemma: uniform2}.
It will be used later in the proof of Theorem \ref{theorem: pseudoeffective}.

In this section, we assume $X$ to be a projective scheme
of dimension $n$ over a field $k$,
with the ring of regular functions on $X$ being a field.
Furthermore, we fix a $2n$-Koszul-ample line bundle $\mathscr{O}_{X}(1)$ on $X$.

\begin{theorem} \label{thm: resolution of the diagonal}\cite[Theorem 2.1]{Totaro}
On $X\times_k X$, we have an exact sequence of coherent sheaves:
\begin{equation}\label{resolution}
\mathscr{R}_{2n-1}\boxtimes\mathscr{O}_{X}(-2n+1)\rightarrow
\cdots\rightarrow\mathscr{R}_1\boxtimes\mathscr{O}_X(-1)
\rightarrow\mathscr{R}_0\boxtimes\mathscr{O}_X
\rightarrow\mathscr{O}_{\Delta}
\rightarrow 0,
\end{equation}
where $\Delta\subset X\times_k X$ is the diagonal. 
Here all the $\mathscr{R}_i$'s are locally free sheaves on $X$ that can be fit into short exact sequences:
\begin{equation}\label{R}
0
\rightarrow \mathscr{R}_{i+1}\otimes \mathscr{O}_X(-1)
\rightarrow B_{i+1}\otimes_{k} \mathscr{O}_X(-1)
\rightarrow \mathscr{R}_{i} \rightarrow 0,
\end{equation}
where the $B_{i+1}$'s are $k$-vector spaces.
\end{theorem}

\begin{lemma}\label{lemma: vanishing of tensor product}\cite[Lemma 3.1]{Totaro}
Let $\mathscr{E}$ and $\mathscr{F}$ be a locally free sheaf and a coherent sheaf on $X$ respectively. 
Suppose for each pair of integers $(a,b)$, such that $0\leq a\leq 2n-i$ and $b\geq 0$,
$H^{b}(\mathscr{E}\otimes \mathscr{R}_a)=0$ or
$H^{i+a-b}\bigl(\mathscr{F}(-a)\bigr)=0$.
Then
$H^{i}(\mathscr{E}\otimes\mathscr{F})=0$.
\end{lemma}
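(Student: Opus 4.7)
\begin{hproof}
The plan is to derive the vanishing from the resolution of the diagonal \eqref{resolution} on $X\times_{k}X$ via a hypercohomology spectral sequence, and to then analyze the $E_{1}$ page with the K\"unneth formula.

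First I would identify
\[
H^{i}(X,\mathscr{E}\otimes\mathscr{F})\cong H^{i}\bigl(X\times X,\mathscr{O}_{\Delta}\otimes(\mathscr{E}\boxtimes\mathscr{F})\bigr),
\]
which holds because $\mathscr{E}$ is locally free and hence $\Delta^{*}(\mathscr{E}\boxtimes\mathscr{F})=\mathscr{E}\otimes\mathscr{F}$. Tensoring \eqref{resolution} with $p_{1}^{*}\mathscr{E}\otimes p_{2}^{*}\mathscr{F}=\mathscr{E}\boxtimes\mathscr{F}$ then produces a complex $C^{\bullet}$ on $X\times X$ with
\[
C^{-a}=(\mathscr{R}_{a}\otimes\mathscr{E})\boxtimes\mathscr{F}(-a),\qquad 0\leq a\leq 2n-1.
\]
Because $\mathscr{E}$ is locally free, tensoring \eqref{resolution} first by $p_{1}^{*}\mathscr{E}$ preserves exactness and yields a locally free resolution of $\Delta_{*}\mathscr{E}$; the subsequent tensoring by $p_{2}^{*}\mathscr{F}$ produces no higher $\mathrm{Tor}$, since $\Delta$ is a section of $p_{2}$ and the projection formula gives $\mathscr{O}_{\Delta}\otimes^{L}p_{2}^{*}\mathscr{F}=\Delta_{*}\mathscr{F}$. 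This identifies the hypercohomology of $C^{\bullet}$ with $H^{\bullet}(X,\mathscr{E}\otimes\mathscr{F})$.

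Next I would invoke the hypercohomology spectral sequence
\[
E_{1}^{-a,q}=H^{q}(X\times X,C^{-a})\Longrightarrow H^{q-a}(X,\mathscr{E}\otimes\mathscr{F}),
\]
and expand each term by K\"unneth:
\[
E_{1}^{-a,q}=\bigoplus_{b+c=q}H^{b}(X,\mathscr{E}\otimes\mathscr{R}_{a})\otimes H^{c}(X,\mathscr{F}(-a)).
\]
The entries contributing to $H^{i}(X,\mathscr{E}\otimes\mathscr{F})$ sit on the line $q-a=i$, and the bound $q\leq 2n$ together with $a\geq 0$ restricts $a$ to $0\leq a\leq 2n-i$, precisely the range in the hypothesis. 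Writing $q=i+a$ and $c=i+a-b$, the hypothesis ensures each summand has a zero factor, so every $E_{1}^{-a,i+a}$ is zero and hence $H^{i}(X,\mathscr{E}\otimes\mathscr{F})=0$.

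The most delicate step I anticipate is the identification of hypercohomologies, i.e.\ verifying that no higher $\mathrm{Tor}$ between $\mathscr{O}_{\Delta}$ and $\mathscr{E}\boxtimes\mathscr{F}$ contaminates the abutment and shifts the indexing. If this proves awkward, an equivalent and more hands-on route is to induct on $a$ using the short exact sequences \eqref{R}: after tensoring each such sequence with $\mathscr{E}$ on the first factor and with the appropriate twist of $\mathscr{F}$ on the second, one peels off the resolution one step at a time, and the hypothesis delivers exactly the vanishings needed to keep the induction going. Both routes consume the same K\"unneth-type decomposition, and the range $0\leq a\leq 2n-i$ is forced by the length $2n$ of the diagonal resolution together with the cohomological upper bound $2n$ on $X\times X$.
\end{hproof}
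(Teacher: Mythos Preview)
Your proposal is correct and follows essentially the same approach as the paper's sketch: tensor the diagonal resolution \eqref{resolution} by $\mathscr{E}\boxtimes\mathscr{F}$, note that exactness is preserved, and then use K\"unneth on the resulting complex. You have simply made explicit what the paper leaves implicit---the hypercohomology spectral sequence bookkeeping and the Tor-vanishing justification for exactness via $\Delta$ being a section of $p_2$---but the strategy is identical.
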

\begin{hproof}
After tensoring with $\mathscr{E}\boxtimes\mathscr{F}$, the sequence (\ref{resolution}) remains exact, we now apply K\"unneth's formula.
\end{hproof}

\begin{definition}[Partial regularity {\cite[Definition 2.1]{Keeler}}]\label{def: reg}
We fix a $2n$-Koszul ample line bundle $\mathscr{O}_{X}(1)$.
Let $\mathscr{G}$ be a coherent sheaf on $X$ and let $q$ be a non-negative integer.
We say that $\mathscr{G}$ is \textit{$(0,q)$-regular} if
\begin{equation}\label{q-regular}
H^{q+i}\bigl(X,\mathscr{G}\otimes \mathscr{O}_{X}(-i)\bigr)= 0
\end{equation}
for all $\ 1\leq i \leq n-q$.

We set
\[
\reg^{q}(\mathscr{F})= \inf\{m\in \mathbb{Z}\,| \, \mathscr{F}\otimes\mathscr{O}_{X}(m) \text{ is } (0,q) \text{-regular.}\}
\]
\end{definition}
When $q=0$, this is just the usual Castelnuovo-Mumford regularity of the coherent sheaf $\mathscr{F}$, relative to $\mathscr{O}_{X}(1)$. 
It is clear that $\reg^{q}(\mathscr{F})\in[-\infty,+\infty)$, by the ampleness of $\mathscr{O}_{X}(1)$.
\begin{lemma}[{\cite[Lemma 2.2]{Keeler}}]\label{lemma: regularity}
If $\mathscr{F}$ is $(0,q)$-regular, then $\mathscr{F}\otimes\mathscr{O}_{X}(1)$ is also $(0,q)$-regular.
\end{lemma}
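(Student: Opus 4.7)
I would prove this by induction on $n=\dim X$, mimicking Mumford's classical proof of the Castelnuovo–Mumford regularity lemma, adapted to the $q$-shift. The base case $n\le q$ is vacuous since the range $1\le i\le n-q$ in Definition \ref{def: reg} is empty. For the inductive step, since $\mathscr{O}_X(1)$ is very ample (being Koszul-ample) and the base field has characteristic $0$, a Bertini-type genericity argument produces a section $s\in H^0(X,\mathscr{O}_X(1))$ that avoids every associated prime of $\mathscr{F}$, so multiplication by $s$ is injective. Setting $H:=V(s)$, a Cartier divisor of dimension $n-1$, I obtain the short exact sequence
\[
0\to \mathscr{F}(-1)\xrightarrow{\cdot s}\mathscr{F}\to \mathscr{G}\to 0,
\]
where $\mathscr{G}$ is supported on $H$.

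Next I would show that $\mathscr{G}|_H$ is $q$-regular on $H$ with respect to $\mathscr{O}_H(1):=\mathscr{O}_X(1)|_H$, so that the induction hypothesis applies on $H$. Twisting the above sequence by $\mathscr{O}_X(-i)$ and taking the long exact sequence in cohomology gives the piece
\[
H^{q+i}(X,\mathscr{F}(-i))\to H^{q+i}(X,\mathscr{G}(-i))\to H^{q+i+1}(X,\mathscr{F}(-i-1)).
\]
For $1\le i\le n-q-1$ both outer groups vanish by the $q$-regularity of $\mathscr{F}$, hence the middle group, which equals $H^{q+i}(H,\mathscr{G}|_H(-i))$ because $\mathscr{G}$ is supported on $H$, is also zero. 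This is precisely the $q$-regularity of $\mathscr{G}|_H$ on $H$, so by induction $\mathscr{G}|_H(1)$ is also $q$-regular on $H$, giving $H^{q+i}(H,\mathscr{G}|_H(1-i))=0$ for $1\le i\le n-q-1$.

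Finally, to deduce that $\mathscr{F}(1)$ is $q$-regular on $X$, I twist the same short exact sequence by $\mathscr{O}_X(1-i)$ and read off the piece
\[
H^{q+i}(X,\mathscr{F}(-i))\to H^{q+i}(X,\mathscr{F}(1-i))\to H^{q+i}(H,\mathscr{G}|_H(1-i)).
\]
For $1\le i\le n-q-1$ the left term vanishes by hypothesis and the right by the previous paragraph; for $i=n-q$ the left term still vanishes by hypothesis while the right vanishes for dimensional reasons, since $\dim H=n-1$. This gives $H^{q+i}(X,\mathscr{F}(1-i))=0$ for all $1\le i\le n-q$, which is exactly the $q$-regularity of $\mathscr{F}(1)$. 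The main obstacle is the passage to the hyperplane section: producing a non-zero-divisor section $s$ (which rests on the very ampleness implicit in Koszul-ampleness and on infinitude of the ground field) and verifying that $\mathscr{O}_H(1)$ inherits Koszul-ampleness of the order needed to invoke the inductive hypothesis on $H$. The cohomological bookkeeping itself is a routine chase through the long exact sequences.
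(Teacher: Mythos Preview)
Your argument is correct and is the standard Mumford-style hyperplane-section proof adapted to the $q$-shift. The paper does not supply its own proof of this lemma; it simply cites \cite[Lemma~2.2]{Keeler}, and Keeler's argument is precisely the induction you outline.

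Your closing worry about whether $\mathscr{O}_H(1)$ inherits $2(n-1)$-Koszul-ampleness is a red herring: nothing in the proof of \emph{this} lemma uses the Koszul resolution of the diagonal or the hypothesis that $\mathscr{O}(X)$ is a field. The inductive statement you actually need is purely the vanishing condition in Definition~\ref{def: reg}, and that makes sense on any projective scheme with any very ample line bundle. So you may run the induction on the bare statement ``if $H^{q+i}(\mathscr{F}(-i))=0$ for $1\le i\le n-q$, then the same holds for $\mathscr{F}(1)$,'' and the passage to $H$ is then unproblematic. The only genuine input is the existence of a section $s\in H^0(X,\mathscr{O}_X(1))$ avoiding the finitely many associated primes of $\mathscr{F}$, which follows from very ampleness of $\mathscr{O}_X(1)$ and infiniteness of the base field---both guaranteed here.
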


\begin{lemma}[{\cite[Lemma 3.3]{Totaro}}]\label{lemma: R_i}
If $\mathscr{F}$ is a $(0,q)$-regular coherent sheaf on $X$, then
\[
H^{j}\bigl(X,\mathscr{F}\otimes \mathscr{R}_{i}\bigr)=0
\]
for $j>q$ and $i<n+j$. Here, we are referring to the $\mathscr{R}_{i}$'s that appear in Theorem \ref{thm: resolution of the diagonal}.
\end{lemma}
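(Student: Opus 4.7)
The strategy is to iterate the short exact sequences (\ref{R}) that define the $\mathscr{R}_i$ so as to convert $H^j(X,\mathscr{F}\otimes\mathscr{R}_i)$ into cohomology in a degree exceeding $n = \dim X$, where Grothendieck vanishing finishes the job.

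The key preliminary fact I would invoke is the partial-regularity analog of the classical Castelnuovo--Mumford theorem (due to Keeler, as referenced in Definition \ref{def: reg}): if $\mathscr{F}$ is $q$-regular, then
\[
H^{j'}(X,\mathscr{F}\otimes\mathscr{O}_X(-s)) = 0 \quad\text{for every } j' > q \text{ and every } 0 \leq s \leq j' - q.
\]
The proof of this vanishing proceeds by induction on $\dim X$ using a generic hyperplane section, in direct analogy with the classical case.

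With this in hand, for fixed $j > q$ and any $s \geq 0$ I would tensor the sequence (\ref{R}), indexed by $i+s+1$, with $\mathscr{F}\otimes\mathscr{O}_X(-s)$ to produce
\[
0 \to \mathscr{F}(-s-1)\otimes\mathscr{R}_{i+s+1} \to B_{i+s+1}\otimes\mathscr{F}(-s-1) \to \mathscr{F}(-s)\otimes\mathscr{R}_{i+s} \to 0.
\]
In the associated long exact sequence, both flanking terms $B_{i+s+1}\otimes H^{j+s}(\mathscr{F}(-s-1))$ and $B_{i+s+1}\otimes H^{j+s+1}(\mathscr{F}(-s-1))$ vanish by the displayed regularity statement (the hypothesis $j>q$ being precisely what guarantees $s+1 \leq (j+s)-q$), so the connecting map
\[
H^{j+s}(X,\mathscr{F}(-s)\otimes\mathscr{R}_{i+s}) \;\xrightarrow{\;\sim\;}\; H^{j+s+1}(X,\mathscr{F}(-s-1)\otimes\mathscr{R}_{i+s+1})
\]
is an isomorphism. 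Splicing these isomorphisms for $s = 0,1,\dots,k-1$ gives
\[
H^j(X,\mathscr{F}\otimes\mathscr{R}_i) \;\cong\; H^{j+k}(X,\mathscr{F}(-k)\otimes\mathscr{R}_{i+k}).
\]
Taking $k = n-j+1$ (the case $j>n$ being immediate by Grothendieck vanishing) makes the right-hand side an $H^{n+1}$ of a coherent sheaf on an $n$-dimensional scheme, hence zero. The hypothesis $i < n+j$ keeps $i+k \leq 2n$, which is the range in which $\mathscr{R}_{i+k}$ appears in the resolution (\ref{resolution}).

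The main obstacle, as I see it, is the Mumford-type partial regularity input in the first step; after that, the argument reduces to a routine iteration of long exact sequences. The only minor point of care is the endpoint of the iteration at $i = n+j-1$, which can be handled by extending (\ref{resolution}) one step beyond $\mathscr{R}_{2n-1}$ via the syzygy construction implicit in (\ref{R}), or by inspecting the terminal map of the diagonal resolution directly.
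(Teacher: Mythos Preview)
The paper does not prove this lemma; it is quoted from Totaro \cite[Lemma~3.3]{Totaro} and used as a black box. Your argument is correct and is precisely the one Totaro gives: tensor the short exact sequences \eqref{R} with $\mathscr{F}(-s)$, use the $q$-regularity of $\mathscr{F}$ together with Lemma~\ref{lemma: regularity} to kill the middle terms in the long exact sequence, and iterate until the cohomological degree exceeds $n$.

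Two small remarks. First, your ``preliminary fact'' does not need a separate hyperplane-section argument: the vanishing $H^{j'}(X,\mathscr{F}(-s))=0$ for $j'>q$ and $0\le s\le j'-q$ is an immediate consequence of applying Definition~\ref{def: reg} to $\mathscr{F}(m)$ with $m=(j'-q)-s\ge 0$, which is $q$-regular by Lemma~\ref{lemma: regularity}; the induction on dimension is hidden inside Keeler's proof of that lemma, not in the step you are taking. Second, the endpoint issue you flag at $i=n+j-1$ is handled exactly as you say: the $2n$-Koszul hypothesis guarantees the existence of the sequence \eqref{R} one step further, i.e.\ the sheaf $\mathscr{R}_{2n}$ is available, so no separate inspection of the terminal map is required.
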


Next, we generalize \cite[Theorem 3.4]{Totaro}.

\begin{theorem}[Subadditivity of partial regularity]\label{theorem: subadditivity}
Let $\mathscr{E}$ and $\mathscr{F}$ be a locally free sheaf and a coherent sheaf on $X$ respectively, then 
\[
\reg^{q}(\mathscr{E}\otimes \mathscr{F})\leq \reg^{l}(\mathscr{E}) + \reg^{q-l}(\mathscr{F})
\]
for any $0\leq l\leq q$.
\end{theorem}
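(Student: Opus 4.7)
The plan is to mimic Totaro's proof of \cite[Theorem 3.4]{Totaro} (the case $q=l=0$) by feeding the two partial regularities into Lemma \ref{lemma: vanishing of tensor product}. After twisting $\mathscr{E}$ and $\mathscr{F}$ by nonnegative powers of $\mathscr{O}_X(1)$ and invoking Lemma \ref{lemma: regularity}, I would reduce to the case $\reg^{l}(\mathscr{E})=0$ and $\reg^{q-l}(\mathscr{F})=0$, so that the target becomes the $q$-regularity of $\mathscr{E}\otimes\mathscr{F}$, that is, $H^{q+i}(X,\mathscr{E}\otimes\mathscr{F}\otimes\mathscr{O}_X(-i))=0$ for $1\leq i\leq n-q$. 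Fixing such an $i$, I apply Lemma \ref{lemma: vanishing of tensor product} to the pair $(\mathscr{E},\mathscr{F}\otimes\mathscr{O}_X(-i))$ in cohomological degree $q+i$; it then suffices to check that for every $0\leq a\leq 2n-q-i$ and every $b\geq 0$,
\[
H^{b}(\mathscr{E}\otimes\mathscr{R}_{a})=0 \quad\text{or}\quad H^{q+i+a-b}(\mathscr{F}\otimes\mathscr{O}_X(-i-a))=0.
\]

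Next I run a case analysis on the relative sizes of $b$ and $l$. If $b>l$, Lemma \ref{lemma: R_i} applied to the $l$-regular sheaf $\mathscr{E}$ gives $H^{b}(\mathscr{E}\otimes\mathscr{R}_{a})=0$ whenever $a<n+b$; in the residual subcase $b>l$ and $a\geq n+b$, one has $q+i+a-b\geq q+i+n>n$, so Grothendieck's vanishing kills the second group for free. If instead $b\leq l$, then $\mathscr{F}\otimes\mathscr{O}_X(l-b)$ is again $(q-l)$-regular, by iterating Lemma \ref{lemma: regularity} and using $l-b\geq 0$. Applying $(q-l)$-regularity with exponent $k:=i+a+l-b\geq 1$ gives
\[
H^{(q-l)+k}\bigl((\mathscr{F}\otimes\mathscr{O}_X(l-b))\otimes\mathscr{O}_X(-k)\bigr)=H^{q+i+a-b}(\mathscr{F}\otimes\mathscr{O}_X(-i-a))=0,
\]
which is precisely the required vanishing.

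The main obstacle I anticipate is making sure the case analysis covers every admissible pair $(a,b)$ without a gap. The subtle case is $b>l$ with $a\geq n+b$, where Lemma \ref{lemma: R_i} no longer delivers and one has to notice that the cohomological degree $q+i+a-b$ has already been pushed past $n$, so Grothendieck's vanishing on the $\mathscr{F}$ side rescues the argument. Past that balancing act, the remainder of the proof is careful bookkeeping of the indices in the $(q-l)$-regularity of $\mathscr{F}$, and no new geometric input beyond Totaro's resolution of the diagonal and his $\mathscr{R}_i$-vanishing is required.
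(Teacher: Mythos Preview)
Your proposal is correct and follows essentially the same route as the paper's proof: reduce to $\mathscr{E}$ being $l$-regular and $\mathscr{F}$ being $(q-l)$-regular, apply Lemma~\ref{lemma: vanishing of tensor product} at degree $q+i$, and split into the same three cases ($b>l$ with $a<n+b$ via Lemma~\ref{lemma: R_i}; $b>l$ with $a\geq n+b$ via dimensional vanishing; $b\leq l$ via $(q-l)$-regularity of $\mathscr{F}$ and Lemma~\ref{lemma: regularity}). One cosmetic point: in the last case you should note that when $k=i+a+l-b$ exceeds $n-(q-l)$ the degree $q+i+a-b$ already exceeds $n$, so the vanishing is again for dimensional reasons; otherwise your bookkeeping matches the paper exactly.
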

\begin{proof}
After replacing $\mathscr{E}$ and $\mathscr{F}$ by $\mathscr{E}\otimes\mathscr{O}_{X}(\reg^l(\mathscr{E}))$ and $\mathscr{F}\otimes\mathscr{O}_{X}(\reg^{q-l}(\mathscr{F}))$ respectively, 
we may assume $\mathscr{E}$ and $\mathscr{F}$ are $(0,l)$-regular and $(0,q-l)$-regular, respectively.
It suffices to show that $\mathscr{E}\otimes \mathscr{F}$ is $(0,q)$-regular, i.e.,
\[
H^{q+i}\bigl(X,\mathscr{E}\otimes\mathscr{F}\otimes\mathscr{O}_X(-i)\bigr)=0
\]
for $1\leq i\leq n-q$.
To this end, we shall check that 
for each pair of integers $(a,b)$ such that $0\leq a\leq 2n-(q+i)$ and $b\geq 0$,
\[
H^{b}(X,\mathscr{E}\otimes \mathscr{R}_a)=0
\]
or
\[
H^{(q+i)+a-b}\bigl(X,\mathscr{F}(-a-i)\bigr)=0
\]
and conclude by applying Lemma \ref{lemma: vanishing of tensor product}.

\begin{case} 
$b>l$ and $a<n+b$.
\end{case}
By Lemma \ref{lemma: R_i},
\[
H^{b}(X,\mathscr{E}\otimes \mathscr{R}_{a})=0.
\]

\begin{case}
$b>l$ and $n+b\leq a\leq 2n-(q+i)$.
\end{case}
Since $q+i+a-b\geq q+i+n>n=\dim X$,
\[
H^{(q+i)+a-b}\bigl(X,\mathscr{F}\otimes\mathscr{O}_{X}(-a-i)\bigr)=0.
\]

\begin{case}
$0\leq b\leq l$ and $0\leq a \leq 2n-(q+i)$.
\end{case}
By $(0,q-l)$-regularity of $\mathscr{F}$,
Lemma \ref{lemma: regularity} and the fact that $q-b\geq q-l$,
\[
H^{(q-b)+a+i}\bigl(X,\mathscr{F}\otimes\mathscr{O}_{X}(-a-i)\bigr)=0.
\]
This proves the theorem.
\end{proof}

We next prove an analogue of \cite[Theorem 6.4]{Totaro}.
\begin{theorem}[Uniform vanishing]\label{theorem: unif}
Let $\mathscr{L}$ be a $q$-ample line bundle on $X$. 
Then for any integer $N$, there is an integer $m_{N}$, 
such that, for any coherent sheaf $\mathscr{F}$ on $X$ with $\reg^{q'}(\mathscr{F})\leq N$,
\[
H^{i}(X,\mathscr{F}\otimes\mathscr{L}^{\otimes m})=0
\]
for $i>q+q'$ and $m>m_{N}$.
\end{theorem}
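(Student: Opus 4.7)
The plan is to combine the subadditivity of partial regularity (Theorem \ref{theorem: subadditivity}) with a uniform partial regularity estimate for tensor powers of $\mathscr{L}$, and then deduce the cohomological vanishing from $(q+q')$-regularity.

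First I would establish: for every integer $K$, there exists $m_1(K)$ such that $\reg^{q}(\mathscr{L}^{\otimes m}) \leq K$ for all $m \geq m_1(K)$. By the definition of partial regularity, this is the simultaneous vanishing of the finitely many cohomology groups $H^{q+j}(X,\mathscr{L}^{\otimes m}\otimes\mathscr{O}_X(K-j))$ for $1 \leq j \leq n-q$, which follows immediately from Lemma \ref{lemma: Ottem} applied to each (fixed) twist.

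Next, taking the locally free sheaf $\mathscr{E} = \mathscr{L}^{\otimes m}$ and splitting the total regularity index as $q+q'$ with intermediate value $l = q$, Theorem \ref{theorem: subadditivity} gives
\[
\reg^{q+q'}(\mathscr{L}^{\otimes m}\otimes \mathscr{F}) \;\leq\; \reg^{q}(\mathscr{L}^{\otimes m}) + \reg^{q'}(\mathscr{F}) \;\leq\; \reg^{q}(\mathscr{L}^{\otimes m}) + N.
\]
Setting $m_N := m_1(-N)$, for every $m \geq m_N$ we obtain $\reg^{q+q'}(\mathscr{L}^{\otimes m}\otimes\mathscr{F}) \leq 0$, so that $\mathscr{L}^{\otimes m}\otimes\mathscr{F}$ is itself $(q+q')$-regular. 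Crucially, $m_N$ depends only on $N$ and not on the particular sheaf $\mathscr{F}$.

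Finally, I would observe that $(q+q')$-regularity of a coherent sheaf $\mathscr{G}$ forces $H^{i}(X,\mathscr{G})=0$ for all $i > q+q'$: iterated application of Lemma \ref{lemma: regularity} shows $\mathscr{G}(k)$ is $(q+q')$-regular for every $k \geq 0$, and specializing the defining vanishing to the twist $k=j$ yields $H^{q+q'+j}(X,\mathscr{G}) = 0$ for $1 \leq j \leq n-q-q'$, while higher cohomology vanishes by dimension. Applied to $\mathscr{G} = \mathscr{L}^{\otimes m}\otimes\mathscr{F}$ this completes the argument. The only real obstacle is the uniformity in the first step, which is precisely what Lemma \ref{lemma: Ottem} delivers since only finitely many fixed twists enter the definition of $\reg^{q}$; everything else is a formal consequence of the subadditivity theorem.
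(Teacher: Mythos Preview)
Your proof is correct and takes a genuinely different, more streamlined route than the paper. The paper argues directly via Lemma~\ref{lemma: vanishing of tensor product}: it sets $\mathscr{E}=\mathscr{L}^{\otimes m}\otimes\mathscr{O}_X(-N)$ and replaces $\mathscr{F}$ by $\mathscr{F}\otimes\mathscr{O}_X(N)$, then runs through the same three-case analysis ($b>q$ with $a<n+b$; $b>q$ with $a\geq n+b$; $b\leq q$) that already appeared in the proof of Theorem~\ref{theorem: subadditivity}. You instead observe that this case analysis has already been packaged once and for all into the subadditivity inequality, so the theorem reduces to (i) the uniform bound $\reg^q(\mathscr{L}^{\otimes m})\leq -N$ for $m\gg 0$, (ii) one invocation of Theorem~\ref{theorem: subadditivity} with $l=q$, and (iii) the elementary fact that $(q{+}q')$-regularity of $\mathscr{G}$ forces $H^i(X,\mathscr{G})=0$ for $i>q+q'$. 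What your approach buys is modularity and the elimination of a repeated argument; what the paper's direct approach buys is that it makes the role of the resolution of the diagonal visible in the proof itself rather than hidden inside the black box of subadditivity. One cosmetic point: for step~(i) you cite Lemma~\ref{lemma: Ottem}, which as stated covers only twists $\mathscr{O}_X(-l)$ with $l\geq 0$; this is harmless since without loss of generality $N\geq 0$ (a smaller $N$ only strengthens the hypothesis on $\mathscr{F}$), but you could equally well just invoke the definition of $q$-ampleness applied to the finitely many sheaves $\mathscr{O}_X(-N-j)$, $1\leq j\leq n-q$.
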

\begin{proof}
Fix an integer $i$ such that $q+q'< i \leq n$.
By Lemma \ref{lemma: vanishing of tensor product}, it is enough to show that there is a positive integer $M$, depending only on the choice of $N$, but not the coherent sheaf $\mathscr{F}$, such that for $m>M$, 
$0\leq a \leq 2n-i$ and $b\geq 0$,
\[
H^{b}(X,\mathscr{L}^{\otimes m}\otimes \mathscr{O}_{X}(-N)\otimes \mathscr{R}_{a})=0
\]
or 
\[
H^{i+a-b}\bigl(X,\mathscr{F}\otimes\mathscr{O}_{X}(N-a)\bigr).
\]

\setcounter{case}{0}
\begin{case}
$b> q$ and $0\leq a < n+b$.
\end{case}
Using the $q$-ampleness of $\mathscr{L}$, there is an $m_N$, such that we have
\[
H^{q+j}\bigl(X,\mathscr{L}^{\otimes m}\otimes \mathscr{O}_{X}(-N-j)\bigr)=0
\]
for all $1\leq j\leq n-q$ and $m>m_N$,
i.e. $\mathscr{L}^{\otimes m}\otimes \mathscr{O}_X(-N)$ is $(0,q)$-regular for all $m>m_N$. 
Now Lemma \ref{lemma: R_i} says
\[
H^{b}\bigl(X,\mathscr{L}^{\otimes m}\otimes \mathscr{O}_{X}(-N)\otimes \mathscr{R}_{a}\bigr)=0
\]
for all $m>m_N$, $b> q$ and $a< n+b$.

\begin{case}
$b>q$ and $n+b\leq a \leq 2n-i$.
\end{case}
Since $i+a-b\geq i+n > n=\dim X$,
\[
H^{i+a-b}\bigl(X,\mathscr{F}\otimes\mathscr{O}_{X}(N-a)\bigr)=0.
\]

\begin{case}
$0\leq b\leq q$ and $0\leq a\leq 2n-i$.
\end{case}
By the partial regularity assumption of $\mathscr{F}$,
the fact that $i-b>q'$
and Lemma \ref{lemma: regularity},
\[
H^{(i-b)+a}\bigl(X,\mathscr{F}\otimes\mathscr{O}_{X}(N-a)\bigr)=0.
\]

This proves the theorem.
\end{proof}

\begin{lemma}\label{lemma: reg of nef}
There is an $N$ such that $\reg^{0}(\mathscr{P})\leq N$ for any nef line bundle $\mathscr{P}$ on $X$.
\end{lemma}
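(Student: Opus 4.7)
The plan is to unwind the definition of $0$-regularity and reduce the statement to the classical Fujita vanishing theorem. By Definition \ref{def: reg}, $\reg^0(\mathscr{P}) \leq N$ means that $\mathscr{P} \otimes \mathscr{O}_X(N)$ is $0$-regular, i.e.
\[
H^i(X, \mathscr{P} \otimes \mathscr{O}_X(N-i)) = 0 \quad \text{for } 1 \leq i \leq n,
\]
and one must produce a single $N$ that works uniformly as $\mathscr{P}$ ranges over all nef line bundles on $X$.

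First I would apply the classical Fujita vanishing theorem to the coherent sheaf $\mathscr{F} = \mathscr{O}_X$ together with the ample line bundle $\mathscr{O}_X(1)$ (recall that $\mathscr{O}_X(1)$ is $2n$-Koszul-ample and hence in particular ample). This yields an integer $m_0$, depending only on $X$ and on the polarization, such that
\[
H^i(X, \mathscr{O}_X(m) \otimes \mathscr{P}) = 0 \quad \text{for every } i > 0,\ m \geq m_0, \text{ and every nef } \mathscr{P}.
\]
Setting $N := m_0 + n$ then forces $N - i \geq m_0$ for each $1 \leq i \leq n$, so substituting $m = N-i$ in the preceding display recovers precisely the cohomological conditions defining $0$-regularity of $\mathscr{P}(N)$.

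The only conceptual step is this reduction to Fujita vanishing, and there is no genuine obstacle: Fujita's theorem was designed to supply exactly this kind of uniform bound as $\mathscr{P}$ varies over the nef cone. It is worth stressing that invoking classical Fujita vanishing here is legitimate even though the broader goal of the paper is to generalize it to the $q$-ample setting, because the generalization (Theorem \ref{theorem: fujita}) will be obtained by feeding this lemma into the uniform vanishing Theorem \ref{theorem: unif}; thus only the ample-divisor version of Fujita is used as input, avoiding any circularity.
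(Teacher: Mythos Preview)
Your proof is correct and takes essentially the same approach as the paper: both reduce the statement directly to the classical Fujita vanishing theorem applied to $\mathscr{O}_X$ and the ample line bundle $\mathscr{O}_X(1)$. The paper's proof is just a terser version of yours, asserting the existence of $N$ with $H^a(X,\mathscr{O}_X(N-a)\otimes\mathscr{P})=0$ for $a>0$ without spelling out the intermediate $m_0$ and the shift $N=m_0+n$.
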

\begin{proof}
By the Fujita vanishing theorem, there is an $N$ such that
\[
H^{a}\bigl(X,\mathscr{O}_X(N-a)\otimes \mathscr{P}\bigr)=0
\]
for $a>0$ and any nef line bundle $\mathscr{P}$.
\end{proof}

We may now prove a Fujita-type vanishing theorem for $q$-ample divisors.
Note that we do not assume $\mathscr{O}(Z)$ is a field in the following.
\begin{theorem}[Fujita-type vanishing theorem for $q$-ample divisors]\label{theorem: fujita}
Let $Z$ be a projective scheme of dimension $n$ over $k$,
$\mathscr{L}_j$ be $q_j$-ample line bundles on $Z$, 
$1\leq j\leq k$ and $\mathscr{F}$ be a coherent sheaf on $Z$. 
Then for any $(k-1)$-tuple $(M_2,\cdots,M_k)\in \mathbb{Z}^{k-1}$, there is a positive integer $M_1$, such that
\[
H^{i}(Z,\mathscr{F}\otimes\mathscr{L}_1^{\otimes m_1}\otimes \mathscr{L}_2^{\otimes m_2}\otimes\cdots\otimes \mathscr{L}_k^{\otimes m_k}\otimes \mathscr{P})=0
\]
for $i>\sum_{j=1}^k q_j$, $m_j\geq M_j$, where $1\leq j\leq k$, and any nef line bundle $\mathscr{P}$ on $Z$.
\end{theorem}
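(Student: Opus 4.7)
The plan is to first reduce to the case where $Z$ is integral (so that $\mathscr{O}(Z)$ is a field and the $2n$-Koszul-ample setup of this section applies), and then to induct on $k$.

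For the reduction, I would use Noetherian induction on closed subschemes of $Z$ together with a d\'evissage of $\mathscr{F}$: any coherent $\mathscr{F}$ admits a finite filtration whose successive quotients are pushforwards of coherent sheaves from integral closed subschemes of $Z$. Since $q$-ampleness (and nefness) restrict to closed subschemes \cite[Proposition 2.3]{Ottem}, the long exact sequences associated with this filtration reduce the claim to the integral case.

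For the base case $k=1$, assume $Z$ is integral. By Lemma \ref{lemma: reg of nef} there is a single integer $N_0$ with $\reg^{0}(\mathscr{P})\le N_0$ for every nef line bundle $\mathscr{P}$. Applying subadditivity of partial regularity (Theorem \ref{theorem: subadditivity}) to the locally free sheaf $\mathscr{P}$ and the coherent sheaf $\mathscr{F}$ (with $q=l=0$) gives
\[
\reg^{0}(\mathscr{F}\otimes \mathscr{P}) \le \reg^{0}(\mathscr{P}) + \reg^{0}(\mathscr{F}) \le N_0 + \reg^{0}(\mathscr{F}) =: N,
\]
a bound uniform in $\mathscr{P}$. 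Theorem \ref{theorem: unif}, invoked with $q'=0$ and parameter sheaf $\mathscr{F}\otimes \mathscr{P}$, then produces the desired $M_1$.

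For the inductive step from $k-1$ to $k$, fix $M_2,\ldots,M_k$, choose any $N_0\in\mathbb{Z}$, and set $q':=\sum_{j\ge 2}q_j$. The strategy is to apply Theorem \ref{theorem: unif} to $\mathscr{L}_1$ with parameter sheaf
\[
\mathscr{F}' := \mathscr{F}\otimes \mathscr{L}_2^{\otimes m_2}\otimes \cdots \otimes \mathscr{L}_k^{\otimes m_k}\otimes \mathscr{P}.
\]
This requires a uniform bound $\reg^{q'}(\mathscr{F}')\le N_0$, which unwinds to the vanishing of
\[
H^{q'+i}\bigl(Z,\ \mathscr{F}(N_0-i)\otimes \mathscr{L}_2^{\otimes m_2}\otimes \cdots \otimes \mathscr{L}_k^{\otimes m_k}\otimes \mathscr{P}\bigr)
\]
for $1\le i\le n-q'$. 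For each such $i$, the inductive hypothesis applied to the coherent sheaf $\mathscr{F}(N_0-i)$, the $(k-1)$-tuple $(\mathscr{L}_2,\ldots,\mathscr{L}_k)$ and the given lower bounds $(M_3,\ldots,M_k)$ yields an $M_2^{(i)}$ forcing this cohomology to vanish once $m_2\ge M_2^{(i)}$ (with $\mathscr{L}_2$ playing the special role). Setting $M_2^{\ast}:=\max_i M_2^{(i)}$, Theorem \ref{theorem: unif} then supplies an $M_1^{\mathrm{large}}$ valid whenever $m_2\ge M_2^{\ast}$.

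The main technical obstacle is that typically $M_2^{\ast}>M_2$, leaving the finite gap $\{m_2\in\mathbb{Z}: M_2\le m_2 < M_2^{\ast}\}$ uncovered. I resolve this with a second, different use of the inductive hypothesis: for each $m_2$ in this finite gap, view $\mathscr{F}\otimes \mathscr{L}_2^{\otimes m_2}$ as a coherent sheaf and apply the inductive hypothesis to the $(k-1)$-tuple $(\mathscr{L}_1,\mathscr{L}_3,\ldots,\mathscr{L}_k)$ with bounds $(M_3,\ldots,M_k)$, now with $\mathscr{L}_1$ as the special variable; this produces a constant $M_1^{(m_2)}$. Since $i>\sum_{j=1}^{k}q_j$ implies $i>q_1+\sum_{j\ge 3}q_j$, the vanishing this delivers is exactly what we need. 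Setting $M_1:=\max\bigl(M_1^{\mathrm{large}},\ \max_{m_2} M_1^{(m_2)}\bigr)$ closes the induction.
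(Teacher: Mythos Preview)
Your argument is correct, but the paper's proof takes a shorter, non-inductive route that is worth comparing. The key simplification there is the elementary observation that for each $j\ge 2$ the partial regularity $\reg^{q_j}(\mathscr{L}_j^{\otimes m_j})$ is bounded above by a single constant $N_j$ uniformly in $m_j\ge M_j$: for $m_j\gg 0$ the $q_j$-ampleness of $\mathscr{L}_j$ gives $\reg^{q_j}(\mathscr{L}_j^{\otimes m_j})\le 0$, and the finitely many remaining values $M_j\le m_j<(\text{large})$ each contribute a finite regularity. One then applies the subadditivity theorem (Theorem \ref{theorem: subadditivity}) iteratively, together with Lemma \ref{lemma: reg of nef}, to bound $\reg^{\sum_{j\ge 2}q_j}\bigl(\mathscr{F}\otimes\bigotimes_{j\ge 2}\mathscr{L}_j^{\otimes m_j}\otimes\mathscr{P}\bigr)$ by $\reg(\mathscr{F})+\sum_{j\ge 2}N_j+N$, uniformly in all the $m_j$ and in $\mathscr{P}$. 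A single invocation of Theorem \ref{theorem: unif} for $\mathscr{L}_1$ then finishes.

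Your induction on $k$ recovers this same uniform regularity bound indirectly via the inductive hypothesis, at the cost of the gap-filling step (the finite set $M_2\le m_2<M_2^\ast$) and a second, differently-shaped use of the induction. Both approaches ultimately rest on Theorems \ref{theorem: subadditivity} and \ref{theorem: unif} and Lemma \ref{lemma: reg of nef}; the paper's direct argument just packages the uniformity more efficiently. Your reduction to the integral case via d\'evissage is also fine and slightly finer than needed: the paper only reduces to $Z$ connected and reduced (so that $\mathscr{O}(Z)$ is a field) by filtering $\mathscr{F}$ by powers of the nilradical.
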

\begin{proof}
Clearly, we may assume that $Z$ is connected.
It suffices to prove the theorem assuming that $Z$ is also reduced.
To see this, let $\mathscr{N}$ be the nilradical ideal sheaf of $Z$.
We then chase through the following exact sequence:
\begin{multline*}
0\rightarrow
\mathscr{N}^{e+1}\cdot\mathscr{F}\otimes\mathscr{L}_1^{\otimes m_1}\otimes \mathscr{L}_2^{\otimes m_2}\otimes\cdots\otimes \mathscr{L}_k^{\otimes m_k}\otimes \mathscr{P}\\
\rightarrow
\mathscr{N}^{e}\cdot\mathscr{F}\otimes\mathscr{L}_1^{\otimes m_1}\otimes \mathscr{L}_2^{\otimes m_2}\otimes\cdots\otimes \mathscr{L}_k^{\otimes m_k}\otimes \mathscr{P}\\
\rightarrow
(\mathscr{N}^{e}\cdot\mathscr{F}/\mathscr{N}^{e+1}\cdot\mathscr{F})\otimes\mathscr{L}_1^{\otimes m_1}\otimes \mathscr{L}_2^{\otimes m_2}\otimes\cdots\otimes \mathscr{L}_k^{\otimes m_k}\otimes \mathscr{P}
\rightarrow 0.
\end{multline*}
Note that $(\mathscr{N}^{e}\cdot\mathscr{F}/\mathscr{N}^{e+1}\cdot\mathscr{F})\otimes\mathscr{L}_1^{\otimes m_1}\otimes \mathscr{L}_2^{\otimes m_2}\otimes\cdots\otimes \mathscr{L}_k^{\otimes m_k}\otimes \mathscr{P}$ is a coherent sheaf on $Z_{red}$, which is the reduction of $Z$,
and that $\mathscr{N}^{e}=0$ for $e\gg0$.

Since we now assume that $Z$ is connected and reduced, $\mathscr{O}(Z)$ is a field.
We may apply the previous results in this section on $Z$.
We fix a $2n$-Koszul-ample line bundle $\mathscr{O}_Z(1)$ on $Z$ and by $\reg^q$, we refer to partial regularity with respect to $\mathscr{O}_Z(1)$.

Since $\mathscr{L}_j$ is $q_j$-ample, 
\[
H^{q_j+a}\bigl(Z,\mathscr{L}_j^{\otimes m_j}\otimes\mathscr{O}(-a)\bigr)=0
\]
for $m_j\gg0$ and $1\leq a \leq n-q_j$.
This says $\reg^{q_j}(\mathscr{L}_j^{\otimes m_j})\leq 0$ for all $m_j\gg0$.
Therefore, there exist $N_j$'s such that 
$\reg^{q_j}(\mathscr{L}_j^{\otimes m_j})\leq N_j$ for all $m_j\geq M_j$.
We now apply Theorem \ref{theorem: subadditivity} 
and Lemma \ref{lemma: reg of nef}
to see that
\[
\reg^{\sum_{j=2}^k q_j}(\mathscr{F}\otimes \mathscr{L}_2^{\otimes m_2}\otimes\cdots\otimes \mathscr{L}_k^{\otimes m_k}\otimes \mathscr{P})\leq \reg^0(\mathscr{F})+\sum_{j=2}^k N_j + N
\]
for all $m_j\geq M_j$, where $2\leq j\leq k$ and $N$ is the bound mentioned in Lemma \ref{lemma: reg of nef}.
Finally, we apply Theorem \ref{theorem: unif} to get the desired result. 
\end{proof}

Let us introduce the notion of $q$-almost ample divisors which generalizes the notion of nef divisors.

\begin{definition}[$q$-almost ample]
Let $X$ be a projective scheme, $D$ an $\mathbf{R}$-Cartier $\mathbf{R}$-divisor on $X$ and $A$ an ample divisor on $X$.
We say that $D$ is \textit{$q$-almost ample} if $D+\epsilon A$ is $q$-ample for $0<\epsilon\ll 1$.
\end{definition}

The definition is clearly independent of the choice of $A$.
Note that $D$ is $0$-almost ample if and only if $D$ is nef.

In the following proposition, we only look at the vanishing of the top cohomology.
Note that we allow all but one of the line bundles in question to be $q_i$-almost ample (Compare with Theorem \ref{theorem: fujita}).

\begin{proposition}\label{lemma: uniform2}
Let $Z$ be a projective scheme of dimension $n$ over $k$,
$\mathscr{L}$ be a $q$-ample line bundle on $Z$ and $\mathscr{F}$ be a coherent sheaf on $Z$.
Then there is a positive integer $M$,
such that given any finite set of line bundles $\{\mathscr{P}_i\}_{1\leq i\leq k}$,
with each line bundle being $q_i$-almost ample and $q+\sum_i q_i\leq n-1$,
\[
H^{n}(Z,\mathscr{F}\otimes\mathscr{L}^{\otimes m}\otimes \bigotimes_{i=1}^{k}\mathscr{P}_i)=0
\]
for $m\geq M$.
\end{proposition}
\begin{proof}
Let us first reduce to the case where $Z$ is integral.
Indeed, arguing as in the proof of Theorem \ref{theorem: fujita}, we may assume $Z$ is reduced.
Suppose $Z=\bigcup_{i=1}^{k}Z_i$, where $Z_i$ are the irreducible components of $Z$.
Let $\mathscr{I}$ be the ideal sheaf of $Z_1\subset Z$.
Consider the short exact sequence
\[
0\rightarrow
\mathscr{I}\cdot \mathscr{F}
\rightarrow
\mathscr{F}
\rightarrow
\mathscr{F}/\mathscr{I}\cdot \mathscr{F}
\rightarrow 0.
\]
Note that $\mathscr{I}\cdot \mathscr{F}$ and $\mathscr{F}/\mathscr{I}\cdot \mathscr{F}$ are supported on $\bigcup_{i=2}^k Z_i$ and $Z_1$ respectively.
We then tensor the above short exact sequence with $\mathscr{L}^{\otimes m}\otimes \bigotimes_{i=1}^{k}\mathscr{P}_i$
and induct on the number of irreducible components of $Z$.
Therefore, we may assume that $Z$ is irreducible as well.

Fix an ample line bundle $\mathscr{O}_Z(1)$ on $Z$.
We can find a surjection
$\oplus\mathscr{O}_Z(a)\twoheadrightarrow\mathscr{F}$, for some $a\in\mathbb{Z}$.
Thus, it suffices to only consider the case where $\mathscr{F}$ is a line bundle $\mathscr{M}$.
Let $\omega_Z$ be the dualizing sheaf of $Z$ \cite[III.7]{AG}.
We have
\[
H^{n}(Z,\mathscr{M}\otimes\mathscr{L}^{\otimes m}\otimes \bigotimes_{i=1}^{k}\mathscr{P}_i)
\cong
H^{0}(Z,\mathscr{M}^{\vee}\otimes\mathscr{L}^{\otimes -m}\otimes \bigotimes_{i=1}^{k}\mathscr{P}^{\vee}_i\otimes\omega_Z)^{\vee}.
\]
We can embed $\omega_Z\hookrightarrow\mathscr{O}(j)$ \cite[Proof of Theorem 9.1]{Totaro}.
This reduces us to proving the vanishing of the cohomology group
$H^{0}(Z,\mathscr{M}^{\vee}\otimes\mathscr{O}(j)\otimes\mathscr{L}^{\otimes -m}\otimes \bigotimes_{i=1}^{k}\mathscr{P}^{\vee}_i)$.
We may find a positive integer $M$ such that $\mathscr{L}^{\otimes m}\otimes\mathscr{M}\otimes\mathscr{O}(-j)$ is $q$-ample for all $m\geq M$, by the openness property of the $q$-ample cone (Theorem \ref{thm: open}).
By the convexity property (Theorem \ref{thm: open}),
\[
\bigl(\mathscr{L}^{\otimes m}\otimes\mathscr{M}\otimes\mathscr{O}(-j)\bigr)
\otimes
\bigotimes_{i=1}^{k}\mathscr{P}_i
\textit{ is }
q+\sum_i q_i
\textit{-ample},
\]
hence $(n-1)$-ample, for $m\geq M$.
By Theorem \ref{theorem: n-1 ample}, 
\[
\mathscr{L}^{\otimes -m}\otimes\mathscr{M}^{\vee}\otimes\mathscr{O}(j)
\otimes
\bigotimes_{i=1}^{k}\mathscr{P}^{\vee}_i
=
\bigl(\mathscr{L}^{\otimes m}\otimes\mathscr{M}\otimes\mathscr{O}(-j)
\otimes
\bigotimes_{i=1}^{k}\mathscr{P}_i)^{\vee}
\]
is not pseudoeffective for $m\geq M$.
Therefore, it cannot have any global sections.
\end{proof}

\section{Nef subschemes}
In this section, we shall define the notion of nef subschemes.
We shall show that ampleness and nefness are transitive properties:
If $Z$ is an ample (resp. nef) subscheme of $Y$ and $Y$ is an ample (resp. nef) subscheme of $X$,
then $Z$ is an ample (resp. nef) subscheme of $X$ (Theorems \ref{theorem: trans ample} and \ref{theorem: trans nef}).
We shall study nef subvarieties more in sections \ref{s6} and \ref{s7}.

Ottem observed that ampleness of a vector bundle $\mathscr{E}$ can be expressed in terms of partial ampleness of $\mathscr{O}_{\mathbb{P}(\mathscr{E}^{\vee})}(-1)$ \cite[Proposition 4.1]{Ottem}.
We give the straightforward generalization to the case where the vector bundle is nef.
\begin{proposition}\label{prop: vector bundle}
Let $\mathscr{E}$ be a vector bundle of rank $r$ on a projective scheme $X$.
Then $\mathscr{E}$ is ample (resp. nef) if and only if $\mathscr{O}_{\mathbb{P}(\mathscr{E}^{\vee})}(-1)$ is $(r-1)$-ample.
(resp. $(r-1)$-almost ample.)
\end{proposition}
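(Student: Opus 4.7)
The plan is to deduce the nef case from Ottem's Proposition 4.1 (the ample case) by a perturbation argument, using the openness and positive homogeneity of the $q$-ample cone (Theorem \ref{thm: open}).

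The key translation is that for any line bundle $\mathscr{L}$ on $X$, the canonical isomorphism $\mathbb{P}((\mathscr{E}\otimes\mathscr{L})^\vee)\cong Y:=\mathbb{P}(\mathscr{E}^\vee)$ carries $\mathscr{O}(-1)$ to $\mathscr{O}_Y(-1)\otimes\pi^*\mathscr{L}$. Applied to Ottem's Proposition 4.1 on $\mathscr{E}\otimes\mathscr{L}$, this yields
\[
\mathscr{E}\otimes\mathscr{L} \text{ is ample} \iff \mathscr{O}_Y(-1)+\pi^*[\mathscr{L}] \text{ is }(r-1)\text{-ample},
\]
which extends to $\mathbf{R}$-classes of $\mathscr{L}$ after clearing denominators (passing to $\mathrm{Sym}^{q}\mathscr{E}$) and invoking openness of the ample and $(r-1)$-ample cones.

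Fix an ample divisor $A$ on $X$. The standard characterization of nef vector bundles says that $\mathscr{E}$ is nef iff $\mathscr{O}_{\mathbb{P}(\mathscr{E})}(1)+t\pi^*A$ is $\mathbf{R}$-ample for every (equivalently, every sufficiently small) $t>0$; through the equivalence above, this becomes the assertion that $\mathscr{O}_Y(-1)+t\pi^*A$ is $(r-1)$-ample for every small $t>0$. To recognize this as $(r-1)$-almost ampleness of $\mathscr{O}_Y(-1)$, pick an ample class on $Y$ of the form $H=\mathscr{O}_Y(1)+k\pi^*A$ with $k\gg 0$, and use the identity
\[
\mathscr{O}_Y(-1)+\epsilon H=(1-\epsilon)\mathscr{O}_Y(-1)+\epsilon k\pi^*A
\]
together with positive homogeneity of $(r-1)$-ampleness (Theorem \ref{thm: open}) to see that the LHS is $(r-1)$-ample for small $\epsilon>0$ iff $\mathscr{O}_Y(-1)+\frac{\epsilon k}{1-\epsilon}\pi^*A$ is; as $\epsilon$ ranges over a small positive interval, so does $\frac{\epsilon k}{1-\epsilon}$, so the two conditions line up.

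The main technical obstacle I anticipate is the $\mathbf{R}$-extension of Ottem's Proposition 4.1, namely making sense of $(r-1)$-ampleness of $\mathscr{O}_Y(-1)+\pi^*[\mathscr{L}]$ for an $\mathbf{R}$-divisor class $\mathscr{L}$ and matching it back to ampleness of a vector bundle through $\mathrm{Sym}^{q}\mathscr{E}$; once this and the bookkeeping of perturbation parameters are granted, the proposition reduces to the displayed algebraic identity.
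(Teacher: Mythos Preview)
Your perturbation strategy is natural, and the bookkeeping in the second half (relating $\mathscr{O}_Y(-1)+\epsilon H$ to $\mathscr{O}_Y(-1)+t\pi^*A$ via the identity and Theorem~\ref{thm: open}) is fine. The real problem is exactly the one you flag: the $\mathbf{R}$-extension of Ottem's Proposition~4.1, and your proposed fix via $\Sym^q\mathscr{E}$ does not work.

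Concretely, to run your argument you need ``$\mathscr{O}_{\mathbb{P}(\mathscr{E})}(1)+t\pi^*A$ ample $\iff$ $\mathscr{O}_Y(-1)+t\pi'^*A$ is $(r-1)$-ample'' for small \emph{rational} $t=p/q$, i.e.\ ``$\mathscr{O}_{\mathbb{P}(\mathscr{E})}(q)+p\pi^*A$ ample $\iff$ $\mathscr{O}_Y(-q)+p\pi'^*A$ is $(r-1)$-ample''. Passing to $\Sym^q\mathscr{E}$ does not deliver this: first, ampleness of $\mathscr{O}_{\mathbb{P}(\mathscr{E})}(q)+p\pi^*A$ is \emph{not} equivalent to ampleness of $\Sym^q\mathscr{E}\otimes\mathscr{O}(pA)$ (only one implication holds, via restriction along the Veronese); second, Ottem's Proposition~4.1 applied to $\Sym^q\mathscr{E}\otimes\mathscr{O}(pA)$ yields $(r_q-1)$-ampleness on $\mathbb{P}((\Sym^q\mathscr{E})^\vee)$, where $r_q=\binom{q+r-1}{r-1}$, which is a statement on the wrong variety with the wrong index. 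Restricting back along the Veronese only gives that $\mathscr{O}_Y(-q)+p\pi'^*A$ is $(r_q-1)$-ample, strictly weaker than $(r-1)$-ample. Nor can you avoid the issue by working only with integral twists: knowing $\mathscr{O}_Y(-1)+\pi'^*A$ is $(r-1)$-ample for every ample \emph{line bundle} $A$ does not let you shrink the perturbation toward $\mathscr{O}_Y(-1)$, since $\pi'^*A$ is never small in $\N^1(Y)_\mathbf{R}$ and subtracting nef classes from $(r-1)$-ample classes is not allowed.

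The paper's proof sidesteps this entirely. Rather than treating the ample case as a black box, it writes down the Serre-duality/Leray isomorphism
\[
H^{r-1+i}\bigl(\mathbb{P}(\mathscr{E}^\vee),\mathscr{O}(-m-r)\otimes\pi'^*\mathscr{F}'\bigr)\;\cong\;H^{i}\bigl(\mathbb{P}(\mathscr{E}),\mathscr{O}(m)\otimes\pi^*\mathscr{F}\bigr)
\]
valid for \emph{all} $m>0$, and then checks each implication directly via Lemma~\ref{lemma: Ottem}: for the nef direction one tests $\mathscr{O}_{\mathbb{P}(\mathscr{E})}(k)\otimes\pi^*\mathscr{O}(A)$ (resp.\ $\mathscr{O}_{\mathbb{P}(\mathscr{E}^\vee)}(-k)\otimes\pi'^*\mathscr{O}(A)$) for all $k>0$. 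The point is that the cohomological bridge already handles arbitrary powers $\mathscr{O}(m)$, which is precisely the ``clearing denominators'' step you need but cannot get from Proposition~4.1 alone.
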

\begin{proof}
Let $\pi': \mathbb{P}(\mathscr{E}^{\vee})\rightarrow X$ and $\pi: \mathbb{P}(\mathscr{E})\rightarrow X$ be the natural projections.
Using \cite[Exercise III.8.4]{AG}, we have for $m>0$,
\[
R^{j}\pi'_{*}\mathscr{O}_{\mathbb{P}(\mathscr{E}^{\vee})}(-m-r)
\cong
\left\{
\begin{array}{ll}
\Sym^{m}\mathscr{E}\otimes\det(\mathscr{E}) & \text{ for } j=r-1;\\
0 & \text{ otherwise.}
\end{array}
\right.
%
\]
Here we implicitly used the fact that 
$(\Sym^{m}\mathscr{E}^{\vee})^{\vee}$ and $\Sym^{m}\mathscr{E}$ are isomorphic when the ground field is of characteristic $0$.

Therefore, we have the following isomorphisms
\begin{multline}\label{eq: vb}
H^{r-1+i}\bigl(\mathbb{P}(\mathscr{E}^{\vee}),\mathscr{O}_{\mathbb{P}(\mathscr{E}^{\vee})}(-m-r)\otimes \pi{'}^{*}(\mathscr{F}\otimes \det \mathscr{E}^{\vee})\bigr)
\cong
H^{i}(X,\Sym^{m}\mathscr{E}\otimes \mathscr{F})\\
\cong
H^{i}(\mathbb{P}(\mathscr{E}),\mathscr{O}_{\mathbb{P}(\mathscr{E})}(m)\otimes\pi^{*}\mathscr{F}),
\end{multline}
where $\mathscr{F}$ is locally free on $X$, $i>0$ and $m>0$.

Throughout the proof, in order to apply Lemma \ref{lemma: Ottem}, we fix a non-negative integer $l$. 
Choose a sufficiently ample divisor $A$ on $X$,
such that $\mathscr{O}_{\mathbb{P}(\mathscr{E})}(1)\otimes\pi^{*}\mathscr{O}(A)$ 
and $\mathscr{O}_{\mathbb{P}(\mathscr{E}^{\vee})}(1)\otimes\pi'^{*}\mathscr{O}(A)$ are ample.

If $\mathscr{O}_{\mathbb{P}(\mathscr{E}^{\vee})}(-1)$ is $(r-1)$-ample, 
then $\mathscr{O}_{\mathbb{P}(\mathscr{E})}(1)$ is ample, by (\ref{eq: vb}). 
Indeed, any line bundle on $\mathbb{P}(\mathscr{E})$ can be expressed as $\pi^{*}\mathscr{L}\otimes\mathscr{O}_{\mathbb{P}(\mathscr{E})}(a)$, for some line bundle $\mathscr{L}$ on $X$ and $a\in\mathbb{Z}$.

Suppose $\mathscr{O}_{\mathbb{P}(\mathscr{E}^{\vee})}(-1)$ is $(r-1)$-almost ample.
Fix a positive integer $k$.
Observe that we have the following isomorphism given by (\ref{eq: vb}):
\begin{multline*}
H^{i}\biggl(\mathbb{P}(\mathscr{E}), \mathscr{O}_{\mathbb{P}(\mathscr{E})}(mk-l)\otimes\pi^{*}\mathscr{O}\bigl((m-l)A\bigr)\biggr)\\
\cong
H^{r-1+i}\biggl(\mathbb{P}(\mathscr{E}^{\vee}),\mathscr{O}_{\mathbb{P}(\mathscr{E}^{\vee})}(-mk-r+l)\otimes \pi{'}^{*}\bigl(\mathscr{O}((m-l)A)\otimes \det \mathscr{E}^{\vee}\bigr)\biggr),
\end{multline*}
where $i,m>0$.
The latter term vanishes for $m\gg 0$ since
$\mathscr{O}_{\mathbb{P}(\mathscr{E}^{\vee})}(-k)\otimes\pi{'}^{*}\mathscr{O}(A)$ is $(r-1)$-ample.
This shows that
$\mathscr{O}_{\mathbb{P}(\mathscr{E})}(k)\otimes \pi{'}^{*}\mathscr{O}(A)$ is ample.
Since $\mathscr{O}_{\mathbb{P}(\mathscr{E})}(k)\otimes \pi{'}^{*}\mathscr{O}(A)$ is ample for any positive integer $k$, $\mathscr{O}_{\mathbb{P}(\mathscr{E})}(1)$ is nef.


Now assume that $\mathscr{E}$ is ample.
By (\ref{eq: vb}), we have the following isomorphism of cohomology groups:
\begin{multline*}
H^{r-1+i}\bigl(\mathbb{P}(\mathscr{E}^{\vee}),\mathscr{O}_{\mathbb{P}(\mathscr{E}^{\vee})}(-m-l)\otimes \pi{'}^{*}\mathscr{O}(-lA)\bigr)\\
\cong
H^{i}\biggl(\mathbb{P}(\mathscr{E}), \mathscr{O}_{\mathbb{P}(\mathscr{E})}(m+l-r)\otimes\pi^{*}\bigl(\mathscr{O}(-lA)\otimes\det\mathscr{E}\bigr)\biggr).
\end{multline*}
The latter term vanishes for $i>0$ and $m\gg0$, which implies that $\mathscr{O}_{\mathbb{P}(\mathscr{E}^{\vee})}(-1)$ is $(r-1)$-ample.

Assume that $\mathscr{E}$ is nef.
Fix a positive integer $k$.
We have the following isomorphism of cohomology groups:
\begin{multline*}
H^{r-1+i}\biggl(\mathbb{P}(\mathscr{E}^{\vee}),\mathscr{O}_{\mathbb{P}(\mathscr{E}^{\vee})}(-mk-l)\otimes \pi{'}^{*}\mathscr{O}\bigl((m-l)A\bigr)\biggr)\\
\cong
H^{i}\biggl(\mathbb{P}(\mathscr{E}), \mathscr{O}_{\mathbb{P}(\mathscr{E})}(mk+l-r)\otimes\pi^{*}\bigl(\mathscr{O}((m-l)A)\otimes\det\mathscr{E}\bigr)\biggr),
\end{multline*}
for $i,m>0$. The latter term vanishes for $m\gg0$. 
This implies that $\mathscr{O}_{\mathbb{P}(\mathscr{E}^{\vee})}(-k)\otimes\mathscr{O}(A)$ is $(r-1)$-ample, for any positive integer $k$.
It follows that $\mathscr{O}_{\mathbb{P}(\mathscr{E}^{\vee})}(-1)$ is $(r-1)$-almost ample.
\end{proof}

The augmented base locus gives us another measure of how far a divisor is from being ample. 
\begin{definition}[Augmented base locus {\cite[Definition 1.2]{ELMNP}}]\label{def: aug}
The augmented base locus of an $\mathbf{R}$-Cartier $\mathbf{R}$-divisor D on
X is the Zariski-closed subset:
\[
\mathbf{B}_{+}(D)=\bigcap_{D=A+E} \Supp E,
\]
where the intersection is taken over all decompositions $D=A+E$ with $A$ being ample and $E$ being effective.
\end{definition}

\begin{proposition}
\label{prop: exceptional}
Let $Y\subset X$ be a subscheme of codimension $r$.
Then the normal bundle of the exceptional divisor $E$ in $\Bl_Y X$, $\mathscr{O}_E(E)$ is $(r-1)$-almost ample
if and only if
$E\subset \Bl_Y X$ is $(r-1)$-almost ample.
\end{proposition}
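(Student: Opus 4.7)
The reverse implication is immediate: if $\mathscr{O}(E)+\epsilon A$ is $(r-1)$-ample on $\Bl_Y X$, then by the standard restriction property of $q$-ample line bundles (\cite[Proposition 2.3]{Ottem}), its restriction $\mathscr{O}_E(E)+\epsilon A|_E$ to the closed subscheme $E$ is $(r-1)$-ample, so $\mathscr{O}_E(E)$ is $(r-1)$-almost ample.

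For the forward implication, fix a small rational $\epsilon>0$ such that $\mathscr{O}_E(E)+\epsilon A|_E$ is $(r-1)$-ample on $E$, and set $D:=E+\epsilon A$; the goal is to show $D$ is $(r-1)$-ample on $\Bl_Y X$. The plan is to invoke Brown's criterion (cited just before Theorem~\ref{theorem: fujita}): a divisor is $q$-ample if and only if its restriction to its augmented base locus is $q$-ample. Since $D|_E$ is $(r-1)$-ample by hypothesis and restriction to closed subschemes preserves $q$-ampleness, it suffices to establish the containment $\mathbf{B}_{+}(D)\subseteq E$. Using the standard description $\mathbf{B}_{+}(D)=\mathbf{B}(D-\delta A)$ valid for any sufficiently small $\delta>0$, this reduces to showing that the stable base locus $\mathbf{B}(E+(\epsilon-\delta)A)$ is contained in $E$. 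For $m$ sufficiently large with $m(\epsilon-\delta)$ integer, the line bundle $\mathscr{O}(m(\epsilon-\delta)A)$ is globally generated on $\Bl_Y X$ by ampleness of $A$, and multiplying such global sections by the tautological section $s_E^{\,m}\in H^0(\Bl_Y X,\mathscr{O}(mE))$ yields sections of $\mathscr{O}(mD)$ whose joint zero locus is contained in $E$, giving the desired containment.

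An alternative route, using the author's own machinery and avoiding Brown's theorem, goes as follows. By the equivalence of $q$-ampleness and $q$-T-ampleness, it is enough to test cohomological vanishing on the line bundles $\mathscr{O}(-n-i)$. For such a line bundle $\mathscr{F}$, one tensors the short exact sequence $0\to \mathscr{F}(-mE)\to \mathscr{F}\to \mathscr{F}/\mathscr{F}(-mE)\to 0$ with $\mathscr{O}(mD)$: the left term becomes $\mathscr{F}(m\epsilon A)$, whose higher cohomology vanishes for $m\gg 0$ by Serre vanishing, while the quotient carries a filtration with graded pieces $\mathscr{F}|_E\otimes \mathscr{O}_E(\ell\, E|_E+m\epsilon A|_E)$ for $1\le \ell\le m$. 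Applying Theorem~\ref{theorem: fujita} to $\mathscr{L}_1=\mathscr{O}_E(E|_E+\epsilon A|_E)$ (which is $(r-1)$-ample by hypothesis) and $\mathscr{L}_2=\mathscr{O}_E(A|_E)$ (ample), with their roles swapped to handle both small and large $\ell$, yields the required uniform vanishing in degree $>r-1$. The principal obstacle is the same in spirit in either route: one must leverage positivity along $E$ together with ampleness of $A$ to control the cohomology on a full Zariski neighborhood of $E$---either via the augmented base locus in the first route, or via uniform cohomological control across the filtration indices in the second.
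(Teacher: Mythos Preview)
Your first route is exactly the paper's proof: the paper also observes $\mathbf{B}_{+}(E+\epsilon A)\subseteq \Supp E$ (you spell out why, via the stable base locus and the tautological section of $\mathscr{O}(E)$, whereas the paper simply asserts it) and then invokes Brown's theorem \cite[Theorem 1.1]{Brown}. So on the main line there is nothing to add.

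Your second route is a genuine and correct alternative that avoids Brown's theorem. The paper does not pursue it, but the idea---filtering $\mathscr{O}_{mE}$ and applying Theorem~\ref{theorem: fujita} on $E$ to get vanishing uniform in the filtration index---is precisely the mechanism the paper uses later (e.g.\ in Step~\ref{step: exceptional} of Theorem~\ref{theorem: restriction} and in Theorems~\ref{theorem: trans ample}--\ref{theorem: trans nef}). The trade-off is that Brown's theorem gives a one-line proof here, whereas your filtration argument is self-contained within the paper's own toolkit; the latter would be preferable if one wanted to avoid the external dependence on \cite{Brown}, at the cost of a few more lines. One small point: in the second route you should note that checking $q$-T-ampleness requires $\mathscr{O}(\Bl_Y X)$ to be a field, so a preliminary reduction to components (as in the proof of Theorem~\ref{theorem: fujita}) is needed, but this is routine.
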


\begin{proof}
The "if" part of the statment is clear, since restriction of a $q$-ample divisor to a subscheme is always $q$-ample.
For the "only if" part, observe that $\mathbf{B}_{+}(E+\epsilon A)\subseteq \Supp E$ for $0<\epsilon\ll1$, where $A$ is an ample divisor on $X$.
Recall that Brown's theorem \cite[Theorem 1.1]{Brown} says that an $\mathbf{R}$-Cartier $\mathbf{R}$-divisor $D$ is $q$-ample if and only if $D|_{\mathbf{B}_{+}(D)}$ is $q$-ample.
We conclude by applying his theorem to $E+\epsilon A$.
\end{proof}

\begin{definition}[Nef subscheme]
Let $Y$ be a closed subscheme of codimension $r$ of a projective scheme $X$, 
and $E$ be the exceptional divisor in $\Bl_Y X$.
We define $Y$ to be a \textit{nef subscheme of $X$} if
$\mathscr{O}_E(E)$ is $(r-1)$-almost ample.
\end{definition}

\begin{remark}
Proposition \ref{prop: exceptional} says that $Y$ is a nef subscheme if and only if $E$ is $(r-1)$-almost ample in $\Bl_Y X$. 
If $Y$ is l.c.i. in $X$, then $Y$ is nef if and only if the normal bundle $\mathscr{N}_{Y/X}$ is nef (Proposition \ref{prop: vector bundle}). 
The advantage of making this more general definition, without requiring $Y$ to be l.c.i., is to include more subschemes that are apparently "positive", for example, a closed point that lies in the singular locus of $X$, or if $Y$ is a smooth subvariety with nef normal bundle, the subscheme of $X$ defined by a power of the ideal sheaf of $Y$ is also considered as nef in this definition.
\end{remark}

The zero locus of a section of an ample vector bundle is an ample subscheme, provided that the codimension is maximal \cite[Proposition 4.5]{Ottem}.
The analogous statement holds for the zero locus of a section of a nef vector bundle. The proof is essentially the same as in the ample case \cite[Proof of Proposition 4.5]{Ottem}.
\begin{proposition}
Let $\mathscr{E}$ be a nef vector bundle on $X$ of rank $r\leq n$ and $Y$ be the zero locus of a global section of $\mathscr{E}$.
If the codimension of $Y$ is $r$, then $Y$ is a nef subscheme.
\end{proposition}

The following proposition is a direct generalization of \cite[Proposition 3.4]{Ottem}.
\begin{proposition}[Equidimensionality of nef subschemes]\label{prop: equi}
Suppose $Y$ is a nef subscheme of a projective scheme $X$.
Then the restriction of the blowup morphism of $X$ along $Y$, to the exceptional divisor $E$, $\pi|_{E}:E\rightarrow Y$, is equidimensional.
In particular, $Y$ is pure dimensional.
\end{proposition}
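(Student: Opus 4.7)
The plan is to combine two standard facts about the blowup with the $(r-1)$-almost ampleness hypothesis: first, that $\mathscr{O}_E(-E)$ is relatively ample for $\pi|_E:E\to Y$ (it is the tautological bundle on the projectivized normal cone), so $-E|_F$ is ample on every fiber $F$ of $\pi|_E$; second, that restriction of a $q$-ample divisor to a closed subscheme is again $q$-ample \cite[Proposition 2.3]{Ottem}. I will then exploit the geometric characterization of $(d-1)$-ampleness given by Theorem~\ref{theorem: n-1 ample}.

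Concretely, fix an ample Cartier divisor $A$ on $\Bl_Y X$ and choose $\epsilon>0$ small enough that $(E+\epsilon A)|_E$ is $(r-1)$-ample, which is possible by the definition of nefness of $Y$. Let $y\in Y$ and let $F=\pi|_E^{-1}(y)$ be a fiber of dimension $d$; my goal is to show $d\le r-1$. Restricting to $F$, the divisor class $(E+\epsilon A)|_F$ is $(r-1)$-ample by the restriction property. On the other hand, since $-E|_F$ is ample and ampleness is Zariski open, $-(E+\epsilon A)|_F=(-E-\epsilon A)|_F$ is ample on $F$ for $\epsilon$ small; in particular it lies in the pseudoeffective cone of $F$. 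Applying Theorem~\ref{theorem: n-1 ample} on $F$ (a projective scheme of dimension $d$), the class $(E+\epsilon A)|_F$ fails to be $(d-1)$-ample. Since any $(r-1)$-ample class is $(d-1)$-ample whenever $r-1\le d-1$, we conclude $r-1>d-1$, i.e.\ $d\le r-1$.

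Together with the lower bound $d\ge r-1$ coming from upper semicontinuity of fiber dimension (generic fibers of $\pi|_E$ over a component of $Y$ of codimension $r$ in $X$ have dimension $r-1$), this shows every fiber of $\pi|_E$ has dimension exactly $r-1$, which is the equidimensionality claim. For the pure-dimensionality of $Y$, suppose $Y$ had an irreducible component $Y'$ of codimension $r'>r$. Over the generic point of $Y'$ the fiber of $\pi|_E$ has dimension $r'-1>r-1$, contradicting the bound just established. Hence every component of $Y$ has codimension $r$, so $Y$ is pure of dimension $\dim X - r$.

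The main subtlety is verifying that the restriction $(E+\epsilon A)|_F$ really remains $(r-1)$-ample: $F$ may be highly non-reduced and need not be irreducible, but Ottem's restriction statement \cite[Proposition 2.3]{Ottem} applies to arbitrary closed subschemes, so this is immediate. The rest of the argument is a clean application of Theorem~\ref{theorem: n-1 ample} together with the relative ampleness of $-E$, so I do not anticipate further obstacles.
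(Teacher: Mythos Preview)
Your argument is essentially the same as the paper's: both restrict to a fiber $F=\pi^{-1}(y)$, use that $(E+\epsilon A)|_F$ is $(r-1)$-ample while $(-E-\epsilon A)|_F$ is ample, and deduce $\dim F\le r-1$. The only difference is in how the last implication is extracted: the paper invokes Theorem~\ref{thm: open} (the sum of a $0$-ample and an $(r-1)$-ample class is $(r-1)$-ample, so $\mathscr{O}_F$ itself is $(r-1)$-ample, forcing $\dim F\le r-1$), whereas you invoke Theorem~\ref{theorem: n-1 ample}.

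One small point: Theorem~\ref{theorem: n-1 ample} is stated for projective \emph{varieties}, and as you yourself note the fiber $F$ may be non-reduced or reducible. The fix is immediate---replace $F$ by an irreducible component of $F_{\mathrm{red}}$ of maximal dimension $d$ and run your argument there---but it is this step, not the restriction of $(r-1)$-ampleness, that actually needs care. With that adjustment your proof is correct.
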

\begin{proof}
Suppose $Y\subset X$ has codimension $r$.
Let $y\in Y$ be a closed point, we want to show that the fiber over $y$, $Z:=\pi^{-1}(y)$, is of dimension $(r-1)$.
Note that $E$ has dimension $n-1$, where $n=\dim X$.
This implies that $\dim Z\geq r-1$.

On the other hand, $-E$ is $\pi$-ample.
Therefore, $(-E-\epsilon A)|_Z$ is ample for $1\gg \epsilon >0$, where $A$ is an ample divisor on $E$.
By nefness of $Y$,
$\mathscr{O}_{E}(E+\epsilon A)$ is $(r-1)$-ample, for $1\gg \epsilon >0$. 
It follows from the convexity property (Theorem \ref{thm: open}) that $Z$ has dimension at most $(r-1)$.
\end{proof}

\begin{proposition}[Inverse image of nef subschemes]\label{prop: pullback of nef}
Suppose $Y$ is a nef subscheme of $X$ of codimension $r$, $p: X'\rightarrow X$ a morphism from an equidimensional projective scheme $X'$. If $p^{-1}(Y)$ has codimension $r$ in $X'$, then $p^{-1}(Y)$ is nef in $X'$. In particular, if $p$ is equidimensional, then $p^{-1}(Y)$ is nef.
\end{proposition}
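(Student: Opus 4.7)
The strategy is to construct, via the universal property of blowups, a morphism $q : \Bl_{Y'} X' \to \Bl_Y X$ (where $Y' := p^{-1}(Y)$), and then to transport the $(r-1)$-almost ampleness of the exceptional divisor $E \subset \Bl_Y X$ to the exceptional divisor $E' \subset \Bl_{Y'} X'$ using Proposition \ref{prop: pullback}.

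For the first step, since $Y'$ is the scheme-theoretic preimage of $Y$ we have $\mathscr{I}_{Y'} = \mathscr{I}_Y \cdot \mathscr{O}_{X'}$, so the pullback of $\mathscr{I}_Y$ along the composition $p \circ \pi' : \Bl_{Y'} X' \to X$ (where $\pi' : \Bl_{Y'} X' \to X'$ is the blowup map) equals $\mathscr{O}(-E')$, which is invertible. The universal property of $\pi : \Bl_Y X \to X$ then yields a unique morphism $q : \Bl_{Y'} X' \to \Bl_Y X$ of projective schemes fitting into the evident commutative square, and by tracing ideal sheaves through this diagram one checks that $q^{*} E = E'$ as Cartier divisors.

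For the second step, by Proposition \ref{prop: exceptional} and the hypothesis that $Y$ is nef, $E$ is $(r-1)$-almost ample on $\Bl_Y X$; fix an ample divisor $A$ on $\Bl_Y X$ and a small $\epsilon > 0$ so that $E + \epsilon A$ is $(r-1)$-ample, and fix an ample divisor $A''$ on $\Bl_{Y'} X'$ (which is automatically $q$-relatively ample). Proposition \ref{prop: pullback} then yields that $m q^{*}(E + \epsilon A) + A''$ is $(r-1)$-ample for $m \gg 0$, and dividing by $m$ (which preserves the $(r-1)$-ample cone, since it is a cone by Theorem \ref{thm: open}) gives that
\[
E' + \epsilon\, q^{*}A + \tfrac{1}{m}\, A''
\]
is $(r-1)$-ample. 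Since $q^{*}A$ is nef (pullback of ample is nef) and $A''$ is ample, the perturbation $\epsilon q^{*}A + \tfrac{1}{m} A''$ is an ample class that tends to zero as $\epsilon \to 0$ and $m \to \infty$, so $E'$ lies in the closure of the $(r-1)$-ample cone in $N^{1}(\Bl_{Y'} X')_{\mathbf{R}}$, i.e., $E'$ is $(r-1)$-almost ample. A final application of Proposition \ref{prop: exceptional} then concludes that $Y'$ is nef in $X'$. The ``in particular'' statement follows because an equidimensional morphism preserves codimensions, so $p^{-1}(Y)$ automatically has codimension $r$ in $X'$.

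The main obstacle I anticipate is justifying $q^{*} E = E'$ as Cartier divisors on $\Bl_{Y'} X'$: this is the point where the codimension hypothesis on $p^{-1}(Y)$ is really needed (ensuring that $E'$ is a genuine divisor rather than something degenerate), and it requires careful tracking of ideal-sheaf pullbacks through the universal property of the blowup. Once that identification is in place, the remaining steps are a routine combination of Proposition \ref{prop: pullback} with the standard facts that the pullback of an ample divisor is nef and that nef-plus-ample is ample.
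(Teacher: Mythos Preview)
Your proof is correct and matches the paper's approach exactly: build the map $\Bl_{Y'} X' \to \Bl_Y X$ via the universal property, identify $q^{*}E = E'$, and apply Proposition \ref{prop: pullback} (the paper's proof is just these three steps stated tersely, without your explicit perturbation argument). One small correction to your commentary: the identity $q^{*}E = E'$ follows purely from $\mathscr{I}_{Y'} = \mathscr{I}_Y \cdot \mathscr{O}_{X'}$ and the universal property, independent of the codimension hypothesis---that hypothesis is needed only so that ``$(r-1)$-almost ample'' is the correct nefness condition for $E'$, i.e., so that $r$ equals the codimension of $Y'$ in $X'$.
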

\begin{proof}
We have the following commutative diagram
\cite[Corollary II.7.15]{AG}:

\[
\xymatrix{
\Bl_{p^{-1}(Y)}(X') \ar[r]^{\widetilde{p}} \ar[d] & \Bl_Y(X) \ar[d] \\
X' \ar[r]_{p} & X,
}
\]
with $\tilde{p}$ induced by the universal property of blowup and $\tilde{p}^{*}(E)=E'$, where $E$ and $E'$ are the exceptional divisors in the respective blowups. 
We now apply Proposition \ref{prop: pullback} to conclude the proof. 
\end{proof}

\begin{remark}
It is necessary to assume that the codimension of $Y$ is unchanged after taking inverse image.
Consider the case where $Y$ is a closed point of $X$ and $p$ is the blowup morphism of $X$ along $Y$.
Note that $Y$ is nef, even ample, in $X$, but the exceptional divisor is not nef.
\end{remark}
\begin{proposition}\label{prop: ample intersect}
Let $Y$ be an ample (resp. nef) subscheme of codimension $r$ of $X$.
Let $Z$ be a closed subscheme of $X$.
If $Y\cap Z$ has codimension $r$ in $Z$,
then $Y\cap Z$ is an ample (resp. nef) subscheme of $Z$.
\end{proposition}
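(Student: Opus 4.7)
The plan is to reduce the problem to the key fact that the blowup of $Z$ along $Y\cap Z$ is naturally identified with the strict transform of $Z$ inside $\Bl_Y X$, and then transport positivity of the exceptional divisor in $\Bl_Y X$ to the exceptional divisor of $\Bl_{Y\cap Z} Z$ by restriction.

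First I would set $\widetilde{X}=\Bl_Y X$ with exceptional divisor $E$, and let $\widetilde{Z}\subset \widetilde{X}$ be the strict transform of $Z$ under $\pi:\widetilde{X}\to X$. Because $Y\cap Z$ has codimension $r$ in $Z$ (the ``expected'' codimension, the same as $Y$ in $X$), no irreducible component of $Z$ is contained in $Y$. Under this hypothesis, the standard argument with the ideal-transform (writing $\Bl_{Y\cap Z} Z = \Proj_Z\!\bigoplus_n(\mathscr{I}_Y\!\cdot\!\mathscr{O}_Z)^n$, and comparing to $\Proj_Z$ of the image of $\pi^{*}\bigoplus_n \mathscr{I}_Y^n$ in $\mathscr{O}_{\widetilde Z}$) gives a canonical isomorphism $\widetilde{Z}\cong \Bl_{Y\cap Z} Z$ under which the exceptional Cartier divisor $E_Z$ of the blowup corresponds to $E|_{\widetilde{Z}}$.

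Granted this identification, the ample case is immediate: $E$ is $(r-1)$-ample on $\widetilde{X}$ by hypothesis, and $q$-ampleness restricts to closed subschemes (\cite[Proposition~2.3]{Ottem} and the paper's discussion following Theorem~\ref{thm: open}); hence $E_Z=E|_{\widetilde Z}$ is $(r-1)$-ample on $\widetilde{Z}=\Bl_{Y\cap Z}Z$, so $Y\cap Z$ is ample in $Z$ by Ottem's definition.

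For the nef case I would invoke Proposition~\ref{prop: exceptional} to replace the defining condition with the equivalent ``$E$ is $(r-1)$-almost ample on $\widetilde X$''. Choose an ample divisor $A$ on $\widetilde X$ and $\epsilon>0$ such that $E+\epsilon A$ is $(r-1)$-ample on $\widetilde X$. Restriction preserves $q$-ampleness, $A|_{\widetilde Z}$ is ample on $\widetilde Z$, and $(E+\epsilon A)|_{\widetilde Z}=E_Z+\epsilon A|_{\widetilde Z}$, so $E_Z$ is $(r-1)$-almost ample on $\widetilde Z$; applying Proposition~\ref{prop: exceptional} in the reverse direction shows $Y\cap Z$ is nef in $Z$. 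The one place that needs care, and that I expect to be the main obstacle to write cleanly, is the scheme-theoretic identification $\Bl_{Y\cap Z}Z\cong \widetilde Z$ and in particular that the pullback $\pi^{*}E$ restricts without multiplicity drop to the exceptional Cartier divisor $E_Z$; this is exactly where the codimension hypothesis is used, to rule out embedded contributions from components of $Z$ absorbed into $Y$.
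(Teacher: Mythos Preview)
Your proposal is correct and follows essentially the same approach as the paper: the paper simply records the closed immersion $\Bl_{Y\cap Z}Z\hookrightarrow\Bl_Y X$, notes that the exceptional divisor of $\pi_Z$ is the restriction of $E$, and concludes by restricting $(r-1)$-ampleness (resp.\ $(r-1)$-almost ampleness). Your worry about ``multiplicity drop'' is unnecessary---the inverse image of $\mathscr{I}_Y$ in $\Bl_{Y\cap Z}Z$ is exactly the ideal of $E_Z$ by construction, so $E|_{\Bl_{Y\cap Z}Z}=E_Z$ as Cartier divisors without further argument; the codimension hypothesis is used only to ensure that $(r-1)$ is the correct index for the definition of ampleness/nefness of $Y\cap Z$ in $Z$.
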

\begin{proof}
Indeed, we have the following commutative diagram \cite[Corollary II.7.15]{AG}:
\[
\xymatrix{
\Bl_{Y\cap Z} Z \ar@{^{(}->}[r] \ar[d]_{\pi_Z}
& \Bl_{Y}X \ar[d]^{\pi_X}\\
Z \ar@{^{(}->}[r] & X.
}
\]

Note that the exceptional divisor of $\pi_Z$ is the restriction of the exceptional divisor $E$ of $\pi_X$.
If $E$ is $(r-1)$-ample (resp. $(r-1)$-almost ample), so is $E|_{\Bl_{Y\cap Z}Z}$. 
\end{proof}

Theorem \ref{theorem: trans ample} generalizes the transitivity property of ample subschemes \cite[Proposition 6.4]{Ottem} in the sense that we do not require $Y$ (resp. $Z$) to be l.c.i. in $X$ (resp. $Y$).
This gives further evidence that Ottem's definition of an ample subscheme is a natural one.
First, we need a couple of lemmata:
\begin{lemma}\label{lemma: pass to blowup}
Let $X$ be a projective scheme and
$Y$ be a closed subscheme of $X$ of codimension $r$.
Fix an ample line bundle $\mathscr{O}_X(1)$ on $X$.
Suppose the blowup of $X$ along $Y$, $\pi:\Bl_Y X\rightarrow X$, has fiber dimension at most $r-1$,
and $\mathscr{L}$ is a line bundle on $X$
such that its restriction to $Y$, $\mathscr{L}|_Y$, is $q$-ample.
If for any $l\geq 0$,
\[
H^{i}\biggl(\Bl_Y X, \pi^{*}\bigl(\mathscr{L}^{\otimes m}\otimes\mathscr{O}_X(-l)\bigr)\biggr)=0
\]
for $i>q+r$ and $m\gg 0$,
then $\mathscr{L}$ is $(q+r)$-ample.
\end{lemma}
\begin{proof}
Applying the Leray spectral sequence, we have
\[
E^{p,s}_{2}=H^{p}\bigl(X, R^{s}\pi_{*}\mathscr{O}_{\Bl_Y X}\otimes \mathscr{L}^{\otimes m}\otimes\mathscr{O}_X(-l)\bigr)
\Rightarrow H^{p+s}\biggl(\Bl_Y X, \pi^{*}\bigl(\mathscr{L}^{\otimes m}\otimes\mathscr{O}_X(-l)\bigr)\biggr).
\]

Since the fiber dimension of $\pi$ is at most $r-1$,
$R^{s}\pi_{*}\mathscr{O}_{\Bl_Y X}$ and 
$E^{p,s}_{2}=0$
for $s>r-1$
\cite[Corollary III.11.2]{AG}.

For $s>0$, $R^{s}\pi_{*}\mathscr{O}_{\Bl_Y X}$ is a coherent sheaf \underline{on $Y$}. 
Indeed, this follows by considering the long exact sequence
\[
\cdots\rightarrow R^{s}\pi_{*}\mathscr{O}_{\Bl_Y X}(-jE)
\rightarrow R^{s}\pi_{*}\mathscr{O}_{\Bl_Y X}\bigl((-j+1)E\bigr)
\rightarrow R^{s}\pi_{*}\mathscr{O}_{E}\bigl((-j+1)E\bigr)\rightarrow \cdots,
\]
where $E$ is the exceptional divisor,
and the fact that $R^{s}\pi_{*}\mathscr{O}_{\Bl_Y X}(-jE)=0$ for $j\gg 0$ and $s>0$, thanks to the fact that $-E$ is $\pi$-ample.

By the $q$-ampleness of $\mathscr{L}|_{Y}$, 
we have
\[
E^{p,s}_2=H^{p}\bigl(X,R^{s}\pi_{*}\mathscr{O}_{\Bl_Y X}\otimes \mathscr{L}^{\otimes m}\otimes\mathscr{O}_X(-l)\bigr)=0
\]
for $p>q$, $s>0$ and $m\gg0$.

These two vanishing results imply that
$E^{p-h,h-1}_{h}=E^{p-h,h-1}_{2}=0$ for $h \geq 2$, $p>q+r$ and $m\gg0$.

By the hypothesis,
\[
E^{p,0}_{\infty}=H^{p}\biggl(\Bl_Y X, \pi^{*}\bigl(\mathscr{L}^{\otimes m}\otimes\mathscr{O}_X(-l)\bigr)\biggr)=0
\]
for $p>q+r$ and $m\gg 0$.
Since the differentials on the $h$-th page $E_{h}^{p-h,h-1}\rightarrow E^{p,0}_h$ and $E_{h}^{p,0}\rightarrow E^{p+h,-h+1}_h$ are zero maps for $h\geq 2$, we arrive at the desired vanishing $E^{p,0}_{2}=H^{p}\bigl(X, \mathscr{L}^{\otimes m}\otimes\mathscr{O}_X(-l)\bigr)=0$ for $p>q+r$ and $m\gg 0$. 
\end{proof}

\begin{lemma}\label{lemma: trans ample}
Let $Y$ (resp. $Z$) be a closed subscheme of $X$ (resp. Y). Then we have the following commutative diagram.
\[
\xymatrix{
\Bl_{\mathscr{I}_{Y}\cdot\mathscr{I}_{Z}}X
\ar@/^1.5pc/[drr]^{\pi_Y}
\ar@/_1.5pc/[ddr]_{\pi_Z}
\ar[dr]^{\iota} & & \\
& \Bl_{\mathscr{I}_{Y}}X \times_X \Bl_{\mathscr{I}_{Z}}X \ar[r]^-p \ar[d]_q
& \Bl_{\mathscr{I}_{Z}}X \ar[d]^{\pi'_Z} \\
& \Bl_{\mathscr{I}_{Y}}X \ar[r]^{\pi'_Y}
& X.
}
\]

Here $\pi'_Z$ (resp. $\pi'_Y$) is the blowup of $X$ along $\mathscr{I}_Z$ (resp. $\mathscr{I}_Y$), with exceptional divisor $E'_Z$ (resp. $E'_Y$);
$\pi_Z$ (resp. $\pi_Y$) is the blowup along the ideal sheaf 
$\mathscr{I}_Z\cdot \mathscr{O}_{\Bl_{\mathscr{I}_Y}X}$ (resp. 
$\mathscr{I}_Y\cdot \mathscr{O}_{\Bl_{\mathscr{I}_Z}X}$), with exceptional divisor $E_Z$ 
(resp. $E_Y$).
The square in the above diagram is the obvious fiber diagram, with $\iota$ induced by the maps $\pi_Z$ and $\pi_Y$.

Moreover,
\begin{enumerate}
\item\label{item:productc} The compositions $\pi_Y\circ\pi'_Z=\pi_Z\circ\pi'_Y$ coincide with the blowup of $X$ along $\mathscr{I}_Y\cdot\mathscr{I}_Z$.
    \item\label{item: producta} $\pi_Y^{*}E'_Z=E_Z$ and $\pi_Z^{*}E'_Y=E_Y$.
    \item\label{item: productb} $\iota$ is a closed immersion.
    \item\label{item: productd} 
    Let $\widetilde{Y}$ be the closed subscheme  defined by the ideal sheaf $\Bl_{\mathscr{I}_Z}X$.
    The morphism $\pi_Y$ may also be seen as the blowup along $\mathscr{I}_{\widetilde{Y}}$, with exceptional divisor $E_Y-E_Z$ instead.
    We denote $E_{\widetilde{Y}}:=E_Y-E_Z$.
\end{enumerate}
\end{lemma}
\begin{proof}
The proof is elementary and is omitted.
\end{proof}
\begin{theorem}[Transitivity of ample subschemes]\label{theorem: trans ample}
Let $Y\subset X$ be an ample subscheme of codimension $r_1$, $Z\subset Y$ be an ample subscheme of codimension $r_2$. Then $Z\subset X$ is also an ample subscheme of codimension $r_1+r_2$.
\end{theorem}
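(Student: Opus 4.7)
The plan is to apply Lemma \ref{lemma: pass to blowup} to $X_1 := \Bl_Z X$, taking ``$Y$'' in the lemma to be the strict transform $\widetilde{Y} \subset X_1$ of $Y$ under $\sigma : X_1 \to X$, and ``$\mathscr{L}$'' to be $\mathscr{O}_{X_1}(E_Z)$, where $E_Z$ is the exceptional divisor of $\sigma$. Concluding that $\mathscr{O}_{X_1}(E_Z)$ is $(r_1+r_2-1)$-ample on $X_1$ is precisely the assertion that $Z$ is an ample subscheme of codimension $r_1+r_2$ in $X$.

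To set up the hypotheses of the lemma, I would use that $Z \subset Y$ implies the strict transform $\widetilde{Y}$ is canonically isomorphic to $\Bl_Z Y$; hence it has codimension $r_1$ in $X_1$, and $\mathscr{O}_{X_1}(E_Z)\big|_{\widetilde{Y}}$ is the exceptional divisor of $\Bl_Z Y \to Y$, which is $(r_2-1)$-ample by the hypothesis that $Z$ is ample in $Y$. The fiber-dimension condition on $\Bl_{\widetilde{Y}} X_1 \to X_1$ would be established from the ampleness of $Y$ in $X$ via Proposition \ref{prop: equi} applied to $\widetilde{Y} \subset X_1$. The remaining and main step is the cohomological vanishing
\[
H^i\bigl(W,\, \pi^{*}\mathscr{O}_{X_1}(m E_Z - l H)\bigr) = 0 \quad \text{for } i > r_1+r_2-1,\ m \gg 0,
\]
on $W := \Bl_{\widetilde{Y}} X_1$, where $H$ is an ample divisor on $X_1$ and $\pi: W \to X_1$ is the second blowup.

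I would handle this last step by constructing a morphism $\tau : W \to \widehat{X} := \Bl_Y X$ from the universal property of blowup---after the two successive blowups the pullback of $I_Y$ becomes invertible on $W$---then writing $\tau^{*} E_Y$ as a nonnegative combination of the exceptional divisors on $W$, absorbing the $-lH$ twist by Proposition \ref{prop: pullback}, and running the Leray spectral sequence for $\tau$ against the $(r_1-1)$-ampleness of $E_Y$ on $\widehat{X}$. The main obstacle will be the bookkeeping of the several exceptional divisors on $W$: specifically, determining the multiplicity with which the total transform of $I_Y$ factors through $E_Z$ on $X_1$, identifying $\tau^{*}E_Y$ explicitly, and controlling $R\tau_{*}\mathscr{O}_W$ sufficiently to make the spectral sequence argument work in the desired range. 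Once this is in place, Lemma \ref{lemma: pass to blowup} delivers the $(r_1+r_2-1)$-ampleness of $\mathscr{O}_{X_1}(E_Z)$, completing the proof.
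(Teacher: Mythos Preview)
Your broad strategy---apply Lemma~\ref{lemma: pass to blowup} on $X_1=\Bl_Z X$ with the line bundle $\mathscr{O}_{X_1}(E_Z)$ and a codimension-$r_1$ subscheme related to $Y$---is exactly the paper's.  The difficulty is your choice of second model $W=\Bl_{\widetilde{Y}} X_1$, which creates two genuine gaps.

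First, you cannot invoke Proposition~\ref{prop: equi} for $\widetilde{Y}\subset X_1$: that proposition assumes the subscheme is nef in the ambient scheme, and nothing in the hypotheses tells you $\widetilde{Y}$ is nef in $X_1$.  What you know is that $Y$ is ample in $X$, which bounds the fibers of $\Bl_Y X\to X$, not of $\Bl_{\widetilde{Y}}X_1\to X_1$.  Second, the morphism $\tau\colon W\to\Bl_Y X$ need not exist.  You would need $\mathscr{I}_Y\cdot\mathscr{O}_W$ invertible, but in general $\mathscr{I}_Y\cdot\mathscr{O}_{X_1}$ is \emph{not} simply $\mathscr{I}_{\widetilde{Y}}\otimes\mathscr{O}_{X_1}(-E'_Z)$: the ideal $\mathscr{I}_Y\cdot\mathscr{O}_{X_1}\otimes\mathscr{O}_{X_1}(E'_Z)$ may differ from the strict-transform ideal by embedded or thickened components along $E'_Z$, so blowing up $\widetilde{Y}$ does not force $\mathscr{I}_Y$ to become principal.

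The paper resolves both issues at once by replacing $W$ with $\Bl_{\mathscr{I}_Y\cdot\mathscr{I}_Z}X$ (Lemma~\ref{lemma: trans ample}).  On this scheme both $\mathscr{I}_Y$ and $\mathscr{I}_Z$ pull back to invertible ideals by an elementary local computation, so the maps to $\Bl_Z X$ and $\Bl_Y X$ come for free; moreover it sits as a closed subscheme of the fiber product $\Bl_Y X\times_X\Bl_Z X$, so the fibers of $\pi_Y$ are bounded by those of $\Bl_Y X\to X$, and \emph{those} are controlled by Proposition~\ref{prop: equi} applied to $Y\subset X$.  For the cohomological step the paper does not run a Leray spectral sequence along $\tau$; instead it identifies on the model two divisors---one $(r_1-1)$-ample and one whose restriction to $E_Y$ is $(r_2-1)$-ample---rewrites $mE_Z$ as a positive combination of them plus bounded remainders, and appeals to the Fujita-type vanishing of Theorem~\ref{theorem: fujita}.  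Your Leray idea, even granting $\tau$, would face the problem of controlling $R^j\tau_*$ of line bundles that vary with $m$.
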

\begin{proof}

Note that $\pi_Y$ has fiber dimension at most $r_1 -1$.
This follows from (\ref{item: productb}) of Lemma \ref{lemma: trans ample}
and the fact that $\pi'_Y$ has fiber dimension at most $r_1-1$ (Proposition \ref{prop: equi}).

By Lemma \ref{lemma: pass to blowup} and the fact that $\pi_Y^{*}E'_Z=E_Z$, it suffices to prove that given any $l\in \mathbb{Z}_{\geq0}$,
\begin{equation}\label{eq:vanishing99}
H^{i}\biggl(\Bl_{\mathscr{I}_{Y}\cdot\mathscr{I}_{Z}}X,\mathscr{O}_{\Bl_{\mathscr{I}_{Y}\cdot\mathscr{I}_{Z}}X}(mE_Z)\otimes\pi_Y^*\mathscr{O}_{\Bl_{\mathscr{I}_{Z}}X}(-lH)\biggr)=0
\end{equation}
for $i>r_1 +r_2 -1$ and $m\gg 0$.
Here $H$ is an ample divisor on $\Bl_{\mathscr{I}_{Z}}X$.
We fix an $l\in \mathbb{Z}_{\geq0}$ from now on.

\begin{claim}
$(k E_Z - E_{\widetilde{Y}})|_{E_{\widetilde{Y}}}$ is $(r_2-1)$-ample for $k\gg 0$.
\end{claim}
\begin{proof}[Proof of claim]
Since $Z$ is an ample subscheme of $Y$, $E'_Z|_{\widetilde{Y}}$ is $(r_2 -1)$-ample.
Recall that $E_{\widetilde{Y}}$ is the exceptional divisor of $\pi_Y$, considered as the blowup along $\widetilde{Y}$.
It follows that $-E_{\widetilde{Y}}$ is $\pi_Y$-ample.
Thus, $(\pi_Y^*kE'_Z - E_{\widetilde{Y}})|_{E_{\widetilde{Y}}}=(kE_Z -E_{\widetilde{Y}})|_{E_{\widetilde{Y}}}$ is $(r_2-1)$-ample, for $k\gg 0$, by Proposition \ref{prop: pullback}.
\end{proof}

\begin{claim}
$kE_Z + (k+1)E_{\widetilde{Y}}=(k+1)E_Y-E_Z$ is $(r_1-1)$-ample for $k\gg 0$.
\end{claim}
\begin{proof}[Proof of claim]
Indeed, $E_Y=\pi_{Z}^{*}E'_Y$ and $E'_Y$ is $(r_1 -1)$-ample by the ampleness of $Y\subset X$.
Note that $-E_Z$ is $\pi_{Z}$-ample.
The claim follows again from Proposition \ref{prop: pullback}.
\end{proof}

Fix a large enough $k\in\mathbb{Z}$ such that both claims hold.
Given positive integers $m_1$ and $m_2$, we may write 
\[
m_1 E_{\widetilde{Y}}+m_2 E_Z =\lambda_1(kE_Z - E_{\widetilde{Y}}) +\lambda_2(kE_Z +(k+1)E_{\widetilde{Y}}) + j_1 E_{\widetilde{Y}} +j_2 E_Z,
\]
where $\lambda_2=\lfloor\frac{m_1+\lfloor  \frac{m_2}{k}\rfloor}{k+2}\rfloor$;
$\lambda_1= \lfloor\frac{m_2}{k}\rfloor-\lambda_2$;
$j_1=(m_1+\lfloor\frac{m_2}{k}\rfloor)\bmod (k+2)$
and
$j_2= m_2\bmod k.$
Note that $0\leq j_1<k+2$ and $0\leq j_2<k$.
The plan is to fix a big $m_2$, and then let $m_1$ increase. 
As $m_1$ grows, $\lambda_1$ decreases while $\lambda_2$ increases.
We then use the positivity of $(kE_Z- E_{\widetilde{Y}})|_{E_{\widetilde{Y}}}$ and $kE_Z + (k+1)E_{\widetilde{Y}}$ to prove the required vanishing statement (\ref{eq:vanishing99}).

Since $kE_Z +(k+1)E_{\widetilde{Y}}$ is $(r_1-1)$-ample, we may find $\Lambda_2$ such that 
\begin{equation}\label{eq: a}
H^{i}\biggl(\Bl_{\mathscr{I}_{Y}\cdot\mathscr{I}_{Z}}X,\mathscr{O}\bigl(\lambda_2(kE_Z +(k+1)E_{\widetilde{Y}}) + j_1 E_{\widetilde{Y}} +j_2 E_Z\bigr)\otimes\pi_Y^*\mathscr{O}_{\Bl_{\mathscr{I}_{Z}}X}(-lH)\biggr)=0
\end{equation}
for $i>r_1 -1$, $\lambda_2\geq \Lambda_2$, $0\leq j_1<k+2$ and $0\leq j_2 <k$.

Applying Theorem \ref{theorem: fujita} to the subscheme $E_{\widetilde{Y}}\subset\Bl_{\mathscr{I}_{Y}\cdot\mathscr{I}_{Z}}X$,
this gives a positive integer $\Lambda'_2$ such that 
\[
H^{i}\biggl(E_{\widetilde{Y}}, \mathscr{O}_{E_Y}\bigl(\lambda_1(kE_Z - E_{\widetilde{Y}}) +\lambda_2(kE_Z +(k+1)E_{\widetilde{Y}}) + j_1 E_{\widetilde{Y}} +j_2 E_Z\bigr)\otimes \pi^{*}_Y\mathscr{O}_{\Bl_{\mathscr{I}_Z}}(-lH)\biggr)=0
\]
for $i>(r_2 -1) + (r_1 -1)$, $\lambda_1\geq 0$, $\lambda_2 \geq \Lambda'_2$, $0\leq j_1<k+2$ and $0\leq j_2<k$.
Chasing through the obvious long exact sequence, this gives
\begin{multline}\label{eq: b}
H^{i}\biggl(\Bl_{\mathscr{I}_{Y}\cdot\mathscr{I}_{Z}}X,\mathscr{O}(m_2 E_Z + m_1 E_{\widetilde{Y}})\otimes\pi_Y^*\mathscr{O}_{\Bl_{\mathscr{I}_{Z}}X}(-lH)\biggr)\\
\cong
H^{i}\biggl(\Bl_{\mathscr{I}_{Y}\cdot\mathscr{I}_{Z}}X,\mathscr{O}\bigl(m_2 E_Z + (m_1 +1)E_{\widetilde{Y}}\bigr)\otimes\pi_Y^*\mathscr{O}_{\Bl_{\mathscr{I}_{Z}}X}(-lH)\biggr)
\end{multline}
for $i>r_1 + r_2 -1$, $0<m_1+1< (k+1)\lfloor\frac{m_2}{k}\rfloor$ and $\lfloor\frac{m_1 +1+\lfloor  \frac{m_2}{k}\rfloor}{k+2}\rfloor\geq \Lambda'_2$.

Choose some big $M_2$ such that $\lfloor\frac{\lfloor  \frac{M_2}{k}\rfloor}{k+2}\rfloor\geq \max\{\Lambda_2,\Lambda'_2\}$.
Applying (\ref{eq: b}) repeatedly, increasing $m_1$ by $1$ at a time, starting from $0$,
we have for $m_2>M_2$, 
\begin{multline*}
H^{i}\biggl(\Bl_{\mathscr{I}_{Y}\cdot\mathscr{I}_{Z}}X,\mathscr{O}(m_2 E_Z)\otimes\pi_Y^*\mathscr{O}_{\Bl_{\mathscr{I}_{Z}}X}(-lH)\biggr)\\
\cong
H^{i}\biggl(\Bl_{\mathscr{I}_{Y}\cdot\mathscr{I}_{Z}}X,\mathscr{O}\bigl(m_2 E_Z + (k+1)\lfloor \frac{m_2}{k}\rfloor E_{\widetilde{Y}}\bigr)\otimes\pi_Y^*\mathscr{O}_{\Bl_{\mathscr{I}_{Z}}X}(-lH)\biggr)
\end{multline*}
for $i>r_1 + r_2 -1$.
The above cohomology group can be rewritten as
\[
H^{i}\biggl(\Bl_{\mathscr{I}_{Y}\cdot\mathscr{I}_{Z}}X,\mathscr{O}\bigl(\lfloor\frac{m_2}{k}\rfloor(kE_Z +(k+1)E_{\widetilde{Y}}) +(m_2 -k\lfloor\frac{m_2}{k}\rfloor) E_Z\bigr)\otimes\pi_Y^*\mathscr{O}_{\Bl_{\mathscr{I}_{Z}}X}(-lH)\biggr),
\]
which is $0$ by (\ref{eq: a}).
\end{proof}

We then prove the analogue of Theorem \ref{theorem: trans ample} for nef subschemes.
The idea of the proof is essentially the same,
although we have to use the full statement of Theorem \ref{theorem: fujita} by allowing a nef term, as well as taking extra care with the variables.
\begin{theorem}[Transitivity of nef subschemes]\label{theorem: trans nef}
Let $Y\subset X$ be a nef subscheme of codimension $r_1$, $Z\subset Y$ be a nef subscheme of codimension $r_2$. Then $Z\subset X$ is also a nef subscheme of codimension $r_1+r_2$.
\end{theorem}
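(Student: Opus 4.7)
The plan is to mimic the proof of Theorem \ref{theorem: trans ample} line by line, replacing each $q$-ampleness assertion by a $q$-almost ampleness one and absorbing the resulting small ample perturbations into the nef slot in the Fujita-type vanishing Theorem \ref{theorem: fujita}. First I would adopt the diagram and notation of Lemma \ref{lemma: trans ample}, fix an ample divisor $A$ on $\Bl_{\mathscr{I}_Z} X$, and set $H := \pi_Y^{*} A$. By Proposition \ref{prop: exceptional} it is enough to produce some $\epsilon > 0$ such that $E'_Z + \epsilon A$ is $(r_1 + r_2 - 1)$-ample on $\Bl_{\mathscr{I}_Z} X$. Since the strict transform $\tilde Y$ is naturally $\Bl_Z Y$ and $Z$ is nef in $Y$, the restriction $(E'_Z + \epsilon A)|_{\tilde Y}$ is $(r_2 - 1)$-ample for $0 < \epsilon \ll 1$; together with the fiber-dimension bound $\leq r_1 - 1$ for $\pi_Y$ (Proposition \ref{prop: equi}), Lemma \ref{lemma: pass to blowup} reduces the problem to showing, for each fixed $l \geq 0$, that
\[
H^{i}\bigl(\Bl_{\mathscr{I}_Y \cdot \mathscr{I}_Z} X,\ \mathscr{O}(m E_Z + (m\epsilon - l) H)\bigr) = 0
\]
for $i > r_1 + r_2 - 1$ and $m \gg 0$.

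Next I would reprove the two auxiliary claims from the ample proof in this almost-ample setting. Using that $-E_Y$ is $\pi_Y$-ample and that $E_Z|_{E_Y}$ is pulled back from $\tilde Y$, Proposition \ref{prop: pullback} furnishes $k \gg 0$ and $\epsilon_2 > 0$ so that $(kE_Z - E_Y + \epsilon_2 H)|_{E_Y}$ is $(r_2 - 1)$-ample; symmetrically, exploiting $E_Y + E_Z = \pi_Z^{*} E'_Y$, the nefness of $Y$ in $X$, and that $-E_Z$ is $\pi_Z$-ample, one chooses $\epsilon_1 > 0$ (enlarging $k$ if needed) so that $k E_Z + (k + 1) E_Y + \epsilon_1 H$ is $(r_1 - 1)$-ample on $\Bl_{\mathscr{I}_Y \cdot \mathscr{I}_Z} X$. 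I would then imitate the decomposition of the ample proof by writing
\[
m_1 E_Y + m_2 E_Z + \nu H = \lambda_1 (kE_Z - E_Y + \epsilon_2 H) + \lambda_2 (kE_Z + (k{+}1) E_Y + \epsilon_1 H) + j_1 E_Y + j_2 E_Z + (\nu - \lambda_1 \epsilon_2 - \lambda_2 \epsilon_1) H,
\]
with the same explicit $\lambda_1, \lambda_2, j_1, j_2$ as before. Choosing $\epsilon > 0$ small relative to $k, \epsilon_1, \epsilon_2$ ensures that throughout the sliding range and with $\nu = m_2 \epsilon - l$, the residual coefficient $\nu - \lambda_1 \epsilon_2 - \lambda_2 \epsilon_1$ stays nonnegative, so that the corresponding line bundle is nef. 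Applying Theorem \ref{theorem: fujita} on $\Bl_{\mathscr{I}_Y \cdot \mathscr{I}_Z} X$ (and on $E_Y$) with the $(r_1 - 1)$-ample bundle $\mathscr{O}(kE_Z + (k+1) E_Y + \epsilon_1 H)$ (and on $E_Y$ also the $(r_2 - 1)$-ample $\mathscr{O}((kE_Z - E_Y + \epsilon_2 H)|_{E_Y})$) and this residual nef bundle as $\mathscr{P}$ then yields direct nef-case analogues of equations (\ref{eq: a}) and (\ref{eq: b}); the sliding argument $m_1 \mapsto m_1 + 1$ closes the proof exactly as in Theorem \ref{theorem: trans ample}.

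The principal obstacle will be the uniform bookkeeping of the auxiliary constants $k, \epsilon_1, \epsilon_2, \epsilon$: they must be arranged so that the residual $H$-coefficient is nonnegative throughout the entire sliding range, making the residual bundle a genuine nef $\mathscr{P}$ that Theorem \ref{theorem: fujita} can accept. This is the extra care with the variables alluded to in the text; once it is in place, no new ideas beyond the ample case are needed.
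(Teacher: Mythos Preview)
Your overall plan is the right one and coincides with the paper's: run the sliding argument of Theorem~\ref{theorem: trans ample} with a nef residual absorbed by Theorem~\ref{theorem: fujita}. The first auxiliary claim, that $(kE_Z-E_Y+\epsilon_2 H)|_{E_Y}$ is $(r_2-1)$-ample, goes through exactly as you indicate.

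The gap is in your second claim. You assert that $kE_Z+(k+1)E_Y+\epsilon_1 H$ is $(r_1-1)$-ample, using only that $E_Y+E_Z=\pi_Z^{*}E'_Y$, that $E'_Y$ is $(r_1-1)$-\emph{almost} ample, and that $-E_Z$ is $\pi_Z$-ample. Rewriting, your divisor is $(k+1)\pi_Z^{*}E'_Y-E_Z+\epsilon_1 H$. To apply Proposition~\ref{prop: pullback} through $\pi_Z$ you would need an honestly $(r_1-1)$-ample class on $\Bl_{\mathscr{I}_Y}X$, i.e.\ $E'_Y$ perturbed by something \emph{ample on $\Bl_{\mathscr{I}_Y}X$}. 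Your $H=\pi_Y^{*}A$ contributes nothing of the sort: writing $A\equiv \pi_Z^{\prime*}A_0-cE'_Z$ one gets $H=\pi_Z^{*}(\pi_Y^{\prime*}A_0)-cE_Z$, and $\pi_Y^{\prime*}A_0$ is only nef on $\Bl_{\mathscr{I}_Y}X$. Thus $(k+1)E'_Y+\epsilon_1\pi_Y^{\prime*}A_0$ stays on the boundary of the $(r_1-1)$-ample cone and Proposition~\ref{prop: pullback} does not apply; enlarging $k$ does not help, and subtracting a small ample $\pi_Z^{*}G$ to force the issue makes the residual $\epsilon_1 H-(k+1)\delta\,\pi_Z^{*}G$ non-nef (it is negative on fibres of $\pi_Y$). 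In short, a single perturbation pulled back from $\Bl_{\mathscr{I}_Z}X$ cannot serve both claims.

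The paper resolves this asymmetry by choosing $A'$ ample on $X$ with $\pi_Z^{\prime*}A'-E'_Z$ ample on $\Bl_{\mathscr{I}_Z}X$ \emph{and} $\pi_Y^{\prime*}A'-E'_Y$ ample on $\Bl_{\mathscr{I}_Y}X$, then building $F'_1=E_Z+\tfrac{1}{3k}\pi_Y^{*}(\pi_Z^{\prime*}A'-E'_Z)-\tfrac{1}{k_1}E_Y$ and $F'_2=E_Z+E_Y+\tfrac{1}{3k}\pi_Z^{*}(\pi_Y^{\prime*}A'-E'_Y)-\tfrac{1}{k_1}E_Z$. Now $F'_2$ is genuinely $(r_1-1)$-ample via Proposition~\ref{prop: pullback}. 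The price is a third coefficient $\lambda_3$ on the common nef class $A=\pi_Y^{*}\pi_Z^{\prime*}A'=\pi_Z^{*}\pi_Y^{\prime*}A'$, and the bookkeeping you allude to is precisely verifying $\lambda_3\ge 0$ throughout the sliding range. Once you replace your single $H$ by this two-sided setup, the rest of your outline goes through.
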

\begin{proof}
We shall use the same notation as in Lemma \ref{lemma: trans ample}.
Since $-E'_Z$ and $-E'_Y$ are $\pi'_Z$-ample and $\pi'_Y$-ample respectively, we may choose an ample divisor $A'$ on $X$ such that $\pi_Z^{\prime*} A'-E'_Z$ and $\pi^{\prime *}_Y A'-E'_Y$ are ample.
Let $A=\pi_Y^*\pi_Z^{\prime*}A'$ be the pullback of $A$ to $\Bl_{\mathscr{I}_Y\cdot\mathscr{I}_Z}X$.
Note that $A$ is nef.

Note that we can write $kE_Z+ A$ as $\pi_Y^{*}\bigl((k+1)E'_Z+(\pi_Z^{\prime*} A'-E'_Z)\bigr)$.
By Lemma \ref{lemma: pass to blowup},
it suffices to prove that given any integer $l\geq0$ and $k\gg0$,
\begin{equation}\label{eq:4.12}
H^{i}\biggl(\Bl_{\mathscr{I}_{Y}\cdot\mathscr{I}_{Z}}X,\mathscr{O}\bigl(m_2(kE_Z+ A)\bigr)\otimes\pi_Y^*\mathscr{O}_{\Bl_{\mathscr{I}_{Z}}X}(-l)\biggr)=0
\end{equation}
for $i>r_1 +r_2 -1$ and $m_2\gg 0$. We fix $l$ and $k$ from this point on.

Fix a large $k_1$ such that $\frac{1}{3k}\pi^{*}_Y(\pi_Z^{\prime *}A'-E'_Z)-\frac{1}{k_1}E_{\widetilde{Y}}$ and
$\frac{1}{3k}\pi_Z^{*}(\pi_Y^{\prime *}A' -E'_Y)-\frac{1}{k_1}E_Z$
are both ample.
It follows that $F'_1:=E_Z + \frac{1}{3k}\pi^{*}_Y(\pi_Z^{\prime *}A'-E'_Z)-\frac{1}{k_1}E_{\widetilde{Y}}$ is $(r_2-1)$-ample when restricted to $E_{\widetilde{Y}}$ and 
$F'_2:=E_Z +E_{\widetilde{Y}} + \frac{1}{3k}\pi_Z^{*}(\pi_Y^{\prime *}A' -E'_Y)-\frac{1}{k_1}E_Z$ is $(r_1 -1)$-ample.
Let $\alpha = 3kk_1 -k_1$ and $\beta = 3kk_1-k_1-3k$.
Let $F_1$ and $F_2$ be integral divisors
$3kk_1\beta F'_1$ and $F_2=3kk_1\alpha F'_2$ respectively.
We may express them as
\[
F_1=\beta(\alpha E_Z-3kE_{\widetilde{Y}}+k_1A)
\]
and
\[
F_2=\alpha(\beta E_Z+\alpha E_{\widetilde{Y}} +k_1A).
\]

Given nonnegative integers $m_1$ and $m_2$, we can express $m_1 E_{\widetilde{Y}}+m_2 (kE_Z+A)$ in terms of $F_1$, $F_2$ and $A$ with some small remaining terms $j_1 E_{\widetilde{Y}} +j_2 E_Z$:
\[
m_1 E_{\widetilde{Y}}+m_2 (kE_Z+A) =\lambda_1 F_1 +\lambda_2 F_2 + \lambda_3 A+ j_1 E_{\widetilde{Y}} +j_2 E_Z,
\]
where the $\lambda$'s and $j$'s are integer-valued functions depending on $m_1$ and $m_2$, with 
\[
\lambda_2=\lfloor\frac{m_1 + 3\beta k\lfloor\frac{m_2 k}{\alpha \beta}\rfloor}{\alpha^2 +3\beta k}\rfloor;
\]
\[
\lambda_1= \lfloor\frac{m_2 k}{\alpha\beta}\rfloor-\lambda_2;
\]
\[
\lambda_3=m_2 -\lambda_1\beta k_1-\lambda_2\alpha k_1;
\]
$j_1=m_1 + 3\beta k \lfloor\frac{m_2 k}{\alpha \beta}\rfloor \bmod (\alpha^2 + 3\beta k)$
and $j_2=m_2 k \bmod \alpha\beta$.
It follows that 
\begin{equation}\label{eq: lambda}
\text{if }0\leq m_1\leq \alpha^2\lfloor\frac{m_2 k}{\alpha \beta}\rfloor\text{, then } \lambda_1\geq 0 \text{ and }\lambda_3\geq 0.
\end{equation}

Since $F_2$ is $(r_1 -1)$-ample and $A$ is nef, 
we may apply Theorem \ref{theorem: fujita}
which gives us a positive integer $\Lambda_2$ such that
\begin{equation}\label{eq: trans nef a}
H^{i}\biggl(\Bl_{\mathscr{I}_{Y}\cdot\mathscr{I}_{Z}}X, \mathscr{O}(\lambda_2 F_2 + \lambda_3 A +j_2 E_Z)\otimes \pi_Y^*\mathscr{O}_{\Bl_{\mathscr{I}_{Z}}X}(-l)\biggr)=0
\end{equation}
for $i>r_1 -1$, $\lambda_2>\Lambda_2$,
$\lambda_3\geq0$ and 
$0\leq j_2<\alpha\beta$.

Since $F_1|_{E_{\widetilde{Y}}}$ is $(r_2 -1)$-ample,
$F_2$ is $(r_1-1)$-ample and
$A$ is nef,
we may apply Theorem \ref{theorem: fujita} again, which gives us a positive integer $\Lambda'_2$ such that
\begin{equation}\label{eq:4.12a}
H^{i}\biggl(E_{\widetilde{Y}}, \mathscr{O}_{E_{\widetilde{Y}}}(\lambda_1 F_1 +\lambda_2 F_2 + \lambda_3 A+ j_1 E_{\widetilde{Y}} +j_2 E_Z)\otimes \pi_Y^*\mathscr{O}_{\Bl_{\mathscr{I}_{Z}}X}(-l)\biggr)=0
\end{equation}
for $i>(r_2 -1) + (r_1 -1)$, $\lambda_2>\Lambda'_2$, $\lambda_1\geq 0$, $\lambda_3\geq 0$, $0\leq j_1<\alpha^2 + 3\beta k$ and $0\leq j_2<\alpha\beta$.

We tensor $\mathscr{O}\bigl(m_2(kE_Z+ A)\bigr)\otimes\pi_Y^*\mathscr{O}_{\Bl_{\mathscr{I}_{Z}}X}(-l)$ with the short exact sequence
\[
0\rightarrow
\mathscr{O}_{\Bl_{\mathscr{I}_{Y}\cdot\mathscr{I}_{Z}}X}(m_1E_{\widetilde{Y}})\rightarrow
\mathscr{O}_{\Bl_{\mathscr{I}_{Y}\cdot\mathscr{I}_{Z}}X}((m_1+1)E_{\widetilde{Y}})
\rightarrow
\mathscr{O}_{E_{\widetilde{Y}}}((m_1+1)E_{\widetilde{Y}})\rightarrow
0
\]
with $\lfloor\frac{3\beta k\lfloor\frac{m_2 k}{\alpha \beta}\rfloor}{\alpha^2 +3\beta k}\rfloor>\Lambda'_2$ and $m_1$ ranges over $0$ to $\alpha^2\lfloor\frac{m_2 k}{\alpha \beta}\rfloor-1$,
then consider the associated long exact sequence.
Thanks to (\ref{eq:4.12a}) and (\ref{eq: lambda}),
it follows that
\begin{multline*}
H^{i}\biggl(\Bl_{\mathscr{I}_{Y}\cdot\mathscr{I}_{Z}}X,\mathscr{O}\bigl(m_2(kE_Z+A)\bigr)\otimes\pi_Y^*\mathscr{O}_{\Bl_{\mathscr{I}_{Z}}X}(-l)\biggr)\\
\cong
H^{i}\biggl(\Bl_{\mathscr{I}_{Y}\cdot\mathscr{I}_{Z}}X,\mathscr{O}\bigl(\alpha^2\lfloor\frac{m_2 k}{\alpha \beta}\rfloor E_{\widetilde{Y}} + m_2(kE_Z+A)\bigr)\otimes\pi_Y^*\mathscr{O}_{\Bl_{\mathscr{I}_{Z}}X}(-l)\biggr)
\end{multline*}
for $i>r_1 + r_2 -1$ and $\lfloor\frac{3\beta k\lfloor\frac{m_2 k}{\alpha \beta}\rfloor}{\alpha^2 +3\beta k}\rfloor>\Lambda'_2$.
Note that the left hand side of the above isomorphism is the same as the cohomology group that appears in (\ref{eq:4.12}).

The above cohomology group can be rewritten as
\[
H^{i}\biggl(\Bl_{\mathscr{I}_{Y}\cdot\mathscr{I}_{Z}}X, \mathscr{O}(\lfloor\frac{m_2 k }{\alpha \beta}\rfloor F_2 + \lambda_3 A +j_2 E_Z)\otimes \pi_Y^*\mathscr{O}_{\Bl_{\mathscr{I}_{Z}}X}(-l)\biggr)
\]
for some integers $\lambda_3\geq0$ and $0\leq j_2<\alpha\beta$.
By (\ref{eq: trans nef a}), the above cohomology group vanishes for $m_2\gg0$.
\end{proof}

The following corollary says that the intersection of $2$ ample (resp. nef) subschemes is ample (resp. nef),
assuming the intersection has the desired codimension.
It generalizes
\cite[Proposition 6.3]{Ottem},
in the sense that we do not assume that $X$ is smooth and the subschemes are l.c.i. in $X$.
\begin{corollary}[Intersection of ample or nef subschemes]\label{cor: intersection of ample}
If $Y$ and $Z$ are both ample (resp. nef) subschemes of $X$, of codimension $r$ and $s$ respectively and $Y\cap Z$ has codimension $r+s$ in $X$, then $Y\cap Z$ is an ample (resp. nef) subscheme of $X$.
\end{corollary}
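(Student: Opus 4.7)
The proof should be essentially immediate from the two main results established earlier in the section, namely Proposition \ref{prop: ample intersect} (restriction of an ample/nef subscheme is ample/nef, provided codimension is preserved) and the transitivity Theorems \ref{theorem: trans ample} and \ref{theorem: trans nef}. The strategy is simply to view $Y\cap Z$ as a two-step subscheme: first as a subscheme of $Z$, then use that $Z$ itself sits inside $X$ with the correct positivity.

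First I would verify the codimension bookkeeping. Write $n=\dim X$. Since $Z$ has codimension $s$ in $X$ and $Y\cap Z$ has codimension $r+s$ in $X$, we have $\dim Z = n-s$ and $\dim(Y\cap Z) = n-r-s$, so $Y\cap Z$ has codimension exactly $r$ in $Z$. This is the hypothesis needed to invoke Proposition \ref{prop: ample intersect} for the ample (resp. nef) subscheme $Y\subset X$ of codimension $r$: it yields that $Y\cap Z$ is an ample (resp. nef) subscheme of $Z$ of codimension $r$.

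Next I would apply transitivity. We now have a chain $Y\cap Z \subset Z \subset X$ in which $Y\cap Z$ is ample (resp. nef) of codimension $r$ in $Z$, and $Z$ is ample (resp. nef) of codimension $s$ in $X$ by hypothesis. Theorem \ref{theorem: trans ample} in the ample case (resp. Theorem \ref{theorem: trans nef} in the nef case) then directly concludes that $Y\cap Z$ is an ample (resp. nef) subscheme of $X$, of codimension $r+s$, as required.

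There is no real obstacle here, since the heavy lifting was already done in the proofs of the transitivity theorems; the only thing to be slightly careful about is that Proposition \ref{prop: ample intersect} requires the codimension of the intersection inside the ambient subscheme to match the codimension of the original ample/nef subscheme, and this matching is guaranteed by the numerical assumption $\mathrm{codim}_X(Y\cap Z)=r+s$ combined with $\mathrm{codim}_X(Z)=s$. The argument works identically in the ample and nef cases, so I would state it once and indicate the parallel in parentheses.
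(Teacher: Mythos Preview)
Your proposal is correct and follows exactly the paper's own argument: apply Proposition~\ref{prop: ample intersect} to see that $Y\cap Z$ is ample (resp.\ nef) in $Z$, then invoke the transitivity Theorems~\ref{theorem: trans ample} and~\ref{theorem: trans nef}. The only difference is that you spell out the codimension bookkeeping explicitly, which the paper leaves implicit.
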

\begin{proof}
By Proposition \ref{prop: ample intersect}, $Y\cap Z$ is an ample (resp. nef) subscheme of $Z$. 
We now conclude using the transitivity property of ample (resp. nef) subschemes (Theorem \ref{theorem: trans ample} and Theorem \ref{theorem: trans nef} respectively).
\end{proof}

\section{Positivity of a line bundle upon restriction to an ample subscheme}
If a line bundle is ample after restricting to an ample subscheme,
it is reasonable to expect the line bundle to exhibit some positivity features in the ambient space.
The following theorem demonstrates an interplay between ample subschemes and $q$-ample divisors.

\begin{theorem}\label{theorem: restriction}
Let $X$ be a projective scheme of dimension $n$ and $Y$ be an ample subscheme of $X$ of codimension $r$.
Suppose $\mathscr{L}$ is a line bundle on $X$, and that its restriction $\mathscr{L}|_{Y}$ to $Y$ is $q$-ample. Then $\mathscr{L}$ is $(q+r)$-ample.
\end{theorem}
\begin{proof}
We fix an ample line bundle $\mathscr{O}_X(1)$ on $X$.
Let $\pi: \widetilde{X}\rightarrow X$ be the blowup of $X$ along $Y$, with $E$ the exceptional divisor.

\begin{step}\label{step: blowup} Pass to the blowup.
\end{step}
 
By Lemma \ref{lemma: pass to blowup}, it suffices to prove that for any integer $l\geq0$,
\begin{equation}\label{blowup}
H^{i}\biggl(\widetilde{X}, \pi^{*}\bigl(\mathscr{L}^{\otimes m}\otimes\mathscr{O}_X(-l)\bigr)\biggr)=0
\end{equation}
for $i>q+r$ and $m\gg 0$.
From now on, we fix $l\geq 0$.

\begin{step}\label{step: exceptional}
Pass to the exceptional divisor.
\end{step}

We claim it is enough to show that there is a positive integer $m_0$ such that
\begin{equation}\label{exceptional}
H^{i}\biggl(E,\pi^{*}\bigl(\mathscr{L}^{\otimes m}\otimes\mathscr{O}_X(-l)\bigr)\otimes\mathscr{O}_{E}(kE)\biggr)=0
\end{equation}
for $i>q+r-1$, $m>m_0$ and $k\geq 1$. 

Indeed, let us consider the following short exact sequence on $\widetilde{X}$:
\begin{multline*}
0\rightarrow \pi^{*}\bigl(\mathscr{L}^{\otimes m}\otimes\mathscr{O}_X(-l)\bigr)\otimes\mathscr{O}_{\widetilde{X}}\bigl((k-1)E\bigr) 
\rightarrow
\pi^{*}\bigl(\mathscr{L}^{\otimes m}\otimes\mathscr{O}_X(-l)\bigr)\otimes\mathscr{O}_{\widetilde{X}}(kE) 
\\
\rightarrow
\pi^{*}\bigl(\mathscr{L}^{\otimes m}\otimes\mathscr{O}_X(-l)\bigr)\otimes\mathscr{O}_{E}(kE)
\rightarrow
0.
\end{multline*}

By looking at the long exact sequence of cohomology groups induced from the above short exact sequence and using Hypothesis (\ref{exceptional}), we observe that
\begin{equation}\label{iso}
H^{i}\biggl(\widetilde{X},\pi^{*}\bigl(\mathscr{L}^{\otimes m}\otimes\mathscr{O}_X(-l)\bigr)\otimes\mathscr{O}_{\widetilde{X}}\bigl((k-1)E)\bigr)\biggr)
\cong H^{i}\biggl(\widetilde{X},\pi^{*}\bigl(\mathscr{L}^{\otimes m}\otimes\mathscr{O}_X(-l)\bigr)\otimes\mathscr{O}_{\widetilde{X}}(kE)\biggr)
\end{equation}
for $i>q+r$, $m>m_0$ and $k\geq 1$.

Since $E$ is $(r-1)$-ample, 
for any fixed $m$,
\[
H^{i}\biggl(\widetilde{X},\pi^{*}\bigl(\mathscr{L}^{\otimes m}\otimes\mathscr{O}_X(-l)\bigr)\otimes\mathscr{O}_{\widetilde{X}}(kE)\biggr)=0
\]
for $k\gg0$ and $i>r-1$. Together with the isomorphism in (\ref{iso}), we have the desired vanishing result (\ref{blowup}).

\begin{step} 
Rewrite the line bundles of interest in ($\ref{exceptional}$) in terms of $q$-ample and $(r-1)$-ample line bundles.
\end{step}

Note that $-E$ is $\pi$-ample.
By Proposition \ref{prop: pullback},
there is an $N>0$ 
such that $\pi^{*}\mathscr{L}^{\otimes N}\otimes \mathscr{O}_{E}(-E)$ is $q$-ample. 
We can replace $\mathscr{L}$ by $\mathscr{L}^{\otimes N}$ and assume that $\pi^{*}\mathscr{L}\otimes \mathscr{O}_E(-E)$ is $q$-ample. 
We now rewrite the line bundle on $E$ in (\ref{exceptional}):
\[
\pi^{*}\bigl(\mathscr{L}^{\otimes m}\otimes\mathscr{O}_X(-l)\bigr)\otimes\mathscr{O}_E(kE)
\cong \pi^{*}\mathscr{O}_X(-l)
\otimes\mathscr{O}_E\bigl((k+m)E\bigr)
\otimes\bigl(\pi^{*}\mathscr{L}\otimes \mathscr{O}_E(-E)\bigr)^{\otimes m}
\]
with the second term $\mathscr{O}_E\bigl((k+m)E\bigr)$ on the right hand side being an $(r-1)$-ample line bundle,
and the third term $\bigl(\pi^{*}\mathscr{L}\otimes \mathscr{O}_E(-E)\bigr)^{\otimes m}$ being a $q$-ample line bundle.

We now apply Theorem \ref{theorem: fujita} with $\mathscr{L}_1:=\pi^{*}\mathscr{L}\otimes \mathscr{O}_E(-E)$, $\mathscr{L}_2=\mathscr{O}_E(E)$ and $M_2=1$ to conclude.
\end{proof}

One may ask whether we have a converse to Theorem \ref{theorem: restriction}, i.e.,
given an $r$-ample line bundle $\mathscr{L}$ on a projective scheme $X$, is there a codimension $r$ ample subscheme $Y$, such that $\mathscr{L}|_{Y}$ is ample?
Demailly, Peternell and Schneider gave a counter-example to this in \cite[Example 5.6]{DPS}:
\begin{example}\label{example: DPS}
Let $S$ be a general quartic surface in $\mathbb{P}^3$.
Let $X=\mathbb{P}(\mathscr{T}_{S})$, where $\mathscr{T}_{S}$ is the tangent bundle of $S$.
They showed that $-K_X$ is $1$-ample, and yet for any ample divisor $Y$ in $X$,
$(-K_X)^2\cdot Y<0$, thus $-K_X$ cannot be ample when it is restricted to any ample divisor.
\end{example}
For the reader's convenience, we shall include the proof of $-K_X$ being $1$-ample in Example \ref{eg:kodaira}. 
In fact, it might be worthwhile to extract from the argument of \cite[Example 5.6]{DPS} the following general property.

\begin{proposition}\label{prop:ses}
Let 
\begin{equation}\label{ses vector bundle}
0\rightarrow \mathscr{E}' \rightarrow \mathscr{E} \rightarrow \mathscr{L} \rightarrow 0
\end{equation}
be a short exact sequence of vector bundles on a projective scheme $X$. 
We assume $\mathscr{E}$ to be a $q$-ample vector bundle of rank $r$, $\mathscr{E}'$ is of rank $(r-1)$ and $\mathscr{L}$ is of rank $1$.
Then $\mathscr{E}'$ is $(q+1)$-ample.

By $q$-ampleness of a vector bundle $\mathscr{E}$, we meant $\mathscr{O}_{\mathbb{P}(\mathscr{E})}(1)$ is $q$-ample, or equivalently
for any coherent sheaf $\mathscr{F}$ on $X$, there is a positive integer $m_0$ such that for any $m\geq m_0$ and $q>i$,
\[
H^{i}(X,\Sym^{m}\mathscr{E}\otimes\mathscr{F})=0.
\]
\end{proposition}
\begin{proof}
We first dualize (\ref{ses vector bundle}), then take symmetric product \cite[Lemma 17.19.4]{Stacks} to get a right exact sequence:
\[
\Sym^{k-1} \mathscr{E}^{\vee}\otimes \mathscr{L}^{\vee}
\rightarrow
\Sym^k \mathscr{E}^{\vee}
\rightarrow
\Sym^k \mathscr{E}'^{\vee}\rightarrow 0.
\]
It is easy to see that $\rk(\Sym^{k-1} \mathscr{E}^{\vee}\otimes \mathscr{L}^{\vee})
=
\rk(\Sym^k \mathscr{E}^{\vee})-
\rk(\Sym^k \mathscr{E}'^{\vee})$.
Therefore the above exact sequence is left exact as well.
Dualizing, this gives us the following short exact sequence
\[
0\rightarrow \Sym^k \mathscr{E}'
\rightarrow \Sym^k \mathscr{E}
\rightarrow \Sym^{k-1} \mathscr{E}\otimes \mathscr{L}
\rightarrow 0.
\]
Fix an ample line bundle $\mathscr{O}_X(1)$ on $X$, and tensor the above short exact sequence with $\mathscr{O}_X(-l)$, for $l\geq0$.
Then consider the associated long exact sequence.
Note that 
$H^{i}\bigl(X,\Sym^k\mathscr{E}\otimes\mathscr{O}_X(-l)\bigr)= H^{i}\bigl(X,\Sym^{k-1}\mathscr{E}\otimes \mathscr{L}\otimes\mathscr{O}_X(-l)\bigr)=0$,
for $i>q$ and $k\gg 0$.
Hence $H^{i}\bigl(X,\Sym^k\mathscr{E}'\otimes \mathscr{O}_X(-l)\bigr)=0$, for $i>q+1$ and $k\gg0$.
\end{proof}

\begin{example}\label{eg:kodaira}
Let $S\subset \mathbb{P}^{n+1}$ be a smooth hypersurface of degree $(n+2)$, with $\dim S=n\geq 2$
($n=2$ in Example \ref{example: DPS}).
Let $X=\mathbb{P}(\mathscr{T}_S)$, where $\mathscr{T}_S$ is the tangent bundle of $S$.
Then
\[
\mathscr{O}_X(-K_X)=\mathscr{O}_{\mathbb{P}(\mathscr{T}_S)}(n).
\]
Applying Proposition \ref{prop:ses} to the normal bundle exact sequence:
\[
0\rightarrow \mathscr{T}_S \rightarrow
\mathscr{T}_{\mathbb{P}^{n+1}}|_{S} \rightarrow
\mathscr{O}_S(S) \rightarrow 0,
\]
it follows that the tangent bundle of $S$ is $1$-ample,
hence $\mathscr{O}_X(-K_X)$ is $1$-ample as well.
Note that $\mathscr{T}_S$ is not ample,
as its top exterior power is trivial.
Therefore, $\mathscr{O}_X(-K_X)$ is not ample.
%
%

Ottem gave a counterexample to Kodaira-type vanishing theorem for $q$-ample divisors \cite[Chapter 9]{Ottem}:
More specifically,
he found a $1$-ample line bundle $\mathscr{L}$ on a smooth threefold $X$
such that $H^{2}(X,\omega_X\otimes \mathscr{L})\neq 0$.
In the above example, we note that 
\[
H^{n}(X,K_X -K_X)\cong
H^{n}(S,\mathscr{O}_S)
\cong 
H^{0}(S,\mathscr{O}_S(K_S))^{\vee}
\cong
H^{0}(S,\mathscr{O}_S)^{\vee}\neq 0,
\]
but $-K_X$ is $1$-ample. Thus, we obtain another example where Kodaira-type vanishing theorem fails for $q$-ample divisors.

However,
Greb and K\"uronya
\cite[Theorem 3.8]{GK}
showed that Kodaira-type vanishing theorem holds for \textit{big} $q$-ample $\mathbb{Q}$-divisors.
On toric varieties, Broomhead, Ottem and Prendergast-Smith proved that Kodaira-type vanishing theorem holds for $q$-ample divisors \cite[Theorem 6.1]{BOP}.
\end{example}

\begin{example}
One may ask whether we can relax the positivity assumption on $Y$ in Theorem \ref{theorem: restriction}. 
For example, if we only assume that the normal bundle of $Y$ is ample,
we shall see the conclusion of the theorem does not hold in general.
Let us start with a smooth ample subvariety $Y\subset X$ of a smooth projective variety.
We blow up a closed point $p$ in $X\setminus Y$.
Observe that the normal bundle of $Y\subset \Bl_{p} X$ is still ample.
Let $E\cong\mathbb{P}^{n-1}$ be the exceptional divisor, and let $A$ be an ample divisor on $\Bl_{p}(X)$.
Then $E+\epsilon A$ is not $(n-2)$-ample, for $0<\epsilon\ll1$, since it is anti-ample when restricted to the exceptional divisor.
But $(E+\epsilon A)|_Y=\epsilon A|_{Y}$ is ample.

On the other hand, as we shall see in the following section, a small yet interesting part of the theorem still holds if we assume $Y$ is a nef subvariety.
\end{example}

\section{Restriction of a pseudoeffective divisor to a nef subvariety}\label{s6}

There are not many results regarding the positivity of subvariety with nef normal bundle, in terms of intersection theory. 
Here are two of such results the author is aware of.

In Fulton-Lazarsfeld's work \cite{FulLaz} (see also \cite[Theorem 8.4.1]{Laz}),
they proved that if $Y$ is a closed, l.c.i. subvariety of a projective variety $X$ and the normal bundle of $Y$ is nef, then for any closed subscheme $Z\subset X$ with $\dim Y + \dim Z \geq \dim X$, $\deg_H(Y\cdot Z)\geq 0$.
(Here $H$ is an ample divisor on $X$.)
On the other hand, it is not hard to show that if $Y$ has globally generated normal bundle, then restriction of any effective cycle to $Y$ is either effective or $0$ \cite[Theorem 12.1.a)]{Fulton}.

We now show that the restriction of a pseudoeffective divisor to a nef subvariety is still pseudoeffective. 

\begin{theorem}\label{theorem: pseudoeffective}
Let $Y$ be a nef subvariety of codimension $r$ of a projective variety $X$. 
Then 
\[
\iota^{*}\overline{\Eff}^{1}(X)\subseteq\overline{\Eff}^{1}(Y)
\]
and
\[
\iota^{*}\Big1 (X)\subseteq\Big1 (Y).
\]
Here $\iota:Y\hookrightarrow X$ is the inclusion map,
$\iota^{*}:\N^1(X)_{\mathbf{R}}\rightarrow\N^1(Y)_{\mathbf{R}}$ is the induced map on the N\'eron-Severi group with $\mathbf{R}$-coefficients and $\overline{\Eff}^{1}(X)$ (resp. $\Big1(X)$) is the cone of pseudoeffective (resp. big) $\mathbf{R}$-Cartier $\mathbf{R}$-divisors.
\end{theorem}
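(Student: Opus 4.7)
The plan is to deduce the big inclusion from the pseudoeffective one and then attack the pseudoeffective statement by a contrapositive argument using the exceptional divisor of the blow-up of $Y$. For the big part: if $D$ is big on $X$, pick an ample $H$ on $X$ and a small $\epsilon > 0$ so that $D - \epsilon H$ is pseudoeffective; then $D|_Y = (D - \epsilon H)|_Y + \epsilon H|_Y$ is a pseudoeffective class plus an ample class on $Y$, hence big. So the task reduces to the pseudoeffective inclusion. For that I argue by contrapositive: if $D|_Y$ is not pseudoeffective, Theorem \ref{theorem: n-1 ample} says $-D|_Y$ is $(n-r-1)$-ample on $Y$, and the aim becomes showing that $-D$ is $(n-1)$-ample on $X$.

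Write $\pi \colon \tilde X = \Bl_Y X \to X$ with exceptional divisor $E$, and fix an ample divisor $H$ on $X$. Lemma \ref{lemma: pass to blowup} reduces the $(n-1)$-ampleness of $-D$ to a single cohomological vanishing: for every fixed $l \geq 0$,
\[
H^n\!\bigl(\tilde X,\; \pi^*\mathscr{O}_X(-mD - lH)\bigr) = 0 \qquad (m \gg 0).
\]
I will obtain this by iterating the short exact sequences $0 \to \mathscr{G}((k-1)E) \to \mathscr{G}(kE) \to \mathscr{G}(kE)|_E \to 0$, where $\mathscr{G} := \pi^*\mathscr{O}_X(-mD - lH)$. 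Since $\dim E = n - 1$, the group $H^n(E, -)$ vanishes automatically, so the iteration will produce isomorphisms $H^n(\tilde X, \mathscr{G}) \cong H^n(\tilde X, \mathscr{G}(kE))$ for every $k \geq 0$, provided I establish the top-cohomology vanishing $H^{n-1}(E, \mathscr{G}(kE)|_E) = 0$ uniformly in $k$ for large $m$.

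That last vanishing is precisely where Proposition \ref{lemma: uniform2} will do the work. After replacing $D$ by a suitable positive integer multiple (via Proposition \ref{prop: pullback} applied to $\pi|_E \colon E \to Y$ with the $\pi|_E$-ample divisor $-E|_E$), the line bundle $\mathscr{L}_1 := \pi|_E^*\mathscr{O}_Y(-D|_Y) \otimes \mathscr{O}_E(-E)$ becomes $(n-r-1)$-ample on $E$, while nefness of $Y$ makes $\mathscr{L}_2 := \mathscr{O}_E(E)$ into an $(r-1)$-almost-ample line bundle. Rewriting
\[
\mathscr{G}(kE)|_E \cong \mathscr{L}_1^{\otimes m} \otimes \mathscr{L}_2^{\otimes(m+k)} \otimes \pi|_E^*\mathscr{O}_Y(-lH|_Y),
\]
the identity $(n-r-1) + (r-1) = \dim E - 1$ matches the numerical hypothesis of Proposition \ref{lemma: uniform2}; applying it with $M_2 = 1$ yields the desired vanishing for every $k \geq 0$ and every $m$ above some $M_1 = M_1(l)$.

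To close the iteration I will choose $k = Mm$ for a large integer $M$ and prove $H^n(\tilde X, \mathscr{G}(MmE)) = 0$ for $m \gg 0$. The rewriting
\[
\mathscr{G}(MmE) \cong \bigl(\pi^*\mathscr{O}_X(-D) \otimes \mathscr{O}_{\tilde X}(ME)\bigr)^{\otimes m} \otimes \pi^*\mathscr{O}_X(-lH)
\]
reduces the task to showing $\mathscr{M}_M := \pi^*\mathscr{O}_X(-D) \otimes \mathscr{O}_{\tilde X}(ME)$ is $(n-1)$-ample on $\tilde X$ for some $M$, equivalently (Theorem \ref{theorem: n-1 ample}) that $\pi^*D - ME$ is not pseudoeffective on $\tilde X$. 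Since $\tfrac{1}{M}(\pi^*D - ME) \to -E$ in $N^1(\tilde X)_{\mathbf{R}}$ as $M \to \infty$ and $\overline{\Eff}^1(\tilde X)$ is closed, it suffices to check that $-E$ itself is not pseudoeffective on $\tilde X$. For this the argument I have in mind is direct: for any ample class $A$ on $\tilde X$ the complete-intersection curve class $A^{n-1}$ pairs nonnegatively with every effective (and hence, by closure, every pseudoeffective) divisor, because each irreducible $(n-1)$-dimensional component $F$ contributes $(A|_F)^{n-1} \geq 0$; however $(-E) \cdot A^{n-1} = -(A|_E)^{n-1} < 0$, since $A|_E$ is ample on $E$. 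The main obstacle in the whole proof is precisely this cap-vanishing step; once it is in hand, the rest is a fairly routine deployment of Proposition \ref{lemma: uniform2}, Proposition \ref{prop: pullback}, and Lemma \ref{lemma: pass to blowup}.
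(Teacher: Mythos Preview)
Your proposal is correct and follows essentially the same route as the paper: reduce big to pseudoeffective, argue by contrapositive via Theorem~\ref{theorem: n-1 ample}, pass to the blow-up through Lemma~\ref{lemma: pass to blowup}, iterate the exceptional-divisor sequence, and invoke Proposition~\ref{lemma: uniform2} with $\mathscr{L}_1=\pi|_E^*\mathscr{O}_Y(-D)\otimes\mathscr{O}_E(-E)$ and $\mathscr{L}_2=\mathscr{O}_E(E)$.

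The only divergence is in the closing step. You work to show that $\pi^*(-D)+ME$ is $(n-1)$-ample for some large $M$ by proving $-E$ is not pseudoeffective and then perturbing. The paper short-circuits this: once you know $-E$ is not pseudoeffective, $E$ itself is $(n-1)$-ample on $\tilde X$ by Theorem~\ref{theorem: n-1 ample}, so for each \emph{fixed} $m$ one has $H^n(\tilde X,\mathscr{G}(kE))=0$ for $k\gg 0$ directly from the definition of $(n-1)$-ampleness. Combined with the isomorphisms $H^n(\tilde X,\mathscr{G})\cong H^n(\tilde X,\mathscr{G}(kE))$ you already established, this finishes without introducing the auxiliary $M$ or the limit argument in $N^1(\tilde X)_{\mathbf{R}}$. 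Your version works, but the paper's is a line shorter and avoids any interplay between the thresholds for $m$ and $k$.
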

\begin{remark}
Before proving the theorem, let us point out it is rather straightforward to obtain the conclusion from Theorem \ref{theorem: restriction} under the stronger assumption that $Y$ is an \underline{ample} subscheme of $X$ and both $X$ and $Y$ are varieties.
Let $D$ be a pseudoeffective divisor on $X$, i.e. $-D$ is not $(n-1)$-ample (Theorem \ref{theorem: n-1 ample}).
Suppose on the contrary $D|_{Y}$ is not pseudoeffective.
Then $-D|_{Y}$ is $(n-r-1)$-ample.
This gives a contradiction to Theorem \ref{theorem: restriction}.
\end{remark}
\begin{proof}[Proof of Theorem \ref{theorem: pseudoeffective}]
A divisor is big if and only if it can be written as the sum of a pseudoeffective divisor and an ample divisor.
Therefore, we can focus on the pseudoeffective case.
We shall follow the steps in the proof of Theorem \ref{theorem: restriction} closely.
Recall that a Cartier divisor $D$ is $(n-1)$-ample if and only if $-D$ is not pseudoeffective (Theorem \ref{theorem: n-1 ample}).
Therefore, it suffices to show that
given a line bundle $\mathscr{L}$ on $X$ such that $\mathscr{L}|_Y$ is $(n-r-1)$-ample,
$\mathscr{L}$ has to be $(n-1)$-ample.
Fix an ample line bundle $\mathscr{O}_{X}(1)$ on $X$.
Let $\widetilde{X}=\Bl_Y X$.
\setcounter{step}{0}
\begin{step}[Pass to the blowup]
By Lemma \ref{lemma: pass to blowup},
it suffices to show for any integer $l\geq 0$,
there is an integer $m_0$ such that $H^{n}\biggl(\widetilde{X},\pi^{*}\bigl(\mathscr{L}^{\otimes m}\otimes \mathscr{O}_{X}(-l)\bigr)\biggr)=0$ for $m\geq m_0$.
\end{step}

We now fix $l$. 
\begin{step}[Pass to the exceptional divisor]
It is enough to show that there is a positive integer $m_0$ such that
\[
H^{n-1}\biggl(E, \pi^{*}\bigl(\mathscr{L}^{\otimes m}\otimes \mathscr{O}_{X}(-l)\bigr)\otimes\mathscr{O}_E(kE)\biggr)=0
\]
for $m\geq m_0$ and $k\geq 1$.
\end{step}
We just have to repeat the argument in Step \ref{step: exceptional} in the proof of Theorem \ref{theorem: restriction}, i.e., consider the long exact sequence of cohomologies associated to
\begin{multline*}
0\rightarrow
\pi^{*}\bigl(\mathscr{L}^{\otimes m}\otimes\mathscr{O}_{X}(-l)\bigr)\otimes \mathscr{O}_{\widetilde{X}}\bigl((k-1)E\bigr)\rightarrow
\pi^{*}\bigl(\mathscr{L}^{\otimes m}\otimes\mathscr{O}_{X}(-l)\bigr)\otimes \mathscr{O}_{\widetilde{X}}(kE)\\
\rightarrow
\pi^{*}\bigl(\mathscr{L}^{\otimes m}\otimes \mathscr{O}_{X}(-l)\bigr)\otimes\mathscr{O}_E(kE)
\rightarrow 0.
\end{multline*}

Also note that for a fixed integer $m$,
\[
H^{n}\biggl(\widetilde{X},\pi^{*}\bigl(\mathscr{L}^{\otimes m}\otimes\mathscr{O}_{X}(-l)\bigr)\otimes \mathscr{O}_{\widetilde{X}}(kE)\biggr)=0
\]
for $k\gg 0$. 
Indeed, $E$ is $(n-1)$-ample ($-E$ is not pseudoeffective!).
\begin{step}[Rewrite the line bundle in question in terms of an $(n-r-1)$-ample line bundle and an $(r-1)$-almost ample line bundle]
\end{step}
Replacing $\mathscr{L}$ with $\mathscr{L}^{\otimes N}$ for $N$ large enough, we may assume $\pi^{*}\mathscr{L}\otimes\mathscr{O}_{E}(-E)$ is $(n-r-1)$-ample, by Proposition \ref{prop: pullback}.
Now we can write
\[
\pi^{*}\bigl(\mathscr{L}^{\otimes m}\otimes \mathscr{O}_{X}(-l)\bigr)\otimes \mathscr{O}_{E}(kE)
\cong
\pi^*\mathscr{O}_{X}(-l)\otimes
\bigl(\pi^{*}\mathscr{L}\otimes\mathscr{O}_{E}(-E)\bigr)^{\otimes m}
\otimes
\mathscr{O}_E\bigl((k+m)E\bigr).
\]

By Proposition \ref{lemma: uniform2}, there is a positive integer $m_0$ such that
\[
H^{n-1}\biggl(E,\pi^*\mathscr{O}_{X}(-l)\otimes
\bigl(\pi^{*}\mathscr{L}\otimes\mathscr{O}_{E}(-E)\bigr)^{\otimes m}\otimes
\mathscr{O}_E\bigl((k+m)E\bigr)\biggr)=0
\]
for $k\geq 1$ and $m\geq m_0$. This proves the theorem.

\end{proof}

\begin{example}
It is possible that the normal bundle of $Y$ in $X$ is antiample but the restriction of any pseudoeffective divisor on $X$ to $Y$ is still pseudoeffective.

Take a general hypersurface $X\subset \mathbb{P}^{n+1}$ of degree $d=2n-1$, where $n\geq 3$.
It is a classical result that $X$ contains some line $l$ (see for e.g. \cite[Corollary 2.2]{bor}).
The line $l$ has anti-ample normal bundle $\bigoplus\mathscr{O}(-1)$:
Indeed, the computation of \cite[Proof of Proposition 2.1]{bor} showed that the normal bundle of $l$ does not have global sections,
while the degree of the normal bundle is $-n+1$ by the adjunction formula.
On the other hand, it follows from Lefschetz hyperplane theorem that $\Pic(X)= \mathbb{Z}\cdot \mathscr{O}_X(1)$.
Hence, the intersection number of any pseudoeffective divisor on $X$ with $l$ must be non-negative.

%
\end{example}

Boucksom, Demailly, P{\u{a}}un and Peternell showed that the dual cone of the pseudoeffective cone is the cone of movable curves \cite{BDPP}. Hence we have the equivalent statement:
\begin{corollary}\label{cor: movable curves}
With the same assumptions as in Theorem \ref{theorem: pseudoeffective},
the map on the numerical equivalence
classes of $1$-cycles,
induced by the inclusion $\iota:Y\hookrightarrow X$,
$\iota_{*}:\N_1(Y)\rightarrow \N_1(X)$, restricts to $\iota_*: \overline{\Mov}_{1}(Y)\rightarrow \overline{\Mov}_{1}(X)$, where $\overline{\Mov}_{1}(Y)$ and $\overline{\Mov}_{1}(X)$ are the cones of movable curves in $Y$ and $X$ respectively.
\end{corollary}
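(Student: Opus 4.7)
The plan is to deduce this corollary from Theorem \ref{theorem: pseudoeffective} by invoking BDPP duality on both sides. Recall that by \cite{BDPP}, the cone $\overline{\Mov}_1(W)$ on a projective variety $W$ is the dual of $\overline{\Eff}^1(W)$ under the intersection pairing. So to show that $\iota_*\alpha$ lies in $\overline{\Mov}_1(X)$ for each $\alpha \in \overline{\Mov}_1(Y)$, it is enough to verify that $D \cdot \iota_* \alpha \ge 0$ for every pseudoeffective divisor class $D \in \overline{\Eff}^1(X)$.

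The main computation is then a one-line application of the projection formula: for $D$ a Cartier divisor on $X$ and $\alpha$ a $1$-cycle on $Y$, one has
\[
D \cdot \iota_* \alpha \;=\; \iota^* D \cdot \alpha,
\]
where the right-hand pairing takes place inside $Y$. By Theorem \ref{theorem: pseudoeffective}, pulling back along $\iota$ sends $\overline{\Eff}^1(X)$ into $\overline{\Eff}^1(Y)$, so $\iota^* D$ is pseudoeffective on $Y$. Since $\alpha \in \overline{\Mov}_1(Y) = \overline{\Eff}^1(Y)^\vee$ by BDPP applied to $Y$, we conclude $\iota^* D \cdot \alpha \ge 0$, and hence $D \cdot \iota_* \alpha \ge 0$, as desired. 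By bilinearity and continuity the same inequality extends to all of $\overline{\Mov}_1(Y)$, giving the corollary.

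The only genuine subtlety is that BDPP is ordinarily phrased for smooth projective varieties, while here $Y$ could have singularities. The standard workaround is to pass to a resolution $\mu : Y' \to Y$, write $\alpha = \mu_* \alpha'$ for $\alpha' \in \overline{\Mov}_1(Y')$, and then apply the projection formula twice, once for $\mu$ and once for $\iota$. In the same spirit, if one wants to be careful about the convex-cone structure rather than individual elements, it suffices to check the inclusion on a dense subset of generators of $\overline{\Mov}_1(Y)$, for instance on strongly movable curves, and then take closure; the whole argument above is compatible with this reduction. Modulo these standard foundational points, the proof really is just projection formula applied to Theorem \ref{theorem: pseudoeffective}.
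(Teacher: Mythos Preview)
Your proof is correct and follows the same approach as the paper: the corollary is presented there simply as the dual statement to Theorem~\ref{theorem: pseudoeffective} via the BDPP characterization of $\overline{\Mov}_1$ as the dual of $\overline{\Eff}^1$, and your argument makes this dualization explicit through the projection formula. Your discussion of the singular case is more careful than the paper, which invokes \cite{BDPP} for possibly singular $X$ and $Y$ without comment (the extension to singular varieties is known, e.g.\ via Fulger--Lehmann).
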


We may also apply the adjunction formula to get
\begin{corollary}
If both $X$ and $Y$ are non-singular, $Y$ has nef normal bundle and $K_X$ is pseudoeffective, then $K_Y$ is also pseudoeffective.
If $K_X$ is big, then $K_Y$ is also big.
\end{corollary}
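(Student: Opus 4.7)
The plan is to deduce this as a direct consequence of Theorem \ref{theorem: pseudoeffective} together with the adjunction formula. Since $X$ and $Y$ are both smooth and $Y \subset X$ has nef normal bundle $\mathscr{N}_{Y/X}$, the subvariety $Y$ is in particular lci, hence by the remark following the definition of nef subscheme (combined with Proposition \ref{prop: vector bundle}), $Y$ is a nef subvariety of $X$. Thus Theorem \ref{theorem: pseudoeffective} applies and tells us that $K_X|_Y$ is pseudoeffective (resp.\ big) whenever $K_X$ is.

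Next, I would invoke the higher-codimension adjunction formula for smooth subvarieties:
\[
K_Y \;\equiv\; K_X|_Y \,+\, \det \mathscr{N}_{Y/X}.
\]
Since $\mathscr{N}_{Y/X}$ is a nef vector bundle, its determinant line bundle $\det \mathscr{N}_{Y/X}$ is nef (this is a standard fact: for a nef bundle $\mathscr{E}$ of rank $r$, the line bundle $\det \mathscr{E}$ equals $c_1(\mathscr{O}_{\mathbb{P}(\mathscr{E}^\vee)}(1))$ pushed forward appropriately, and nefness is inherited).

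Finally, I would combine these two inputs: a nef divisor plus a pseudoeffective divisor is pseudoeffective, and a nef divisor plus a big divisor is big (because pseudoeffectiveness and bigness are preserved under addition of nef classes, and the big cone is the interior of the pseudoeffective cone plus any nef shift remains in the interior). Therefore $K_Y = K_X|_Y + \det \mathscr{N}_{Y/X}$ is pseudoeffective in the first case and big in the second.

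The argument is essentially a one-line assembly and I do not foresee a genuine obstacle; the only step requiring care is verifying that the hypothesis ``$Y$ has nef normal bundle'' translates into the hypothesis ``$Y$ is a nef subvariety'' needed by Theorem \ref{theorem: pseudoeffective}, but this is exactly what the remark after the definition of nef subscheme records, via Proposition \ref{prop: vector bundle}.
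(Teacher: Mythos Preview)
Your proposal is correct and follows exactly the approach the paper intends: the paper simply says ``we apply the adjunction formula to get'' the corollary, and you have spelled out the details---use Theorem~\ref{theorem: pseudoeffective} to see $K_X|_Y$ is pseudoeffective (resp.\ big), then add the nef class $\det\mathscr{N}_{Y/X}$ via adjunction. The only cosmetic point is that your parenthetical justification for $\det\mathscr{N}_{Y/X}$ being nef is a bit garbled; it is cleaner to cite the standard fact (e.g.\ \cite[Theorem~6.2.12]{Laz}) that the determinant of a nef vector bundle is a nef line bundle.
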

\begin{remark}
The first assertion in the above corollary follows also from \cite{BDPP} and the theory of deformations of rational curves.
More specifically, Boucksom-Demailly-P\u{a}un-Peternell showed that on a smooth projective variety $Z$,
$K_Z$ is pseudoeffective if and only if $Z$ is not uniruled.
If $Y$ is uniruled, take a smooth rational curve $C$ that covers $Y$.
By considering the short exact sequence of normal bundles on $C$, we see that the normal bundle of $C$ in $X$ is nef.
Thus, $X$ is uniruled.
\end{remark}

\section{Weakly movable cone}\label{s7}
We shall define and study the weakly movable cone.
In this section, we assume the ground field $k$ is algebraically closed and of characteristic zero.
On a smooth projective variety, we know that the movable cone of divisors is the smallest closed convex cone that contains all the pushforwards of nef divisors from $\pi:X_{\pi}\rightarrow X$,
where $\pi$ ranges over all projective, dominant and generically finite morphisms to $X$.
With this in mind, we define the weakly movable cone as the closure of the cone that is generated by the pushforward of cycles of nef subvariety via a proper dominant morphism.
We find that it contains the movable cone and satisfies some desirable intersection theoretic properties.

\begin{definition}[Cycles modulo numerical equivalence]
Let $X$ be a projective variety over $k$ and $\Z_d(X)$ be the abelian group of $d$-cycles on $X$ with $\mathbb{Z}$-coefficients.
A $d$-cycle $\alpha\in \Z_d(X)$ is defined to be \textit{numerically trivial} if
\[
\deg(P\cap \alpha)=0,
\]
for any weight $d$ homogeneous polynomial $P$ in Chern classes of finite set of vector bundles on $X$ \cite[Definition 19.1]{Fulton}.
The \textit{numerical group of $d$-cycles with integer coefficients}, $\N_d(X)_{\mathbb{Z}}$, is defined to be the quotient of $\Z_d(X)$ modulo numerically trivial $d$-cylces.
This is a free abelian group of finite rank \cite[Example 19.1.4]{Fulton}.
The \textit{numerical group of $d$-cycles}, $\N_d(X)$ is then defined to be
$\N_d(X)_{\mathbb{Z}}\otimes_{\mathbb{Z}}\mathbb{R}$.
\end{definition}

We then recall Fulger-Lehmann's definition \cite{FL} of a family of effective cycles.
\begin{definition}[Family of effective cycles]
Let $X$ be a projective variety over $k$.
A \textit{family of effective $d$-cycles} on $X$ with $\mathbb{Z}$-coefficient, 
$(g: U\rightarrow W)$,
consists of a closed reduced subscheme $\Supp U$ of $W\times_{k} X$, where $W$ is a variety over $k$;
a coefficient $a_i\in \mathbb{Z}_{>0}$ for each irreducible component $U_i$ of $\Supp U$;
and the projections $g_i: U_i\rightarrow W$ are proper and flat of relative dimension $d$.

Over a closed point $w\in W$, one can identify $g_i^{-1}(w)$ as a closed subscheme of $X$. Its fundamental cycle $[g_i^{-1}(w)]$ is a $d$-cycle of $X$.
We define the \textit{cycle theoretic fiber} over $w$ to be $\sum a_i [g_i^{-1}(w)]$.

We say that the family of effective $d$-cycles is \textit{irreducible} if $\Supp U$ is irreducible.
\end{definition}
\begin{definition}[Strictly movable cycles {\cite[Definition 3.1]{FL}}]
A family of effective $d$-cycles of $X$ $(g: U\rightarrow W)$ is \textit{strictly movable} if each of the irreducible components $U_i$ of $\Supp U$ dominates $X$ via the second projection.

An effective $d$-cycle of $X$ (with $\mathbb{Z}$-coefficient) is \textit{strictly movable} if it is the cycle theoretic fiber over a closed point of a strictly movable family of $d$-cycles on $X$.

The \textit{movable cone of $d$-cycles}
$\overline{\Mov}_d(X)\subset \N_{d}(X)$ is defined to be the closure of the convex cone generated by strictly movable $d$-cycles.
\end{definition}

\begin{proposition}\label{prop: irreducible}
The movable cone of $d$-cycles is the closure of the convex cone generated by irreducible, strictly movable $d$-cycles.
\end{proposition}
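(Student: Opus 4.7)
The plan is to establish the nontrivial inclusion, namely that every strictly movable $d$-cycle lies in the closed convex cone generated by cycles that are simultaneously irreducible and strictly movable; the opposite inclusion is automatic. The argument will have two parts: a combinatorial reduction to irreducible strictly movable families, followed by the substantive geometric step of refining such a family so that its fibers themselves become irreducible.

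For the reduction, if $(g:U\to W)$ is a strictly movable family with irreducible components $U_i$ of $\Supp U$ and coefficients $a_i\in\mathbb{Z}_{>0}$, each $(g_i:U_i\to W)$ is itself an irreducible strictly movable family by the very definition (each component of $\Supp U$ dominates $X$). So the cycle-theoretic fiber $\sum_i a_i [g_i^{-1}(w)]$ is already a non-negative integer combination of fibers of irreducible strictly movable families, and I may assume $U$ is irreducible.

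For the main step, given an irreducible strictly movable family $(g:U\to W)$, I will use generic smoothness in characteristic zero to pass to a dense open $W^0\subseteq W$ on which $g$ is smooth; set $U^0=g^{-1}(W^0)$, which is open dense in $U$. Then I take the Stein factorization $U^0\xrightarrow{f}\tilde{W}\xrightarrow{h}W^0$, with $h$ finite \'etale and $f$ smooth proper with geometrically connected fibers. Irreducibility of $U^0$ forces $\tilde{W}$ to be connected, so $\tilde{W}$ is an irreducible variety over $k$. The graph $(\mathrm{id},f):U^0\hookrightarrow U^0\times_{W^0}\tilde{W}$ is a section of the \'etale projection onto $U^0$, so $U^0$ embeds as an open-and-closed subscheme $\tilde{U}\subseteq U^0\times_{W^0}\tilde{W}$; since the latter is closed in $\tilde{W}\times X$ (being the base change of the closed immersion $U^0\subseteq W^0\times X$), $\tilde{U}$ is closed in $\tilde{W}\times X$. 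The first projection $\tilde{g}:\tilde{U}\to\tilde{W}$ is smooth proper of relative dimension $d$, its fibers are exactly the connected (hence irreducible) components of the fibers of $g|_{U^0}$, and the second projection $\tilde{U}\to X$ agrees under the identification $\tilde U\cong U^0$ with the dominant map $U^0\to X$. Hence $(\tilde{g}:\tilde{U}\to\tilde{W})$ is an irreducible strictly movable family whose cycle-theoretic fibers are irreducible.

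To finish, for a general $w_0\in W^0$ the smooth fiber decomposes as $g^{-1}(w_0)=\bigsqcup_\beta Z_\beta$, so $[g^{-1}(w_0)]=\sum_\beta [Z_\beta]$; each $[Z_\beta]$ is realized as $[\tilde{g}^{-1}(\tilde{w}_\beta)]$ for a suitable $\tilde{w}_\beta\in h^{-1}(w_0)$, hence is an irreducible strictly movable cycle. Constancy of the fiber class in a flat family gives the same decomposition for every closed $w\in W$, so the class of any strictly movable cycle is a non-negative $\mathbb{Z}$-combination of irreducible strictly movable cycle classes. The hard part will be to justify carefully that $\tilde{U}$ is indeed closed in $\tilde{W}\times X$ and that its image in $X$ remains dense---i.e., that the \'etale monodromy which produces the individual components does not destroy strict movability---since these technical checks are the backbone of the whole construction.
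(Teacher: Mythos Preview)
Your reduction to irreducible families is fine, but the geometric step has a genuine gap: generic smoothness requires the source $U$ to be smooth over $k$, and nothing in the definition of a strictly movable family guarantees this. Take $W=\mathbb{A}^1\setminus\{0\}$ with coordinate $t$, $X=\mathbb{P}^2$, and $U=\{x^2-ty^2=0\}\subset W\times\mathbb{P}^2$. This is an irreducible strictly movable family of $1$-cycles (the generic fiber is an irreducible conic over $k(t)$ since $t$ is not a square, and $U$ dominates $\mathbb{P}^2$), yet every closed fiber is a pair of lines through $[0{:}0{:}1]$. No fiber is smooth, $U$ itself is singular along $\{x=y=0\}$, and your open set $W^0$ is empty.

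Dropping the smoothness hypothesis and running Stein factorization directly does not help either: Stein factorization separates \emph{connected} components of fibers, not irreducible ones, and in the example above each fiber is connected, so the factorization is trivial and you are back where you started. The implication ``connected $+$ smooth $\Rightarrow$ irreducible'' is exactly what you lose. The paper avoids this by working with the geometric generic fiber instead: one passes to a finite extension $K/k(\eta_W)$ over which the irreducible components of $U_\eta\otimes\overline{k(\eta_W)}$ are already defined and geometrically integral, spreads this out to a variety $V$ with function field $K$ mapping to $W$, shrinks $V$ for flatness, and then invokes \cite[Th\'eor\`eme 9.7.7]{EGAIV} to conclude that each component of $V\times_W U$ has integral general closed fiber. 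In the example this is just the base change along $s\mapsto s^2$, after which the family visibly splits into two families of lines, each dominating $\mathbb{P}^2$.
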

\begin{proof}
Suppose $\sum a_i Z_i$ is the cycle theoretic fiber over a closed point of a family of strictly movable $d$-cycles $(g: U\rightarrow W)$ with irreducible components $U_i$.
It suffices to show that $Z_i$ is algebraically equivalent to a sum of irreducible strictly movable $d$-cycles.
If the generic fiber of $p_i:U_i\rightarrow W$ is geometrically integral, then the fiber over a general closed point is also (geometrically) integral \cite[Th\'eor\`eme 9.7.7]{EGAIV},
and we are done.

Suppose the generic fiber of $p_i$ is not geometrically integral.
Let $\eta_W$ be the generic point of $W$, $\overline{k(\eta_W)}$ be the algebraic closure of $k(\eta_W)$
and $U_{ij}'\subset X_{\overline{k(\eta_W)}}$ be the irreducible components of $\Spec \overline{k(\eta_W)}\times_{\Spec k(\eta_W)}U_i$.
We may take a finite field extension $k(\eta_W)\subset K$,
such that the generators of the ideal sheaves of $U_{ij}'$ are defined over $K$.
Then all the irreducible components of $\Spec K\times_{\Spec k(\eta_W)}U_i$ are geometrically integral.
These components dominate the generic fiber of $p_i$.
Take a variety $V$ with function field $K$ such that the map $\Spec K\rightarrow \Spec k(\eta_W)$ extends to $V\rightarrow W$.
By generic flatness, we may replace $V$ by a smaller open set and assume that each irreducible component $U_{ij}$ of $V\times_W U_i$ is flat over $V$.
Note that all $U_{ij}$ dominate $U_i$, hence also $X$.
Thus, each $U_{ij}$ is a strictly movable family of $d$-cycles of $X$ over $V$ (with coefficient $1$), 
and the cycle theoretic fiber over a general closed point of $V$ is (geometrically) integral, by \cite[Th\'eor\`eme 9.7.7]{EGAIV} again.
Then $Z_i$ is algebraically equivalent to the sum of the cycle theoretic fibers of $U_{ij}$'s, with $\mathbb{Z}$-coefficients, over a general closed point of $V$.
\end{proof}
\begin{proposition}\label{prop: movable is nef}
An irreducible, strictly movable cycle can be realized as the pushforward of a multiple of the cycle class of a nef subvariety via a proper, surjective morphism, up to algebraic equivalence.
\end{proposition}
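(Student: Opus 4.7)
The plan is to realize $Z$ as the pushforward of a general fiber of a suitably compactified family. By assumption there is an irreducible family $(g\colon U\to W)$ with $U\subset W\times_k X$ closed and reduced, $g$ proper and flat of relative dimension $d$, and $p\colon U\to X$ (the second projection) dominant, together with a coefficient $a\in\mathbb{Z}_{>0}$ and a closed point $w_0\in W$ such that $Z=a[g^{-1}(w_0)]$ under the identification $\{w_0\}\times X\cong X$. Following the argument of Proposition \ref{prop: irreducible}, I may assume the geometric generic fiber of $g$ is integral; after shrinking $W$ to a dense open, I may further assume that $W$ is smooth and quasi-projective, that $g$ is flat, and that every closed fiber is integral, and then choose $w_0$ from this locus.

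Next I would compactify. Pick a projective completion $\bar W\supset W$ and let $\bar U\subset \bar W\times_k X$ be the scheme-theoretic closure of $U$. Since $U$ is reduced and closed in the open subscheme $W\times X\subset \bar W\times X$, one has $\bar U\cap(W\times X)=U$, and $\bar U$ is an irreducible reduced projective variety. The restricted projections give $\bar g\colon\bar U\to\bar W$ (proper) and $\bar p\colon\bar U\to X$; the latter is proper because $\bar U$ is projective, and surjective because its image is closed and contains the dense set $p(U)$. Moreover, for every $w\in W$ the fiber $\bar g^{-1}(w)$ agrees with $g^{-1}(w)$ as a closed subscheme of $\bar U$.

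Set $V:=g^{-1}(w_0)\subset\bar U$; the crucial step is to verify that $V$ is a nef subvariety of $\bar U$. By construction $V$ is integral of dimension $d$, hence of codimension $\dim W$ in $\bar U$. Since $V\subset U\subset\bar U$ lies in the locus where $\bar g$ is flat and $\bar W$ is smooth, a regular system of parameters $t_1,\dots,t_{\dim W}$ of $\mathscr{O}_{\bar W,w_0}$ pulls back through $\bar g$ to a regular sequence cutting out $V$ locally; so $V$ is lci in $\bar U$, with conormal sheaf $\mathscr{I}_V/\mathscr{I}_V^{2}\cong (\bar g|_V)^{*}(\mathfrak m_{w_0}/\mathfrak m_{w_0}^{2})$, a trivial bundle of rank $\dim W$. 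Thus $\mathscr{N}_{V/\bar U}$ is trivial, in particular nef, and the remark following the definition of nef subscheme (using Proposition \ref{prop: vector bundle}) gives that $V$ is a nef subvariety of $\bar U$.

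Finally, since $V\subset \{w_0\}\times X$, the restriction $\bar p|_V$ identifies $V$ with a closed subvariety of $X$, so $\bar p_{*}[V]=[\bar p(V)]$; this equals $(1/a)Z$ by the very definition of the cycle-theoretic fiber. Multiplying by $a$ yields $Z=\bar p_{*}(a[V])$ as an equality of cycles, and \emph{a fortiori} as numerical classes. The only delicate point I anticipate is the bookkeeping in the compactification step: one must ensure that passing from $U$ to its closure $\bar U$ does not perturb the distinguished fiber over $w_0$. This is secured by the openness of $W$ in $\bar W$ together with the closedness and reducedness of $U$ in $W\times X$; beyond that the argument is essentially formal.
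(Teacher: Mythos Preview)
Your proof is correct and follows the same overall strategy as the paper: compactify the family, identify a general fiber as a nef subvariety of the total space, and push forward to $X$ along the (now proper and surjective) second projection. The difference lies in how you verify that the fiber $V=g^{-1}(w_0)$ is nef in $\bar U$. The paper observes that any closed point $w$ of the compactified base $W$ is a nef subscheme of $W$, and then invokes Proposition~\ref{prop: pullback of nef} (pullback of a nef subscheme along a morphism preserving the codimension) to conclude that $g'^{-1}(w)$ is nef in the total space. You instead shrink $W$ to its smooth locus, note that flatness of $g$ makes the pullback of a regular system of parameters at $w_0$ a regular sequence cutting out $V$, deduce that $V$ is lci with trivial normal bundle, and then appeal to the lci characterization of nefness via Proposition~\ref{prop: vector bundle}. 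Your route is more hands-on and avoids Proposition~\ref{prop: pullback of nef} entirely, at the cost of needing $W$ smooth at $w_0$; the paper's route is more conceptual and works without any smoothness assumption on the base, since a closed point is always a nef subscheme regardless of singularities. One small point worth making explicit in your write-up: when you ``choose $w_0$ from this locus'' you are tacitly replacing the original cycle by a numerically equivalent one (any two cycle-theoretic fibers over closed points of the family are algebraically equivalent), which is exactly what the clause ``up to numerical equivalence'' in the statement permits.
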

\begin{proof}
From the proof of Proposition \ref{prop: irreducible}, 
we may assume that the irreducible, strictly movable cycle given is the cycle theoretic fiber over a closed point of an irreducible, strictly movable family of $(g: U\rightarrow W)$, with the fiber of $g':\Supp U \rightarrow W$ over a general closed point of $W$ integral.
Using the argument in \cite[Remark 2.13]{FL},
we may assume that $W$ is projective.
We note that a closed point $w\in W$ is nef, hence $g'^{-1}(w)$ is also nef, by Proposition \ref{prop: pullback of nef}, and that $g'^{-1}(w)$ is integral if $w$ is general.
\end{proof}

\begin{definition}[Weakly movable cone]
Let $X$ be a projective variety over $k$.
We define the \textit{weakly movable cone}
$\overline{\WMov}_d(X)\subset\N_d(X)$
to be the closure of the convex cone generated by 
$\pi_{*}[Z]$,
where $\pi:Y\rightarrow X$ is proper, surjective morphism from a projective variety and $Z$ is a nef subvariety of dimension $d$ in $Y$.
\end{definition}

We shall compare the movable cone and the weakly movable cone.

\begin{proposition}\label{prop: mov vs wmov}
Let $X$ be a projective variety over $k$. We have
\[
\overline{\Mov}_d(X)\subseteq\overline{\WMov}_d(X).
\]
In particular,
$\overline{\WMov}_d(X)$ is a full dimensional cone in $\N_d(X)$.
\end{proposition}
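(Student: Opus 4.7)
The plan is to deduce the inclusion directly from the two preceding propositions, which have already done the heavy lifting. By definition $\overline{\Mov}_d(X)$ is the closure of the convex cone generated by strictly movable effective $d$-cycles. Proposition \ref{prop: irreducible} lets me replace the generators with irreducible strictly movable $d$-cycles. Proposition \ref{prop: movable is nef} then says that each such irreducible strictly movable cycle is, up to numerical equivalence, of the form $\pi_{*}[Z]$ where $\pi\colon Y\to X$ is a proper surjective morphism from a projective variety and $Z\subset Y$ is a nef subvariety of dimension $d$ (possibly with a positive integer coefficient). Since these are precisely the generators listed in the definition of $\overline{\WMov}_d(X)$, each generator of $\overline{\Mov}_d(X)$ sits inside $\overline{\WMov}_d(X)$, so the closed convex cone $\overline{\Mov}_d(X)$ is contained in the closed convex cone $\overline{\WMov}_d(X)$.

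The second assertion, that $\overline{\WMov}_d(X)$ is full-dimensional in $\N_d(X)$, would then follow immediately from the inclusion, provided one invokes the known fact that $\overline{\Mov}_d(X)$ is full-dimensional. I would cite \cite[Lemma 3.10]{FL} (or the analogous result there) for this, since for any ample class $h$ on $X$ the intersection products of pushforwards of complete intersection cycles of general hyperplane sections already span $\N_d(X)$, and these cycles are visibly strictly movable.

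The only place where something could go wrong is a definitional subtlety: the generators in Proposition \ref{prop: movable is nef} appear as $\pi_{*}[Z]$ of a \emph{multiple} of $[Z]$, and only up to numerical equivalence. But $\overline{\WMov}_d(X)$ is defined as a closed convex cone, so it is stable under positive scaling and contains any class numerically equivalent to one of its generators; hence this causes no issue. In short, the proof is a short bookkeeping argument chaining \ref{prop: irreducible} and \ref{prop: movable is nef} into the definition of $\overline{\WMov}_d(X)$, with no genuine obstacle to overcome.
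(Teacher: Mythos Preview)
Your argument is correct and matches the paper's own proof, which simply cites Proposition~\ref{prop: movable is nef} together with \cite[Proposition 3.8]{FL}. The only adjustment is the reference for full-dimensionality of $\overline{\Mov}_d(X)$: it is \cite[Proposition 3.8]{FL}, not Lemma~3.10.
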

\begin{proof}
The first statement follows from Proposition \ref{prop: movable is nef}, while the second statement follows from \cite[Proposition 3.8]{FL}, which says that $\overline{\Mov}_d(X)$ is full dimensional.
\end{proof}

The following proposition is an analogue of the first statement of \cite[Lemma 3.6]{FL}.
\begin{proposition}
Let $X'$ and $X$ be projective variety over $k$.
Suppose $h:X'\rightarrow X$ is a surjective morphism.
Then $h_{*}\overline{\WMov}_d(X')\subseteq\overline{\WMov}_d(X)$.
\end{proposition}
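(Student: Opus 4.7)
The plan is to verify the containment on generators first, then extend by linearity and continuity. Recall that $\overline{\WMov}_d(X')$ is by definition the closure of the convex cone generated by elements of the form $\pi_*[Z]$, where $\pi : Y \to X'$ is a proper surjective morphism from a projective variety $Y$, and $Z \subset Y$ is a nef subvariety of dimension $d$. So my first step is to fix such a generator $\pi_*[Z]$ and observe that
\[
h_*(\pi_*[Z]) = (h \circ \pi)_*[Z].
\]
Since compositions of proper surjective morphisms between projective varieties are again proper and surjective, $h \circ \pi : Y \to X$ fits into the definition of a generator of $\overline{\WMov}_d(X)$ with the same nef subvariety $Z$. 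Hence each generator of $\overline{\WMov}_d(X')$ is sent by $h_*$ to a generator of $\overline{\WMov}_d(X)$.

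Next I would extend this to the convex cone before taking closures. The map $h_* : \N_d(X')_{\mathbf{R}} \to \N_d(X)_{\mathbf{R}}$ is $\mathbf{R}$-linear, so it sends nonnegative linear combinations of generators of $\overline{\WMov}_d(X')$ to nonnegative linear combinations of generators of $\overline{\WMov}_d(X)$. This shows that the convex cone $C'$ generated by pushforwards of nef subvarieties in $X'$ maps into the analogous cone $C$ in $X$.

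Finally, to pass to the closures, I would use that $\N_d(X')_{\mathbf{R}}$ and $\N_d(X)_{\mathbf{R}}$ are finite-dimensional real vector spaces and that $h_*$ is a linear map between them, hence continuous. Continuity implies $h_*(\overline{C'}) \subseteq \overline{h_*(C')} \subseteq \overline{C}$, which is exactly the statement $h_* \overline{\WMov}_d(X') \subseteq \overline{\WMov}_d(X)$. There is no real obstacle here: the assertion follows immediately from the definition by associativity of pushforward together with the linearity and continuity of $h_*$, so the entire argument is essentially formal once one unwinds the definition of $\overline{\WMov}_d$.
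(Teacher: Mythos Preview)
Your proposal is correct and takes essentially the same approach as the paper: the paper's proof is the single sentence ``It follows from the definition of the weakly movable cone,'' and your argument simply unpacks that sentence by checking the containment on generators and then invoking linearity and continuity of $h_*$.
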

\begin{proof}
It follows from the definition of the weakly movable cone.
\end{proof}
\begin{remark}
It is unclear whether the reverse inclusion is true.
On the other hand,
Fulger and Lehmann showed that $h_{*}\overline{\Mov}_d(X')=\overline{\Mov}_d(X)$ under the same assumptions \cite[Corollary 3.12]{FL}.
\end{remark}
The following theorem is an analogue of \cite[Lemma 3.10]{FL}.
\begin{theorem}\label{theorem: weakly movable}
Let $X$ be a projective variety over $k$ and
let $\alpha \in \overline{\WMov}_d(X)$. Then
\begin{enumerate}
    \item \label{item: eff}
    If $\beta\in \overline{\Eff}^1(X)$, then $\beta\cdot \alpha\in \overline{\Eff}_{d-1}(X)$.
    \item \label{item: bignef} Let $H$ be a big Cartier divisor.
    If $H\cdot \alpha=0$, then $\alpha =0$. 
    \item \label{item: nef}If $\beta\in \Nef^{1}(X)$ then $\beta\cdot \alpha\in \overline{\WMov}_{d-1}(X)$.
    
\end{enumerate}
\end{theorem}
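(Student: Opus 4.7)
The plan is to verify (1), (3), (2) in that order: (2) will be reduced to the case $d=1$ by induction, using (3) to drop the cycle dimension. By bilinearity of the intersection pairing and closedness of the cone, for (1) and (3) it suffices to treat $\alpha=\pi_*[Z]$, where $\pi\colon Y\to X$ is a proper surjective morphism from a projective variety and $Z\subset Y$ is a nef subvariety of dimension $d$.

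For (1), pullback along surjective morphisms preserves pseudoeffectivity of divisors, so $\pi^*\beta\in\overline{\Eff}^1(Y)$. The projection formula together with the equality $\pi^*\beta\cdot[Z]=\iota_*(\iota^*\pi^*\beta)$ for $\iota\colon Z\hookrightarrow Y$, combined with Theorem~\ref{theorem: pseudoeffective}, gives $\iota^*\pi^*\beta\in\overline{\Eff}^1(Z)$, hence a pseudoeffective $(d-1)$-cycle class on $Z$. Proper pushforward along $\pi\circ\iota$ then yields $\beta\cdot\pi_*[Z]\in\overline{\Eff}_{d-1}(X)$.

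For (3), since $\pi_*$ preserves the weakly movable cone, it suffices to prove $\pi^*\beta\cdot[Z]\in\overline{\WMov}_{d-1}(Y)$; approximating $\beta$ by $\beta+\epsilon A_X$ for $A_X$ ample on $X$ and further perturbing $\pi^*(\beta+\epsilon A_X)$ by a small ample class on $Y$, closedness reduces us to the case of an ample class $L$ on $Y$, and after replacing $L$ by $mL$ for $m$ sufficiently divisible we may assume $L$ is very ample. A general $D\in|L|$ restricts to an ample Cartier divisor on $Z$, and (for $\dim Z\geq 2$) by Bertini this restriction is an irreducible reduced divisor. It is therefore an ample subscheme of $Z$, and by transitivity of nefness (Theorem~\ref{theorem: trans nef}) a nef subvariety of $Y$; its class realizes $L\cdot[Z]$, so $L\cdot[Z]\in\overline{\WMov}_{d-1}(Y)$.

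For (2), induct on $d$. The base case $d=1$ is the BDPP theorem \cite{BDPP}: $\overline{\WMov}_1(X)=\overline{\Mov}_1(X)$ is dual to $\overline{\Eff}^1(X)$, so any big $H$ lies in the interior of $\overline{\Eff}^1(X)$ and pairs strictly positively with every nonzero element of $\overline{\Mov}_1(X)$. For the inductive step, fix an ample $A$; by (3), $A\cdot\alpha\in\overline{\WMov}_{d-1}(X)$, and $H\cdot(A\cdot\alpha)=A\cdot(H\cdot\alpha)=0$, so by the inductive hypothesis $A\cdot\alpha=0$ in $\N_{d-1}(X)$. Since ample classes span $\N^1(X)_{\mathbf{R}}$, this gives $D\cdot\alpha=0$ for every divisor class $D$; iterating yields $D_1\cdots D_d\cdot\alpha=0$ for all divisors, which forces $\alpha=0$ in $\N_d(X)$. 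The step I expect to be most delicate is the Bertini/transitivity argument in (3), where one must ensure that $D\cap Z$ really does represent $L\cdot[Z]$ as a nef subvariety of $Y$ (especially in the case $d=1$ and for possibly singular $Z$); the closing of (2) further relies on the convention under which $d$-cycle classes are detected by their pairings with $d$-fold products of divisors.
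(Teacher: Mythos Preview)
Your arguments for (\ref{item: eff}) and (\ref{item: nef}) are essentially the paper's: reduce to a generator $\pi_*[Z]$, pull back, and use Theorem~\ref{theorem: pseudoeffective} for (\ref{item: eff}), Bertini plus transitivity for (\ref{item: nef}). Two minor points. First, the paper invokes Corollary~\ref{cor: intersection of ample} (intersection of nef subschemes) rather than Theorem~\ref{theorem: trans nef} directly, but the content is the same. Second, the case $d=1$ of (\ref{item: nef}) which you flag is handled in the paper simply by noting it follows from (\ref{item: eff}), since $\overline{\WMov}_0(X)=\overline{\Eff}_0(X)$; you should say this rather than leave it open.

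Your proof of (\ref{item: bignef}) takes a genuinely different route, and the final step has a gap. The paper argues directly: write $H=A+E$ with $A$ ample and $E$ effective; by (\ref{item: eff}) both $A\cdot\alpha$ and $E\cdot\alpha$ lie in $\overline{\Eff}_{d-1}(X)$, and since their sum vanishes and the pseudoeffective cone is salient \cite[Corollary~3.8]{FL1}, each is zero; then $A\cdot\alpha=0$ with $A$ ample and $\alpha$ pseudoeffective forces $\alpha=0$ \cite[Corollary~3.16]{FL1}. Your inductive scheme via (\ref{item: nef}) correctly produces $A\cdot\alpha=0$ for every ample $A$, but the closing sentence---that $D_1\cdots D_d\cdot\alpha=0$ for all divisors ``forces $\alpha=0$''---is not a convention one may adopt: products of divisor classes need not span $\N^d(X)$, so this fails for general numerical classes. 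The correct fix is available one step earlier in your own argument: since $\overline{\WMov}_d(X)\subseteq\overline{\Eff}_d(X)$, the single equation $A\cdot\alpha=0$ with $A$ ample already gives $\alpha=0$ by the same \cite[Corollary~3.16]{FL1} the paper uses. With that repair your induction is valid, though more circuitous than the paper's decomposition, and it imports Proposition~\ref{prop: weakly movable 1} (i.e.\ BDPP) into the base case where the paper does not need it.
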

\begin{proof}
For (\ref{item: eff}), we may assume that $\alpha=\pi_{*}[Z]$, where $\pi:Y\rightarrow X$ is a proper, surjective map and $Z$ a nef subvariety of $Y$.
By the projection formula, we have $\beta\cdot \pi_{*}[Z]=\pi_{*}(\pi^{*}\beta\cdot [Z])$. 
We know that $\pi^{*}\beta$ is pseudoeffective.
By Theorem \ref{theorem: pseudoeffective}, $\pi^{*}\beta\cdot [Z]\in \overline{\Eff}_{d-1}(Y)$.
Since $\pi_{*}\overline{\Eff}_{d-1}(Y)\subseteq \overline{\Eff}_{d-1}(X)$,
we have $\beta\cdot \pi_{*}[Z]\in\overline{\Eff}_{d-1}(X)$.

For (\ref{item: bignef}), we follow Fulger-Lehmann's argument \cite[Proof of Lemma 3.10]{FL}.
We write $H=A+E$, where $A$ is ample and $E$ is effective.
By (\ref{item: eff}), $A\cdot \alpha ,E\cdot \alpha\in \overline{\Eff}_{d-1}(X)$.
In particular, $H\cdot \alpha=0$ implies $A\cdot \alpha=0$ \cite[Corollary 3.8]{FL1}, which can only happen when $\alpha=0$ \cite[Corollary 3.16]{FL1}.

For (\ref{item: nef}), we may again assume $\alpha=\pi_{*}[Z]$, where $\pi:Y\rightarrow X$ is a proper, surjective map and $Z$ a nef subvariety of $Y$.
We also assume $d\geq 2$, otherwise the result already follows from (\ref{item: eff}).
Note that $\pi_{*}\overline{\WMov}_{d-1}(Y)\subseteq\overline{\WMov}_{d-1}(X)$ by the definition of weakly movable cone.
It suffices to show that $H\cdot [Z]\in \overline{\WMov}_{d-1}(Y)$, where $H$ is a very ample divisor on $Y$.
We may assume that $H\cap Z$ is of dimension $d-1$ and is integral \cite[Corollaire 6.11]{Jouanolou}.
By Corollary \ref{cor: intersection of ample}, $H\cap Z$ is a nef subvariety in $Y$.
\end{proof}

\begin{proposition}\label{prop: weakly movable 1}
Let $X$ be a projective variety of dimension $n$ over $k$.
Then
    \[
    \overline{\WMov}_{1}(X)=\overline{\Mov}_{1}(X)
    \]
\end{proposition}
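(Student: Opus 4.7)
The plan is to exploit the Boucksom--Demailly--P\u{a}un--Peternell theorem \cite{BDPP}, which identifies $\overline{\Mov}_1(X)$ with the dual cone of the pseudoeffective cone $\overline{\Eff}^1(X)$ inside $\N_1(X)$. Since Proposition \ref{prop: mov vs wmov} already gives the inclusion $\overline{\Mov}_1(X)\subseteq\overline{\WMov}_1(X)$, all that needs to be shown is the reverse inclusion. By BDPP duality, and because taking closures and convex combinations preserves the dual-cone condition, it is enough to verify that every generator of $\overline{\WMov}_1(X)$ pairs non-negatively with every pseudoeffective $\mathbf{R}$-divisor on $X$.

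So I would fix a proper surjective morphism $\pi\colon Y\to X$ from a projective variety, a nef subvariety $Z\subset Y$ of dimension $1$, and a class $D\in \overline{\Eff}^1(X)$. The projection formula rewrites
\[
D\cdot \pi_{*}[Z] \;=\; \pi_{*}(\pi^{*}D\cdot [Z]) \;=\; \deg_{Z}(\pi^{*}D|_{Z}),
\]
using the fact that proper pushforward of zero-cycles preserves degree. Next I would note that $\pi^{*}D$ remains pseudoeffective on $Y$: writing $D$ as a numerical limit of effective $\mathbf{R}$-divisors, the pullbacks are effective and converge to $\pi^{*}D$ in $\N^1(Y)_{\mathbf{R}}$. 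Then I would invoke Theorem \ref{theorem: pseudoeffective} with the nef subvariety $Z\hookrightarrow Y$, which yields $\pi^{*}D|_{Z}\in \overline{\Eff}^{1}(Z)$. Since $Z$ is a projective curve, $\overline{\Eff}^{1}(Z)$ is precisely the ray of non-negative-degree classes, and hence $D\cdot \pi_{*}[Z]\geq 0$.

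Passing to the closed convex hull over all such generators then gives $\overline{\WMov}_1(X)\subseteq\overline{\Eff}^{1}(X)^{\vee} = \overline{\Mov}_1(X)$, completing the proof. The only non-formal step is the appeal to Theorem \ref{theorem: pseudoeffective}, which itself rests on the Fujita-type vanishing results from Section 3; everything else (projection formula, continuity of pullback on Néron--Severi, BDPP duality, and the extremal-ray-to-closed-cone passage) is entirely formal. The only case needing a sanity check is the degenerate one in which $\pi|_{Z}$ is not finite onto its image, so that $\pi_{*}[Z]=0$ and the conclusion holds trivially; no substantive obstacle is expected beyond this bookkeeping.
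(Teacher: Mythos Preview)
Your proposal is correct and follows essentially the same route as the paper's own proof: both invoke BDPP duality to reduce the inclusion $\overline{\WMov}_1(X)\subseteq\overline{\Mov}_1(X)$ to checking $D\cdot\pi_*[Z]=\pi^*D\cdot[Z]\geq 0$ for pseudoeffective $D$, and then appeal to Theorem~\ref{theorem: pseudoeffective}. Your write-up simply makes explicit a few steps (pseudoeffectivity of $\pi^*D$, the degenerate case $\pi_*[Z]=0$, the one-inclusion from Proposition~\ref{prop: mov vs wmov}) that the paper leaves implicit.
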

\begin{proof}

Let $\pi:Y\rightarrow X$ be a proper and
surjective map, $Z\subset Y$ be a nef subvariety of dimension $1$.
To show that $\pi_{*}[Z]\in\overline{\Mov}_1(X)$, it suffices to show that $D\cdot \pi_{*}[Z]=\pi^{*}D\cdot [Z]\geq0$ for any pseudoeffective divisor on $X$,
since the dual cone of $\overline{\Mov}_1(X)$ is the cone of pseudoeffective divisors \cite{BDPP}.
This follows from Theorem \ref{theorem: pseudoeffective}.
\end{proof}

Let us recall Hartshorne's Conjecture A:
\begin{conjecture}[{\cite[Conjecture 4.4]{Hartshorne}}]
Let $X$ be a smooth projective variety, and $Y$ be a smooth subvariety with ample normal bundle. Then $n[Y]$ moves in a large algebraic family for $n\gg0$.
\end{conjecture}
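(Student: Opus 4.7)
The statement is recorded in the paper as a conjecture and, as the surrounding discussion makes explicit, was disproved by Fulton and Lazarsfeld; any proof strategy is therefore bound to fail. Nevertheless, the cleanest attempt within the framework developed in the excerpt proceeds as follows.

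The plan is first to translate into the cycle-class language just introduced. Since $Y$ is smooth in smooth $X$ it is l.c.i., and ampleness of $\mathscr{N}_{Y/X}$ implies nefness, so by the remark following the definition of a nef subscheme (together with Proposition \ref{prop: vector bundle}) the subscheme $Y$ is nef in $X$. Hence $[Y]\in\overline{\WMov}_d(X)$, and the conjecture is equivalent to the assertion that $[Y]\in\overline{\Mov}_d(X)$: concretely, that for $n\gg0$ some multiple $n[Y]$ appears as a cycle-theoretic fiber of a strictly movable family in the sense of Fulger--Lehmann.

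The natural tool would then be deformation theory of the $n$th infinitesimal neighborhood $Y_n$ of $Y$, combined with a vanishing theorem of Le Potier--type. First-order deformations and obstructions of $Y_n\subset X$ are controlled by $H^0$ and $H^1$ of sheaves built from $\mathscr{N}_{Y/X}$ and its symmetric powers; the hope is that ampleness of $\mathscr{N}_{Y/X}$ produces abundant sections while killing the relevant $H^1$ for $n\gg0$. One would then promote the resulting formal deformation to an actual algebraic family by Grothendieck's existence theorem and verify that the family dominates $X$, giving a ``large'' algebraic family in Hartshorne's sense.

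The main and actually fatal obstacle is that even when $H^1$ vanishes in the required range, the sections of the relevant normal sheaf need not integrate to honest algebraic deformations sweeping out $X$: they may all be trivial, or the locus swept out may be a proper subvariety of $X$. This is precisely the phenomenon exploited in the Fulton--Lazarsfeld counterexample, which from the perspective of this paper witnesses a strict containment $\overline{\Mov}_d(X)\subsetneq\overline{\WMov}_d(X)$. Consequently no purely cohomological or intersection-theoretic recipe along the lines above can close the gap, and any hypothetical proof would require a genuinely new mechanism for producing algebraic families, since ampleness of the normal bundle alone is demonstrably insufficient.
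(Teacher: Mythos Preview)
Your opening observation is correct and matches the paper exactly: the statement is recorded as Hartshorne's Conjecture~A, the paper offers no proof, and immediately after stating it the paper recalls that Fulton and Lazarsfeld disproved it. There is nothing further to compare on the level of proof.

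Two points in your commentary deserve correction, however. First, you write that the conjecture is \emph{equivalent} to the assertion $[Y]\in\overline{\Mov}_d(X)$. This is false: the conjecture asks that some multiple $n[Y]$ literally appear as a fiber of a strictly movable family, which is strictly stronger than the numerical class lying in the closure $\overline{\Mov}_d(X)$. The implication runs only one way. Second, you assert that the Fulton--Lazarsfeld counterexample ``witnesses a strict containment $\overline{\Mov}_d(X)\subsetneq\overline{\WMov}_d(X)$.'' The paper does not claim this, and in fact explicitly leaves open whether $\overline{\WMov}_d(X)=\overline{\Mov}_d(X)$ in general (see the Question following the conjecture). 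The Fulton--Lazarsfeld example shows that no multiple of the zero section moves as a cycle, but it does not settle whether its numerical class lies in the closed movable cone. Your sketch therefore overstates what is known.
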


This was disproved by Fulton and Lazarsfeld.
They constructed an ample rank $2$ vector bundle on $\mathbb{P}^2$, such that no multiples of the zero section in the total space of the vector bundle moves. 

In view of Proposition \ref{prop: mov vs wmov}, Theorem \ref{theorem: weakly movable} and Proposition \ref{prop: weakly movable 1}, it seems reasonable for us to ask the following
\begin{question}
Let $X$ be a projective variety of dimension $n$. Do we have
\[
\overline{\WMov}_d (X)=\overline{\Mov}_d(X),
\]
for $2\leq d \leq n-1$?
\end{question}

The answer is yes if and only if the cycle class of any nef subvariety of $X$ lies in the movable cone.
The key points in the question,
comparing to Hartshorne's Conjecture A,
are that we only consider the cycle classes up to numerical equivalence;
the movable cone is defined to be the \textit{closure} of the cone generated by movable cycles.
This seems to be one of the weakest possible ways of stating a conjecture that relates positivity of the normal bundle of subvarieties and their movability.

One might want to study the closure of the convex cone generated by the cycle class of nef subvarieties of dimension $d$ (in $\N_d(X)$) instead.
However, the cone may not be of full dimension, when $d=\dim X-1$ and $X$ is singular.
\begin{lemma}[{\cite[Corollary 3.4]{Ottem}}]
Let $X$ be a normal projective variety over $k$ and $Y\subset X$ be a nef subscheme of codimension $1$.
Then $Y$ is a (nef) Cartier divisor.
\end{lemma}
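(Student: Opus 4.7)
The plan is to use the equidimensionality of nef subschemes (Proposition \ref{prop: equi}) to show that the blow-up $\pi:\Bl_Y X\to X$ is in fact an isomorphism, so that $\mathscr{I}_Y$ is already invertible and $Y$ is Cartier; the nefness of $Y$ as a divisor will then follow by a routine curve-by-curve check.

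First I would apply Proposition \ref{prop: equi} with $r=1$: the restriction $\pi|_E:E\to Y$ is equidimensional of relative dimension $r-1=0$, hence quasi-finite, and since it is also proper it is finite. Because $\pi$ is an isomorphism over $X\setminus Y$, every fiber of $\pi$ is either a point or a fiber of $\pi|_E$; hence $\pi$ is quasi-finite on all of $\Bl_Y X$. Combined with properness, $\pi$ is finite and birational. Since $X$ is an integral variety and $\mathscr{I}_Y$ is a nonzero coherent ideal, the Rees algebra $\bigoplus_{n\geq 0}\mathscr{I}_Y^n$ is a graded subring of the polynomial extension of the structure sheaf, so $\Bl_Y X$ is integral. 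Normality of $X$ then forces the finite birational morphism $\pi$ to be an isomorphism (Zariski's Main Theorem), so $\mathscr{I}_Y$ is invertible, i.e., $Y$ is an effective Cartier divisor on $X$, and under $\pi$ we identify $E$ with $Y$.

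For nefness, the identification $E=Y$ gives $\mathscr{O}_Y(Y)=\mathscr{O}_E(E)$, which is nef by the definition of a codimension-one nef subscheme (here $0$-almost ample reduces to nef). Given any irreducible curve $C\subset X$: if $C\not\subset Y$ then $Y\cdot C\geq 0$ because $Y$ is effective Cartier; if $C\subset Y$ then $Y\cdot C=\deg_C(\mathscr{O}_X(Y)|_C)=\deg_C(\mathscr{O}_Y(Y)|_C)\geq 0$ by nefness of $\mathscr{O}_Y(Y)$. Thus $\mathscr{O}_X(Y)$ is nef.

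I expect the only point requiring care to be the reduction from ``$\pi|_E$ equidimensional of fiber dimension zero'' to ``$\pi$ itself is quasi-finite globally'', and the subsequent appeal to Zariski's Main Theorem that uses both normality of $X$ and integrality of $\Bl_Y X$; once $\pi$ is shown to be an isomorphism, the Cartier conclusion and the curve-by-curve nefness check are automatic.
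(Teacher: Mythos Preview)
Your proposal is correct and follows essentially the same route as the paper: use Proposition~\ref{prop: equi} to conclude $\pi|_E$ has zero-dimensional fibers, deduce $\pi$ is proper and quasi-finite hence finite, and then use normality of $X$ to conclude $\pi$ is an isomorphism. You supply more detail than the paper (the explicit passage from quasi-finiteness on $E$ to quasi-finiteness of $\pi$, integrality of $\Bl_Y X$, and the curve-by-curve nefness verification that the paper leaves implicit), but the argument is the same.
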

\begin{proof}
Let $\pi:\Bl_Y X\rightarrow X$ be the blowup of $X$ along $Y$, with exceptional divisor $E$.
Then $\pi|_E: E\rightarrow Y$ is equidimensional of relative dimension $0$, by Proposition \ref{prop: equi}.
Therefore, $\pi$ is quasi-finite.
A proper and quasi-finite morphism is finite, so $\pi$ is finite and birational, with $X$ normal.
This implies that $\pi$ is in fact an isomorphism.
\end{proof}
\begin{example}
Let $X$ be a projective variety of dimension $n$ over $k$.
By \cite[Example 19.3.3]{Fulton}, 
the natural map $\N^{1}(X)\xrightarrow{\cdot [X]}\N_{n-1}(X)$ is injective.
Fulger and Lehmann gave an example \cite[Example 2.7]{FL1} where $\N^{1}(X)\xhookrightarrow{\cdot [X]}\N_{n-1}(X)$ is not surjective.
We may assume that $X$ is normal in their example.
By the above lemma, the closure of the convex cone generated by the cycle classes of nef subschemes of codimension $1$ lies in the proper subspace $\N^{1}(X)\subsetneq\N_{n-1}(X)$,
hence is not of full dimension in $\N_{n-1}(X)$.
\end{example}
\begin{bibdiv}
\begin{biblist}
\bib{Andreotti}{article}{
   author={Andreotti, Aldo},
   author={Grauert, Hans},
   title={Th\'eor\`eme de finitude pour la cohomologie des espaces
   complexes},
   language={French},
   journal={Bull. Soc. Math. France},
   volume={90},
   date={1962},
   pages={193--259},
}
\bib{Arapura}{article}{
   author={Arapura, Donu},
   title={Partial regularity and amplitude},
   journal={Amer. J. Math.},
   volume={128},
   date={2006},
   number={4},
   pages={1025--1056},
}

\bib{Backelin}{article}{
   author={Backelin, J{\"o}rgen},
   title={On the rates of growth of the homologies of Veronese subrings},
   conference={
      title={Algebra, algebraic topology and their interactions (Stockholm,
      1983)},
   },
   book={
      series={Lecture Notes in Math.},
      volume={1183},
      publisher={Springer, Berlin},
   },
   date={1986},
   pages={79--100},
}
\bib{bor}{article}{
author = {Borcea, Ciprian},
journal = {Pacific J. Math.},
number = {1},
pages = {25--36},
publisher = {Pacific Journal of Mathematics, A Non-profit Corporation},
title = {Deforming varieties of $k$-planes of projective complete intersections.},
volume = {143},
year = {1990}
}
		
\bib{BDPP}{article}{
   author={Boucksom, S{\'e}bastien},
   author={Demailly, Jean-Pierre},
   author={P{\u{a}}un, Mihai},
   author={Peternell, Thomas},
   title={The pseudo-effective cone of a compact K\"ahler manifold and
   varieties of negative Kodaira dimension},
   journal={J. Algebraic Geom.},
   volume={22},
   date={2013},
   number={2},
   pages={201--248},
}
\bib{BOP}{article}{
    author={Broomhead, Nathan},
    author={Ottem, John Christian},
    author={Prendergast-Smith, Artie},
    title={Partially ample line bundles on toric varieties},
    journal={Glasgow Mathematical Journal},
    pages={587-598},
    volume={58},
    number={3},
    date={2016}
}
\bib{Brown}{article}{
   author={Brown, Morgan V.},
   title={Big $q$-ample line bundles},
   journal={Compos. Math.},
   volume={148},
   date={2012},
   number={3},
   pages={790--798},
}

\bib{DELV}{article}{
   author={Debarre, Olivier},
   author={Ein, Lawrence},
   author={Lazarsfeld, Robert},
   author={Voisin, Claire},
   title={Pseudoeffective and nef classes on abelian varieties},
   journal={Compos. Math.},
   volume={147},
   date={2011},
   number={6},
   pages={1793--1818},
}

\bib{dFKL}{article}{
   author={de Fernex, Tommaso},
   author={K{\"u}ronya, Alex},
   author={Lazarsfeld, Robert},
   title={Higher cohomology of divisors on a projective variety},
   journal={Math. Ann.},
   volume={337},
   date={2007},
   number={2},
   pages={443--455},
}

\bib{DPS}{article}{
   author={Demailly, Jean-Pierre},
   author={Peternell, Thomas},
   author={Schneider, Michael},
   title={Holomorphic line bundles with partially vanishing cohomology},
   conference={
      title={Proceedings of the Hirzebruch 65 Conference on Algebraic
      Geometry },
      address={Ramat Gan},
      date={1993},
   },
   book={
      series={Israel Math. Conf. Proc.},
      volume={9},
      publisher={Bar-Ilan Univ., Ramat Gan},
   },
   date={1996},
   pages={165--198},
}

\bib{ELMNP}{article}{
   author={Ein, Lawrence},
   author={Lazarsfeld, Robert},
   author={Musta{\c{t}}{\u{a}}, Mircea},
   author={Nakamaye, Michael},
   author={Popa, Mihnea},
   title={Asymptotic invariants of base loci},
   journal={Ann. Inst. Fourier (Grenoble)},
   volume={56},
   date={2006},
   number={6},
   pages={1701--1734},
}
\bib{FL}{article}{
    author={Fulger, Mihai},
    author={Lehmann, Brian},
    title={Zariski decompositions of numerical cycle classes},
   journal={J. Algebraic Geom.},
   volume={26},
   date={2017},
   pages={43--106},
   number={1}
}
\bib{FL1}{article}{
    author={Fulger, Mihai},
    author={Lehmann, Brian},
    title={Positive cones of dual cycle classes},
   journal={Algebraic Geom.},
   volume={4},
   number={1},
   pages={1--28},
   date={2017}
   
}
\bib{Fulton}{book}{
   author={Fulton, William},
   title={Intersection theory},
   edition={2},
   publisher={Springer-Verlag, Berlin},
   date={1998},
   pages={xiv+470},
}
\bib{FulLaz1}{article}{
   author={Fulton, William},
   author={Lazarsfeld, Robert},
   title={Positivity and excess intersection},
   conference={
      title={Enumerative geometry and classical algebraic geometry (Nice,
      1981)},
   },
   book={
      series={Progr. Math.},
      volume={24},
      publisher={Birkh\"auser, Boston, Mass.},
   },
   date={1982},
   pages={97--105},
}
\bib{FulLaz}{article}{
   author={Fulton, William},
   author={Lazarsfeld, Robert},
   title={Positive polynomials for ample vector bundles},
   journal={Ann. of Math. (2)},
   volume={118},
   date={1983},
   number={1},
   pages={35--60},
}

\bib{EGAIV}{article}{
   author={Grothendieck, A.},
   title={\'El\'ements de g\'eom\'etrie alg\'ebrique. IV. \'Etude locale des
   sch\'emas et des morphismes de sch\'emas. III},
   journal={Inst. Hautes \'Etudes Sci. Publ. Math.},
   number={28},
   date={1966},
   pages={255},
}
		
\bib{GK}{article}{
    author={Greb, Daniel},
    author={K\"uronya, Alex},
    title={Partial positivity: Geometry and cohomology of q-ample line bundles},
    journal={Recent Advances in Algebraic Geometry: A Volume in Honor of Rob Lazarsfeld's 60th Birthday},
    publisher={Cambridge: Cambridge University Press},
    pages={207-239}
}

\bib{Hal1}{article}{
    author={Halic, Mihai},
    title={Subvarieties with partially ample normal bundle},
    journal={Math. Z.},
    pages={to appear}
}
\bib{Hal2}{article}{
    author={Halic, Mihai},
    title={Partially ample subvarieties of projective varieties},
    eprint = {1805.06926},
    date = {2018},
}
\bib{Hartshorne}{book}{
   author={Hartshorne, Robin},
   title={Ample subvarieties of algebraic varieties},
   series={Lecture Notes in Mathematics, Vol. 156},
   note={Notes written in collaboration with C. Musili},
   publisher={Springer-Verlag, Berlin-New York},
   date={1970},
   pages={xiv+256},
}
\bib{AG}{book}{
   author={Hartshorne, Robin},
   title={Algebraic geometry},
   note={Graduate Texts in Mathematics, No. 52},
   publisher={Springer-Verlag, New York-Heidelberg},
   date={1977},
   pages={xvi+496},
}
\bib{Jouanolou}{book}{
   author={Jouanolou, Jean-Pierre},
   title={Th\'eor\`emes de Bertini et applications},
   language={French},
   series={Progress in Mathematics},
   volume={42},
   publisher={Birkh\"auser Boston, Inc., Boston, MA},
   date={1983},
   pages={ii+127},
}
	
\bib{Keeler}{article}{
   author={Keeler, Dennis S.},
   title={Ample filters and Frobenius amplitude},
   journal={J. Algebra},
   volume={323},
   date={2010},
   number={10},
   pages={3039-3053},
}


\bib{Kuronya1}{article}{
   author={K{\"u}ronya, Alex},
   title={Asymptotic cohomological functions on projective varieties},
   journal={Amer. J. Math.},
   volume={128},
   date={2006},
   number={6},
   pages={1475--1519},
}

\bib{Kuronya}{article}{
   author={K{\"u}ronya, Alex},
   title={Positivity on subvarieties and vanishing of higher cohomology},
   journal={Ann. Inst. Fourier (Grenoble)},
   volume={63},
   date={2013},
   number={5},
   pages={1717--1737},
}

\bib{Laz}{book}{
   author={Lazarsfeld, Robert},
   title={Positivity in algebraic geometry. I and II},
   publisher={Springer-Verlag, Berlin},
   date={2004},
}
\bib{Lau}{article}{
    author={Lau, Chung-Ching},
    title={Numerical Dimension and Locally Ample Curves},
    journal={Doc. Math.},
    volume={23},
    date={2018},
    pages={677-696}
}

\bib{Ottem}{article}{
   author={Ottem, John Christian},
   title={Ample subvarieties and $q$-ample divisors},
   journal={Adv. Math.},
   volume={229},
   date={2012},
   number={5},
   pages={2868--2887},
}
\bib{Ott15}{article}{
    author={Ottem, John Christian},
    eprint = {1505.01477},
    title = {Nef cycles on some hyperkahler fourfolds},
    year = {2015},
    url = {https://arxiv.org/abs/1505.01477}
}
\bib{Ottem1}{article}{
       author={Ottem, John Christian},
   title={On subvarieties with ample normal bundle},
   journal={J. Eur. Math. Soc. (JEMS)},
   volume={18},
   date={2016},
   number={11},
   pages={2459-2468},
}

\bib{Sommese}{article}{
   author={Sommese, Andrew John},
   title={Submanifolds of Abelian varieties},
   journal={Math. Ann.},
   volume={233},
   date={1978},
   number={3},
   pages={229--256},
}

\bib{Totaro}{article}{
   author={Totaro, Burt},
   title={Line bundles with partially vanishing cohomology},
   journal={J. Eur. Math. Soc. (JEMS)},
   volume={15},
   date={2013},
   number={3},
   pages={731--754},
}
\bib{Voisin}{article}{
   author={Voisin, Claire},
   title={Coniveau 2 complete intersections and effective cones},
   journal={Geom. Funct. Anal.},
   volume={19},
   date={2010},
   number={5},
   pages={1494--1513},
}
\bib{Stacks}{misc}{
author= {The {Stacks Project Authors}},
title={\itshape Stacks Project},
year={2016},
}

\end{biblist}
\end{bibdiv}


\end{document}